\numberwithin{equation}{section}
\begin{document}

\author{Sun-Sig Byun}
\author{Kyeong Bae Kim}
\author{Deepak Kumar}
\address{Sun-Sig Byun: Department of Mathematical Sciences and Research Institute of Mathematics, Seoul National University, Seoul 08826, Korea}
\email{byun@snu.ac.kr}

\address{Kyeongbae Kim: Department of Mathematical Sciences, Seoul National University, Seoul 08826, Korea}
\email{kkba6611@snu.ac.kr}

\address{Deepak Kumar: Research Institute of Mathematics, Seoul National University, Seoul 08826, Korea}
\email{deepak.kr0894@gmail.com}

\makeatletter
\@namedef{subjclassname@2020}{\textup{2020} Mathematics Subject Classification}
\makeatother

\subjclass[2020]{Primary: 35R09, 35B65}

\keywords{nonlocal equations,  global Calder\'on-Zygmund estimates, maximal functions}
\thanks{S. Byun was supported by NRF-2022R1A2C1009312, K. Kim was supported by IRTG 2235/NRF-2016K2A9A2A13003815 and
D. Kumar was supported by NRF-2021R1A4A1027378.}

\title{Global Calder\'on-Zygmund theory for fractional Laplacian type equations}

\begin{abstract}
 We establish several fine boundary regularity results of weak solutions to non-homogeneous $s$-fractional Laplacian type equations. In particular, we prove sharp Calder\'on-Zygmund type estimates of $u/d^s$ depending on the regularity assumptions on the associated kernel coefficient including VMO, Dini continuity or the H\"older continuity, where $u$ is a weak solution to such a nonlocal problem and $d$ is the distance to the boundary function of a given domain. Our analysis is based on point-wise behaviors of maximal functions of $u/d^s$.
\end{abstract}

\maketitle



\section{Introduction}
In this paper, we investigate the weak solution to the following nonlocal problem:
\begin{equation}
\label{eq: main}
\left\{
\begin{alignedat}{3}
\mathcal{L}_Au&= f&&\qquad \mbox{in  $\Omega$}, \\
u&=0&&\qquad  \mbox{in $\bbR^n\setminus \Omega$},
\end{alignedat} \right.
\end{equation}
where $\mathcal{L}_A$ is the nonlocal operator defined as
\begin{equation}\label{eq.nonl.op}
    \mathcal{L}_Au(x)=\mathrm{P.V.}\int_{\bbR^n}\frac{u(x)-u(y)}{|x-y|^{n+2s}}A(x,y) \,dy \quad x\in \bbR^n,
\end{equation}
where $n\geq 2$, $s\in(0,1)$ and the associated kernel coefficient $A=A(x,y):\bbR^n\times \bbR^n\to \bbR$ is measurable and satisfies 
\begin{equation}\label{cond.kernel}
    A(x,y)=A(y,x)\quad\text{and}\quad\Lambda^{-1}\leq A(x,y)\leq \Lambda\quad\mbox{for any $x,y\in\bbR^n$}
\end{equation}
for some constant $\Lambda\geq1$, and $\Omega$ is $C^{1,\alpha}$-domain for some $\alpha\in(0,s)$. 

In the literature, operators as in \eqref{eq.nonl.op} are known as the fractional Laplacian type nonlocal operator and they appear in ample amount of physical models, see for instance \cite{FerRos24,hitch} and references therein.  Particularly, when the coefficient $A$ is constant, \eqref{eq.nonl.op} corresponds to the well-known standard fractional Laplacian. Heuristically,  under a suitable structure assumption on the coefficient $A$, operator $\mathcal{L}u$ can be understood as a nonlocal analog of the divergence form operator of the kind $\mathrm{div}(M(x)Du)$, where $M$ satisfies the uniform ellipticity conditions (see for instance, \cite[Chapter 2]{FerRos24}). 

Taking into account the aforementioned observations, it is customary to mention fine regularity results for local problems with different kinds of coefficients. To this end, let us consider the following problem:
\begin{align}\label{eq:local-prot}
  \mathrm{div}(M(x)Du)=f,  
\end{align}
where the coefficient function $M$ is uniformly elliptic and bounded, and $f$ is a measurable function.
It is by now classical that weak solutions of \eqref{eq:local-prot} are H\"older continuous and their gradients have a slightly better integrability than the natural energy space {under some suitable integrability assumptions on $f$} but without imposing any regularity assumption on the coefficient function $M$. Some available counter examples assert that this is the best regularity one can expect in this case. Subsequently, it was observed that under small BMO assumption on $M$, the gradient of the solution exhibits higher integrability properties in the spirit of the classical Calder\'on-Zygmund theory known for the Laplacian (cf. \cite{CP,ByuWan04}). 
It is also known that even the continuity of the coefficient $M$ is not sufficient to conclude the boundedness of the gradient (see for instance, \cite{Jin-Maz-Saff}). Nevertheless, the classical theory suggests that the Dini continuity is sufficient to get such a bound on the gradient when $f$ is in the Lorentz space $L^{n,1}$. The latter condition is also optimal in this regard (see for instance, \cite{cianchi92,DuzMin10cvpde}). We also refer to \cite{CianMaz11} for global Lipschitz regularity results for the solution of the Dirichlet problem with the homogeneous boundary condition under a suitable regularity assumption on the domain.
Furthermore, concerning the H\"older continuity of the gradient of a solution $u$, we have that when $M\in C^{\alpha}$ {and $f\in L^{\frac{n}{1-\alpha},\infty}$}, then $Du\in C^{\alpha}$ (cf. \cite{KuuMin12}). 

Concerning nonlocal problems, interior regularity for linear problems is now rather well understood. In particular, for H\"older's regularity, Harnack's inequality, self-improving properties and zero-order potential estimates, without being exhaustive, we refer to \cite{Kas09,KaWe23,KuuMinSir15s,KuuMinSir15,KimLeeLee23,BehDieOkRol24}.  For higher interior regularity results obtained under additional regularity assumptions on the kernel coefficient,  we refer to \cite{Now21h,Now23,Now23i,BraLin17,BraLinSch18,MenSchYee21,Coz17,BogDuzLiaBisSer24,BogDuzLiBisSer24d,DieKimLeeNow24,CafSil11,Fal20,FerRos24s,KuuSimYan22} and references therein.

In contrast to local problems, the boundary regularity for nonlocal problems witnesses quite a peculiar behavior. Particularly, the order of interior H\"older regularity of solutions does not match that of up to the boundary irrespective of the smoothness assumption on the boundary and data of the problem (e.g., the coefficient and the right-hand side). For instance, it has been observed that the function $u(x):=(1-|x|^2)^s_{+}$ solves 
\begin{align*}
    (-\Delta)^s u = {\mathrm{const.}} \quad \text{in }B_1^+; \quad u=0 \mbox{ \, \, in }\bbR^n\setminus B_1^+,
\end{align*}
which fails to be more regular than $C^s$ up to the set $\{x\in B_1\,:\,x_n=0\}$. More precisely, in a recent article \cite{RosWei24}, a global $C^s$ regularity is established for the weak solution to \eqref{eq: main} in $C^{1,\alpha}$ domains when the kernel coefficient is merely translation invariant. Prior to this work, similar results were obtained only for kernels satisfying both the translation invariant condition and a homogeneity condition. Recently, in \cite{Gru24}, the regularity assumption on the boundary is relaxed to that of $C^{1,\mathrm{Dini}}$ to obtain such an optimal regularity up to the boundary when the right-hand side datum is bounded. In this direction, some related global regularity results concerning the H\"older continuity for solutions to nonlocal problems can be found in  \cite{ FerRos24,KorKuuPal16,IanMosSqu16,Gks-acv} and references therein.  We refer to \cite{AbdFerLeoYou23, Borth-jfa,Peral-dcds} and their references for global Calder\'on-Zygmund type theory involving the solution $u$. In a recent preprint \cite{KimWei24}, the non-translation invariant kernel case is dealt with. In particular, it is proved that when the domain is of $C^{1,\alpha}$, the kernel coefficient $A$ is in $C^{\alpha}$ in the sense of \eqref{ass2} below and $f\in L^q$, where $q>n/s$, then weak solutions are in the space $C^{s}$ up to the boundary of the domain.

In recent times, it has become quite an intriguing task to look for some higher-order boundary regularity theory involving the solutions that could improve with an improvement on the regularity of the data of the problem; that is, not as restrictive as the H\"older continuity or Sobolev regularity of solutions up to the boundary. In this direction, the regularity theory for the quotient $u/d^s$, where $d(x)\coloneqq \mathrm{dist}(x,\partial\Omega)$, has attracted much attention from researchers since the seminal works of Ros-Oton-Jerra \cite{RosSer17,Ros-duke,Ros-jde,Ros-jmpa} and Grubb \cite{Gru15,Gru18}. Heuristically, when $s\to 1^-$, the quotient $u/d^s$ can be viewed as the normal derivative of $u$ at the boundary points. In the nonlocal setting, it has been observed that the function $u/d^s$ enjoys much higher regularity up to the boundary compared to that of only $u$, if the data of the problem is smooth enough. Specifically, in \cite{Ros-jmpa}, it is established that when $f\in L^\infty$ and the domain is of $C^{1,1}$, then $u/d^s\in C^\beta$, for some small $\beta>0$. Subsequently, in \cite{Gru15}, it is proved that when the kernel coefficient is translation invariant, smooth and homogeneous, the right-hand side is also smooth and the domain is $C^\infty$, then $u/d^s$ is also smooth up to the boundary. In fact, in the same paper, regularity theory for linear pseudo-differential operators is established. Some higher H\"older continuity results of $u/d^s$ are also obtained for translation invariant and homogeneous kernels in the works \cite{Fal19,Abatg-ros}. We refer to the recent monograph \cite{FerRos24} for a detailed discussion on similar results for problems with different kinds of homogeneous or translation invariant kernel coefficients. In \cite{IanMos, IanMosSqu20}, the H\"older continuity of $u/d^s$ for nonlinear fractional p-Laplacian equations is proved. On the other hand, we refer to \cite{ChoKimRyu23,DonRyu24,Gru15,Gru18} for various kinds of global Calder\'on-Zygmund type estimates of the function $u/d^s$ for nonlocal problems with different types of kernel but not including non-translation invariant ones. Recently, in \cite{KimWei24}, for non-translation invariant and H\"older continuous kernel in the sense of \eqref{ass2}, H\"older regularity of $u/d^s$ is proved provided that the frozen kernel as in \eqref{defn.az} is homogeneous.

To the best of our knowledge, there is no regularity result available for the quotient $u/d^s$ in the spirit of Calder\'on-Zygmund for the solution of \eqref{eq: main} with possibly discontinuous coefficient and non-translational invariant $A$. Moreover, in the limiting case, to get the boundedness of $u/d^s$, the available works in the literature seem to impose the condition of H\"older continuity (in some form) or the translation invariance property on the coefficient function.

Our aim is to find suitable regularity assumptions on the associated kernel coefficient $A$ to derive sharp global Calder\'on-Zygmund type estimates for $u/d^s$ with respect to the datum $f$. To be precise, we will prove the following:
\begin{enumerate}
    \item If $A$ is VMO in $B_R(x_0)\times B_R(x_0)$, then
    \begin{equation*}
    f\in L^q(\Omega\cap B_R(x_0))\Longrightarrow u/d^s\in L^{\frac{nq}{n-qs}}(\Omega\cap B_{R/2}(x_0))
\end{equation*}
for any $q\in[2_*,n/s)$, where $2_*\coloneqq \frac{2n}{n+2s}$. 
\item If $A$ is Dini continuous in $B_R(x_0)\times B_R(x_0)$, then
\begin{align*}
    f\in L^{n/s,1}(\Omega\cap B_R(x_0))\Longrightarrow u/d^s\in{L^\infty(\Omega\cap B_{R/2}(x_0))}\quad\text{and}\quad u\in C^s(B_{R/2}(x_0)).
\end{align*}
\item If $A$ is {$\alpha$-}H\"older continuous in $B_R(x_0)\times B_R(x_0)$, then
\begin{equation*}
    f\in L^q(\Omega\cap B_R(x_0))\Longrightarrow u/d^s\in W^{\sigma,\frac{nq}{n-(s-\sigma)q}}(\Omega\cap B_{R/2}(x_0))
\end{equation*}
and
\begin{equation*}
    f\in L^{\frac{n}{s-\sigma},\infty}(\Omega\cap B_R(x_0))\Longrightarrow u/d^s\in C^{\sigma}(\Omega\cap B_{R/2}(x_0))
\end{equation*}
for any $q\in [2_*,\infty)$ and $\sigma\in(0,\min\{\alpha,s\})$. 
\end{enumerate}

Before presenting a localized version of problem \eqref{eq: main} and prescribing a notion of a weak solution to it, we first recall relevant function spaces.

Let us fix a domain $Q\subset\bbR^n$.
We first recall the fractional Sobolev space $W^{\sigma,p}(Q)$ with  $\sigma\in(0,1)$ and $1\leq p<\infty$, which is defined as
\begin{equation*}
    W^{\sigma,p}(Q)\coloneqq\{g:Q\to\bbR\,:\,\|g\|_{W^{\sigma,p}(Q)}<\infty\},
\end{equation*}
where 
\begin{equation*}
    \|g\|_{W^{\sigma,p}(Q)}\coloneqq \left(\int_{Q}\int_{Q}\frac{|g(x)-g(y)|^p}{|x-y|^{n+\sigma p}}\,dx\,dy\right)^{\frac1p}+\left(\int_Q|g|^p\,dx\right)^{\frac1p}.
\end{equation*}
We also recall the following tail space 
\begin{equation*}
   L^1_{2s}(\bbR^n)\coloneqq\left\{g:\bbR^n\to\bbR\,:\,\|g\|_{L^1_{2s}(\bbR^n)}\coloneqq\int_{\bbR^n}\frac{|g(y)|}{(1+|y|)^{n+2s}}\,dy<\infty\right\},
\end{equation*}
and for any $B_R(x_0)\subset\bbR^n$, we write 
\begin{equation*}
    \mathrm{Tail}(g;B_R(x_0))\coloneqq R^{2s}\int_{\bbR^n\setminus B_R(x_0)}\frac{|g(y)|}{|y-x_0|^{n+2s}}\,dy,
\end{equation*}
which was first introduced in \cite{DicKuuPal}.
For any measurable function $h:\bbR^n\to\bbR$, we define
\begin{equation*}
    X_{h}^{s,2}(Q,Q')\coloneqq\{g\in W^{s,2}(Q')\,:g\equiv h\quad\text{in }\bbR^n\setminus Q\},
\end{equation*}
where $Q'$ is a domain satisfying $Q\Subset Q'$.
Since we note from \cite[Lemma 2.11]{BraLinSch18} that $X_0^{s,2}(Q,Q')\subset X_{0}^{s,2}(Q,\bbR^n)$ for any $Q'\Supset Q$, we can write 
\begin{equation*}
    X_0^{s,2}(Q)\coloneqq X_{0}^{s,2}(Q,\bbR^n).
\end{equation*}

We now introduce a localized problem of \eqref{eq: main} and define its weak solutions.
\begin{definition}
    Let us fix $f\in L^{2_*}(\Omega\cap B_R(x_0))$. We say that $u\in W^{s,2}(B_R(x_0))\cap L^1_{2s}(\bbR^n)$ is a weak solution to 
    \begin{equation}
\label{eq: defn}
\left\{
\begin{alignedat}{3}
\mathcal{L}_Au&= f&&\qquad \mbox{in  $\Omega \cap B_R(x_0)$}, \\
u&=0&&\qquad  \mbox{in $B_R(x_0)\setminus \Omega$},
\end{alignedat} \right.
\end{equation}
if $u\in W^{s,2}(B)$ for some $B\Supset \Omega\cap B_R(x_0)$ and for any $\psi\in X^{s,2}_0(\Omega\cap B_R(x_0))$, there holds
\begin{align*}
    \int_{\bbR^n}\int_{\bbR^n}\frac{(u(x)-u(y))(\psi(x)-\psi(y))}{|x-y|^{n+2s}}A(x,y)\,dx\,dy=\int_{\Omega\cap B_R(x_0)}f\psi\,dx.
\end{align*}
\end{definition}
\begin{remark}
    Observe from \cite[Proposition 2.12]{BraLinSch18} that there is a unique weak solution $u\in X^{s,2}_{0}(\Omega)$ to \eqref{eq: main} when $f\in L^{2_*}(\Omega)$. Therefore, the solution $u$ to \eqref{eq: main} is also a weak solution to \eqref{eq: defn}.
\end{remark}

Since the problem \eqref{eq: defn} is scaling-invariant, it suffices {to confine our} analysis to a solution $u\in W^{s,2}(B_1)\cap L^1_{2s}(\bbR^n)$ to
\begin{equation}
\label{eq: thm}
\left\{
\begin{alignedat}{3}
\mathcal{L}_Au&= f&&\qquad \mbox{in  $\Omega\cap B_1$}, \\
u&=0&&\qquad  \mbox{in $B_1\setminus \Omega$}.
\end{alignedat} \right.
\end{equation}

We now introduce a notion of $\delta$-vanishing condition for the kernel coefficient $A$.
\begin{definition}\label{defn.del.van}
Let us fix $\delta$ and $R>0$. We say that $A$ is $(\delta,R)$-vanishing in $B\times B\subset\bbR^n\times\bbR^n$ if for any $B_r(x_0)\subset B$ with $r\in(0,R]$,
    \begin{equation*}
    \dashint_{B_r(x_0)\times B_r(x_0)}|A-(A)_{B_r(x_0)\times B_r(x_0)}|\,dx\,dy\leq \delta.
\end{equation*}
\end{definition}

We now state the main results of the paper. We first present the $L^q$ estimates of $u/d^s$ when the coefficient is possibly discontinuous.

\begin{theorem}\label{thm.vmo}
    Let $u\in W^{s,2}( B_{1})\cap L^1_{2s}(\bbR^n)$ be a weak solution to \eqref{eq: thm}. For any $q\in[2_*,n/s)$ and $\rho_0>0$, there is a constant $\delta=\delta(n,s,\Lambda,q)$ such that if $A$ is $(\delta,\rho_0)$-vanishing in $B_1\times B_1$, then we have 
\begin{equation}\label{est.thm.vmo}
    \|u/d^s\|_{L^{\frac{nq}{n-sq}}(\Omega\cap B_{1/2})}\leq c\left(\|u\|_{L^1_{2s}(\bbR^n)}+\|f\|_{L^q(\Omega\cap B_{1})}\right)
\end{equation}
for some constant $c=c(n,s,\Lambda,\alpha,q,\rho_0)$.
\end{theorem}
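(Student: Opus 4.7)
The plan is to carry out a Calder\'on--Zygmund scheme based on comparison with frozen--coefficient problems combined with pointwise maximal function estimates for $u/d^s$, following the strategy announced in the abstract. For each point $x_0 \in \overline{\Omega \cap B_{1/2}}$ and each scale $r \in (0, \rho_0]$, I would set $\bar A_{x_0,r} := (A)_{B_r(x_0) \times B_r(x_0)}$ and let $v$ solve the frozen Dirichlet problem $\mathcal{L}_{\bar A_{x_0,r}} v = 0$ in $\Omega \cap B_r(x_0)$ with $v = u$ outside this set. Testing the equation for $w := u - v$ against itself and using the $(\delta, \rho_0)$-vanishing assumption together with a fractional Hardy inequality adapted to the $C^{1,\alpha}$ geometry of $\partial\Omega$, I would obtain a $d^s$-weighted comparison estimate, schematically
\begin{equation*}
\left(\dashint_{\Omega\cap B_{r/2}(x_0)} |(u-v)/d^s|^2\,dx\right)^{1/2} \leq c\,\delta^{\gamma}\,G(u;x_0,r) + c\,r^{s}\left(\dashint_{B_r(x_0)} |f|^{2_*}\,dx\right)^{1/2_*},
\end{equation*}
for some $\gamma>0$, where $G(u;x_0,r)$ is a scale--invariant quantity controlled by averages of $u/d^s$ on concentric balls and by $\mathrm{Tail}(u;B_r(x_0))$.

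Since $\bar A_{x_0,r}$ is a positive constant, the frozen operator is both translation invariant and homogeneous, so the available boundary regularity theory for such operators (see \cite{RosWei24,FerRos24}) applies and yields $v/d^s \in C^{\beta}(\overline{\Omega\cap B_{r/2}(x_0)})$ for some $\beta>0$, together with a quantitative $L^\infty$ bound for $v/d^s$ in terms of an $L^2$ average of $u$ on $B_r(x_0)$ and $\mathrm{Tail}(u;B_r(x_0))$. Decomposing $u/d^s = (u-v)/d^s + v/d^s$ and combining these two ingredients, I would derive a pointwise inequality, for a.e.\ $x \in \Omega \cap B_{1/2}$, of the form
\begin{equation*}
\mathrm{M}^{\sharp}(u/d^s)(x) \leq c\,\delta^{\gamma}\,\mathrm{M}(u/d^s)(x) + c\,\mathrm{M}_{s}(|f|^{2_*})(x)^{1/2_*} + (\text{tail}),
\end{equation*}
where $\mathrm{M}$ and $\mathrm{M}^{\sharp}$ denote the Hardy--Littlewood and sharp maximal operators and $\mathrm{M}_{s}$ is the fractional maximal operator of order $s$. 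Taking $\delta$ small enough depending on $q$, the first right-hand term is absorbed into the left via the Fefferman--Stein $L^q$ equivalence between $\mathrm{M}$ and $\mathrm{M}^{\sharp}$, while the Adams--Hedberg bound (equivalently, Hardy--Littlewood--Sobolev) upgrades $\mathrm{M}_{s}(|f|^{2_*})^{1/2_*}$ to an $L^{\frac{nq}{n-sq}}$ bound, delivering \eqref{est.thm.vmo}.

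The principal difficulty lies in the $d^s$-weighted comparison estimate of Step~1: a direct energy inequality only produces $W^{s,2}$-type control of $u-v$, and transferring this to control of $(u-v)/d^s$ requires a fractional Hardy inequality tailored to the $C^{1,\alpha}$ boundary together with a delicate management of the tails generated by the localisation at scale $r$. The smallness in $\delta$ must survive this transfer quantitatively, with an exponent $\gamma$ independent of $r$ and $x_0$, precisely where the $(\delta,\rho_0)$-vanishing hypothesis---as opposed to mere boundedness of $A$---is decisive; an auxiliary but nontrivial point is to verify that the off-diagonal contributions of the bilinear form acting on $u-v$ still sees the smallness of $|A-\bar A_{x_0,r}|$ after summation over dyadic annuli in the tail.
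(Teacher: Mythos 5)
Your overall architecture (freeze the coefficient, compare with a constant-kernel homogeneous problem, then run a maximal-function argument) is the same as the paper's, but the decisive step is not, as you write, ``testing the equation for $w:=u-v$ against itself and using the $(\delta,\rho_0)$-vanishing assumption''. The error term created by freezing is
$\iint |u(x)-u(y)|\,|w(x)-w(y)|\,|A-\bar A_{x_0,r}|\,|x-y|^{-n-2s}\,dx\,dy$,
and the $(\delta,\rho_0)$-vanishing hypothesis only makes $|A-\bar A_{x_0,r}|$ small in integral average, while pointwise it is merely bounded by $2\Lambda$; a direct energy estimate therefore sees no smallness whatsoever. To convert $L^1$-smallness of $A-\bar A_{x_0,r}$ into smallness of this bilinear term one needs a reverse-H\"older/self-improving property of the comparison solution up to the boundary (the paper proves exactly this Gehring-type estimate in Lemma \ref{lem.self} and then extracts the factor $\delta^{\delta_0/(1+\delta_0)}$ by H\"older's inequality in Lemma \ref{comp2.conti}, after first comparing with the same-kernel homogeneous problem and only then freezing, which is also what keeps the tail terms manageable after the initial localization of Lemma \ref{lem.localization}). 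This ingredient is absent from your sketch; without it the claimed exponent $\gamma$ in your weighted comparison estimate cannot be produced, and this, rather than the dyadic-annuli bookkeeping you flag, is the heart of the matter.

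Your harmonic-analysis endgame also has two problems. First, the single-scale inequality $M^{\sharp}(u/d^s)(x)\leq c\,\delta^{\gamma}M(u/d^s)(x)+\dots$ is not what the comparison yields: at the comparison scale the oscillation of $v/d^s$ is of unit size (its $L^\infty$ norm is comparable to averages of $u/d^s$ plus tails), so smallness only appears after descending to a strictly smaller scale $\theta r$ and invoking the $C^{\beta}$ decay of $v/d^s$, giving a factor $\theta^{\beta}+\theta^{-n}\delta^{\gamma}$; this is repairable but must be built into the statement. Second, and more seriously, absorbing $c\,\epsilon\,\|M(u/d^s)\|_{L^{nq/(n-sq)}}$ into the left-hand side via Fefferman--Stein presupposes that this norm is finite, which is precisely what the theorem asserts; a priori $u/d^s$ is only known to lie in $L^{2}$ (fractional Hardy), so the absorption is not legitimate without a truncation or approximation scheme. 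The paper's good-$\lambda$/Vitali covering argument (Lemma \ref{lem.disc.measd}, Lemma \ref{lem.vit.sec4}, and the finite level-set sums in Lemma \ref{lem.vmo}) is designed precisely to avoid this a priori integrability issue; if you keep the sharp-maximal route you must add, for instance, truncated maximal functions or a level-by-level iteration to justify the absorption.
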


\begin{remark}\label{rmk.sharp.vmo}
  We assert that the estimates of Theorem \ref{thm.vmo} are sharp when we only require $A$ to satisfy a $(\delta,R)$-vanishing condition. Indeed, in the Appendix \ref{appen}, for any $\epsilon>0$, we construct a weak solution $u$ to \eqref{eq: defn} with $A\equiv 1$ and $f\in L^q(\Omega\cap B_1)$ such that $u/d^s\notin L^{\frac{nq}{n-sq}+\epsilon}(\Omega\cap B_{1/2})$.  We also point out that in the forthcoming Section \ref{sec5.3}, we prove that the estimate \eqref{est.thm.vmo} holds when $A$ satisfies a slightly different assumption from Definition \ref{defn.del.van}. In particular, such an assumption holds for a translation invariance-type kernel coefficient.
\end{remark}

\begin{remark}
  In \cite{AbeGru23}, it is shown that  $u/d^s\in H^{s,q}\subset L^{\frac{nq}{n-sq}}$ when $A\equiv 1$ or $A$ is translation invariant and homogeneous, and $\Omega$ is a $C^{1,\tau}$-domain, where $\tau>2s$. Thus, concerning a higher-integrability result of $u/d^s$, Theorem \ref{thm.vmo} relaxes the regularity assumption on the kernel coefficient $A$ by allowing it to have some discontinuity and the domain to be $C^{1,\alpha}$-regular for any $\alpha>0$.
\end{remark}

We introduce notions of Dini and H\"older continuity of the kernel coefficient.
\begin{definition}\label{defn.con}
    We say that $A$ is Dini continuous in $B_R(x_0)\times B_R(x_0)$ if 
    there is a non-decreasing function ${{\omega}}:\bbR_+\to\bbR_+$ with ${{\omega}}(0)=0$ such that
    \begin{equation*}
        |A(x_1,y_1)-A(x_2,y_2)|\leq {{\omega}}(\max\{|x_1-x_2|,|y_1-y_2|\})\quad\text{if }x_1,x_2,y_1,y_2\in B_R(x_0)
    \end{equation*}
 and 
\begin{equation}
    \int_{0}^{R}{{{\omega}}(\rho)}\frac{\,d\rho}{\rho} < \infty.
\end{equation}
In particular, if $\omega(\rho)\leq c\rho^\alpha$ for some constant $c\geq1$, then we say that $A$ is H\"older continuous of order $\alpha$ in $B_R(x_0)$.
\end{definition}

We now provide a boundedness result of $u/d^s$ when $A$ is Dini continuous.
\begin{theorem}\label{thm.dini}
    Let $u\in W^{s,2}(\Omega\cap B_{1})\cap L^1_{2s}(\bbR^n)$ be a weak solution to \eqref{eq: thm} with $A$ being Dini continuous in $B_1\times B_1$. If $f\in L^{n/s,1}(\Omega\cap B_{1})$, then we have 
    \begin{align*}
        \|u/d^s\|_{L^\infty(\Omega\cap B_{1/2})}\leq c\left(\|u\|_{L^1_{2s}(\bbR^n)}+\|f\|_{L^{n/s,1}(\Omega\cap B_{1})}\right),
    \end{align*}
    where $c=c(n,s,\Lambda,\alpha,{{\omega}})$. 
\end{theorem}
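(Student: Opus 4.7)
The strategy is to bound $u/d^s$ pointwise at every boundary‑adjacent point by running a Campanato‑type excess‑decay scheme at boundary points, in which the Dini modulus of $A$ plays the role of a perturbative weight. Fix $x_0\in\Omega\cap B_{1/2}$, let $z_0\in\partial\Omega$ be a nearest boundary point, and consider the dyadic chain of balls $B_k\coloneqq B_{r_k}(z_0)$ with $r_k=2^{-k}r_0$ for a sufficiently small $r_0=r_0(n,s,\Lambda,\alpha,\omega)$. At each scale I would freeze the kernel by replacing $A(x,y)$ with its average $A_k\coloneqq (A)_{B_k\times B_k}$ and let $v_k$ solve the frozen Dirichlet problem $\mathcal{L}_{A_k}v_k=0$ in $\Omega\cap B_k$ with $v_k=u$ in $\bbR^n\setminus(\Omega\cap B_k)$. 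For this frozen problem I would invoke (a rescaled version of) the boundary H\"older regularity of $v_k/d^s$ that the paper develops in the H\"older‑kernel case, producing a decay of the natural boundary excess of $v_k/d^s$ at a geometric rate $\lambda\in(0,1)$ that is independent of the scale.

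The difference $w_k\coloneqq u-v_k$ then solves a nonlocal equation on $\Omega\cap B_k$ whose right‑hand side splits into $f$ together with a perturbative term generated by $(A-A_k)(u(x)-u(y))/|x-y|^{n+2s}$. The heart of the argument is to estimate $w_k/d^s$ uniformly on $\Omega\cap B_k$ by a quantity of the form
\begin{equation*}
  C\,\omega(r_k)\,\bigl(\|u/d^s\|_{L^\infty(\Omega\cap B_{k-1})}+\|u\|_{L^1_{2s}(\bbR^n)}\bigr)+C\,r_k^{s}\,\|f\|_{L^{n/s,1}(\Omega\cap B_k)},
\end{equation*}
where the Lorentz norm $L^{n/s,1}$ is precisely the class for which the dyadic sum $\sum_{k}r_k^{s}\|f\|_{L^{n/s,1}(\Omega\cap B_k)}$ is controlled by $\|f\|_{L^{n/s,1}(\Omega\cap B_1)}$ via the standard dyadic characterization of Lorentz spaces. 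The tail contribution from $\bbR^n\setminus B_k$ is absorbed into $\|u\|_{L^1_{2s}(\bbR^n)}$ by exploiting $\mathrm{Tail}$‑type bounds combined with the fact that $u$ already sits in $W^{s,2}(B_1)$.

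Combining the decay for $v_k/d^s$ with the comparison estimate yields a recursion of the form $E_{k+1}\le\lambda E_k+C\,\omega(r_k)\,D+C\,r_k^{s}\,\|f\|_{L^{n/s,1}(\Omega\cap B_k)}$ for a suitable boundary excess $E_k$ of $u/d^s$ on $B_k$, with $D$ collecting the data $\|u\|_{L^1_{2s}(\bbR^n)}+\|f\|_{L^{n/s,1}(\Omega\cap B_1)}$. The Dini hypothesis $\int_0^R\omega(\rho)\,d\rho/\rho<\infty$ forces $\sum_k\omega(r_k)<\infty$, so iterating and summing produces a Cauchy sequence of scale‑$r_k$ averages of $u/d^s$; this gives a pointwise bound at each Lebesgue point, and a covering argument converts this into the asserted $L^\infty$ bound on $\Omega\cap B_{1/2}$. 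The main obstacle I anticipate is the comparison step: the modulus $\omega(r_k)$ must be extracted sharply, without spurious scale losses, which forces one to measure $w_k$ against the boundary profile $d^s$ rather than through purely interior energy estimates. I expect this to require a boundary barrier/maximum‑principle argument, using the frozen problem's boundary H\"older estimate as a black box, together with careful control of the nonlocal tail terms through the $L^1_{2s}$ norm of $u$.
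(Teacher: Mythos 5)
Your overall architecture (freeze the kernel on dyadic balls at a boundary point, use boundary regularity of the frozen problem for excess decay, sum the errors with the Dini condition, and let the Lorentz space handle the borderline data term) is indeed the skeleton of the paper's argument, but the comparison step as you state it is a genuine gap, in two respects. First, you claim a \emph{uniform} bound on $(u-v_k)/d^s$ of size $C\,\omega(r_k)\bigl(\|u/d^s\|_{L^\infty(\Omega\cap B_{k-1})}+\|u\|_{L^1_{2s}(\bbR^n)}\bigr)+C r_k^s\|f\|_{L^{n/s,1}}$. Energy comparison plus the fractional Hardy inequality only yields $L^2$/$L^1$-averaged control of the difference quotient (this is exactly Lemmas \ref{comp1.conti}--\ref{comp2.conti} combined with Lemma \ref{lem.sobpoi.ds}), not an $L^\infty$ bound; moreover the right-hand side presupposes finiteness of $\|u/d^s\|_{L^\infty}$, which is the quantity being proven, so the recursion must be run on finite averaged excess functionals. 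More seriously, if you freeze $A$ to $(A)_{B_k\times B_k}$ \emph{globally}, the commutator $A-A_k$ is only of size $\omega(r_k)$ inside $B_k\times B_k$; on the far field it is merely bounded, and the resulting error is of order one times tail quantities of $u$ at scale $r_k$ — it does not carry the factor $\omega(r_k)$, so summing it over scales against $\|u\|_{L^1_{2s}}$ diverges. This is precisely why the paper freezes only inside $B_{2r}\times B_{2r}$ (keeping $A$ outside), first localizes so that $|A-A_z|\leq{\pmb\omega}(\max\{|x-z|,|y-z|\})$ holds globally, and then must carry along the global oscillation quantity ${\pmb\omega}_G$ and the weighted tail $\mathrm{Tail}(u(\cdot){\pmb\omega}(|\cdot-z|);B_r(z))$, whose dyadic resummation (the Fubini manipulations in Lemma \ref{lem.dini}) is where the Dini hypothesis is used a second time. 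Your scheme has no counterpart of these terms, and without them the iteration does not close.

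Second, the data term is mis-scaled. The scale-invariant boundary estimate for the Dirichlet problem with datum $f$ on $\Omega\cap B_{r_k}$ reads $\|w/d^s\|_{L^\infty}\lesssim\|f\|_{L^{n/s,1}(\Omega\cap B_{r_k})}$ \emph{without} the factor $r_k^s$ (rescaling $x\mapsto r_kx$ turns $f$ into $r_k^{2s}f(r_k\cdot)$ and $d^s$ into $r_k^s\tilde d^s$, and $\|r_k^{2s}f(r_k\cdot)\|_{L^{n/s,1}(B_1)}=r_k^{s}\|f\|_{L^{n/s,1}(B_{r_k})}$). With the spurious $r_k^s$ your series converges trivially (and would not even require the Lorentz refinement), but the estimate is false at that strength; without it, $\sum_k\|f\|_{L^{n/s,1}(\Omega\cap B_{r_k})}$ does not converge, so the "dyadic characterization of Lorentz spaces" cannot rescue the sum. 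The correct mechanism, and the one the paper uses, is to keep a sub-borderline average of $f$ at each scale, namely $\bigl(\dashint_{\Omega\cap B_{r_k}}(r_k^s|f|)^{2_*}\,dx\bigr)^{1/2_*}$, whose sum over scales is the truncated Wolff potential $\mathbf{W}^{|f|^{2_*}\mbox{\Large$\chi$}_{\Omega}}_{\frac{2_*s}{2_*+1},2_*+1}$, and then to invoke the potential-theoretic bound \eqref{ineq2.wolff} giving $\sup_z\mathbf{W}^{|f|^{2_*}\mbox{\Large$\chi$}_{\Omega}}_{\frac{2_*s}{2_*+1},2_*+1}(z,\cdot)\lesssim\|f\|_{L^{n/s,1}}$; this, together with the maximal-function bound of Lemma \ref{lem.dini.max} and the bootstrap through Theorem \ref{thm.vmo} to control the $u$-dependent terms created by localization, is what actually produces the $L^\infty$ bound on $u/d^s$.
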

From this, we deduce a global $C^s$-regularity.
\begin{corollary}\label{cor.cs}
    Let $u\in W^{s,2}(\Omega\cap B_{1})\cap L^1_{2s}(\bbR^n)$ be a weak solution to \eqref{eq: thm} with $A$ being Dini continuous in $B_1\times B_1$. If $f\in L^{n/s,1}(\Omega\cap B_{1})$, then we have 
    \begin{align*}
        [u]_{C^s( B_{1/2})}\leq c\left(\|u\|_{L^1_{2s}(\bbR^n)}+\|f\|_{L^{n/s,1}(\Omega\cap B_{1})}\right),
    \end{align*}
    where $c=c(n,s,\Lambda,\alpha,{{\omega}})$. 
\end{corollary}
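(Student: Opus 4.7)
The plan is to deduce the $C^s$ estimate directly from the $L^\infty$ bound of $u/d^s$ provided by Theorem~\ref{thm.dini}, combined with an interior $C^s$-regularity estimate (which, in this Dini-continuous setting with $f\in L^{n/s,1}$, should be available as the interior counterpart of Theorem~\ref{thm.dini}; heuristically it plays the role of the standard interior $C^{2s-\varepsilon}$ estimate for the fractional Laplacian with bounded data). Set
\[
M \coloneqq \|u\|_{L^1_{2s}(\bbR^n)}+\|f\|_{L^{n/s,1}(\Omega\cap B_{1})}.
\]
By Theorem~\ref{thm.dini} we have $|u(x)|\leq cM\, d(x)^s$ for $x\in\Omega\cap B_{1/2}$, and of course $u\equiv 0$ in $B_{1/2}\setminus\Omega$.

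Fix $x,y\in B_{1/2}$ with, say, $d(x)\leq d(y)$. I would split into a boundary regime and an interior regime through the standard dichotomy $|x-y|\gtrless d(x)/4$. In the \textbf{boundary regime} $|x-y|\geq d(x)/4$, the triangle inequality gives $d(y)\leq d(x)+|x-y|\leq 5|x-y|$, hence
\[
|u(x)-u(y)|\leq cM\bigl(d(x)^s+d(y)^s\bigr)\leq cM\,|x-y|^s,
\]
(and if one of $x,y$ lies outside $\Omega$, the same conclusion follows at once from $u\equiv 0$ outside $\Omega$ since then $d$ of the interior point is bounded by $|x-y|$).

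In the \textbf{interior regime} $|x-y|<d(x)/4$, set $\rho\coloneqq d(x)/2$ so that $B_\rho(x)\Subset\Omega\cap B_1$ and $y\in B_{\rho/2}(x)$. I would rescale by defining $\tilde u(z)\coloneqq\rho^{-s}u(x+\rho z)$ on $B_1$; then $\tilde u$ solves a nonlocal equation of the same form with kernel $\tilde A(z,w)=A(x+\rho z,x+\rho w)$, which inherits Dini continuity with modulus $\tilde\omega(\tau)=\omega(\rho\tau)$, and with right-hand side $\tilde f(z)=\rho^s f(x+\rho z)$, whose $L^{n/s,1}$-norm is scale-invariant. The $L^\infty$ bound on $u/d^s$ and $d(x+\rho z)\leq 3\rho$ yield $\|\tilde u\|_{L^\infty(B_1)}\leq cM$, and the tail is similarly controlled by $M$. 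Applying the interior $C^s$ estimate to $\tilde u$ and scaling back gives $[u]_{C^s(B_{\rho/2}(x))}\leq cM$, hence $|u(x)-u(y)|\leq cM|x-y|^s$.

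The \emph{main obstacle} is to secure the interior $C^s$ estimate with $L^{n/s,1}$ right-hand side under the Dini continuity of $A$ on the rescaled kernel, with constants depending only on the given data and not on $\rho$; this requires that the modulus $\tilde\omega(\tau)=\omega(\rho\tau)$ be handled uniformly (its Dini integral is bounded by that of $\omega$), and that the tail contribution coming from the rescaling be absorbed into $M$ through the $L^1_{2s}$-norm. Modulo this interior ingredient---presumably proved in the paper as a by-product of the arguments leading to Theorem~\ref{thm.dini}, since away from the boundary $d\sim 1$---the combination of the two regimes immediately yields $[u]_{C^s(B_{1/2})}\leq cM$, which is the asserted estimate.
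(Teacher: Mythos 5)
Your two-regime dichotomy is exactly the paper's proof of Corollary \ref{cor.cs}: the regime $|x-y|\gtrsim\max\{d(x),d(y)\}$ is handled, as you do, by the pointwise bound $|u|\le cM\,d^s$ coming from Theorem \ref{thm.dini}, and the regime $|x-y|\ll d$ by an interior estimate on a ball $B_{d}(x)\subset\Omega$. The one genuine gap is the piece you flag yourself and then defer: the interior $C^s$ estimate with $L^{n/s,1}$ data is \emph{not} a by-product of the arguments leading to Theorem \ref{thm.dini} (that machinery only produces the $L^\infty$ bound of $u/d^s$ and, in the interior, an excess decay for $u/\oldphi$ at rate tied to $\alpha$, not a clean $C^s$ bound for $u$). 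The paper instead imports it from the literature, citing \cite[Theorem 1.7]{DieNow23}, which needs only that $A$ be VMO (implied by Dini continuity) and $f\in L^{n/s,\infty}$ locally; without this citation or an equivalent argument your interior step is unsupported.

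Beyond that, the part you compress into ``the tail is similarly controlled by $M$'' is where the paper's Case (b) actually spends its effort: after invoking the interior theorem one must show $\|u\|_{L^\infty(B_{2d(x)/3}(x))}+\mathrm{Tail}(u;B_{2d(x)/3}(x))\le c\,d(x)^s\bigl(\|u/d^s\|_{L^\infty(\Omega\cap B_{1/8}(x))}+\|u\|_{L^1_{2s}(\bbR^n)}\bigr)$, which is done by splitting the tail over dyadic annuli via \cite[Lemma 2.2]{DieKimLeeNow24n}, using $|u|\le cM\,d^s$ on annuli up to scale $\sim 1/8$ (so the $L^\infty$ bound of $u/d^s$ is needed on a ball slightly larger than $B_{1/2}$, obtained by a covering/translation of Theorem \ref{thm.dini}) and the $L^1_{2s}$-norm for the far part; your naive rescaling bound of the tail by $\rho^{2s}\|u\|_{L^1_{2s}}$ alone would not suffice for the intermediate annuli. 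With the citation and this bookkeeping supplied, your argument coincides with the paper's.
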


\begin{remark}\label{rmk.cs1}
We would like to mention that our estimates given in Theorem \ref{thm.dini} and Corollary \ref{cor.cs} are true when $A$ is Dini continuous in a weaker sense as in \eqref{ass1}. Indeed, we employ a well-known result that a weak solution $v$ to \eqref{eq: thm} with a constant kernel coefficient and $f=0$ satisfies $v/\oldphi\in C^{\epsilon}$ for some $\epsilon>0$ and some barrier function $\oldphi$ which is comparable to $d^s$, in order to obtain decay estimates as in Lemma \ref{lem.decay}. However, such a H\"older regularity is also true for a weak solution $v$ to \eqref{eq: thm} with a translation invariant kernel instead of constant kernels (see \cite[Theorem 1.6]{RosWei24}). 
    Thus, we obtain Theorem \ref{thm.dini} and Corollary \ref{cor.cs} which also hold for translation invariant kernel coefficients (see Section \ref{sec5.3} for details).
\end{remark}

\begin{remark}
Global $C^s$-regularity for solution to \eqref{eq: thm} is known only when $A$ satisfies \eqref{ass2} below, and $f\in L^{n/s+\epsilon}(\Omega\cap B_1)$ for any $\epsilon>0$. However, our result as in Corollary \ref{cor.cs} allows us to weaken the regularity assumptions on both the kernel coefficient and the right-hand side. 
\end{remark}

\begin{theorem}\label{thm.hol.1}
 Let $u\in W^{s,2}(\Omega\cap B_{1})\cap L^1_{2s}(\bbR^n)$ be a weak solution to \eqref{eq: thm} with $A$ being H\"older continuous of order $\alpha$ in $B_1\times B_1$.
 Let us also assume that, there is a constant $K\geq1$ such that 
 \begin{equation}\label{ass.thm.hol}
     [A]_{C^{\alpha}(B_1\times B_1)}\leq K.
 \end{equation}
 If $\sigma\in(0,\alpha)$ and $f\in L^q(\Omega\cap B_1)$  with $q\in[2_*,n/(s-\sigma))$, then 
    \begin{equation}\label{est.thm.hol.1}
        \|u/d^s\|_{ W^{\sigma,\frac{nq}{n-(s-\sigma)q}}(\Omega\cap B_{1/2})}\leq c\left(\|u\|_{L^1_{2s}(\bbR^n)}+\|f\|_{L^q(\Omega\cap B_1)}\right),
    \end{equation}
    where $c=c(n,s,\Lambda,\alpha,q,\sigma,K)$.
\end{theorem}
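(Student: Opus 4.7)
My plan is to establish the theorem via a Campanato-type boundary oscillation decay for $u/d^s$, combined with the standard sharp-fractional-maximal-function characterization of $W^{\sigma,p}$ and the Hardy--Littlewood--Sobolev inequality for the Riesz potential of order $s-\sigma$. This is the natural nonlocal analogue of the classical approach used for local divergence-form equations with H\"older coefficients, and it is consistent with the paper's declared strategy based on pointwise behavior of maximal functions of $u/d^s$.

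Fix $x_0\in\overline{\Omega}\cap B_{1/2}$ and a small radius $r>0$, let $\tilde A$ denote the translation-invariant kernel obtained by freezing $A$ at $(x_0,x_0)$, and let $v$ be the weak solution of $\mathcal{L}_{\tilde A}v=0$ in $\Omega\cap B_r(x_0)$ with $v=u$ on $\bbR^n\setminus(\Omega\cap B_r(x_0))$. Since $\tilde A$ is translation invariant, Corollary \ref{cor.cs} together with Remark \ref{rmk.cs1} applied at the rescaled level yields the boundary $C^{\sigma}$-regularity
\begin{equation*}
    [v/d^s]_{C^{\sigma}(\Omega\cap B_{r/2}(x_0))}\le c\,r^{-\sigma}\bigl(\|v/d^s\|_{L^{\infty}(\Omega\cap B_r(x_0))}+\mathrm{Tail}(v;B_r(x_0))\bigr)
\end{equation*}
for every $\sigma\in(0,\min\{\alpha,s\})$. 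The remainder $w=u-v$ satisfies $\mathcal{L}_{\tilde A}w=f+(\mathcal{L}_{\tilde A}-\mathcal{L}_A)u$ in $\Omega\cap B_r(x_0)$ with zero data in $B_r(x_0)\setminus\Omega$, and the H\"older bound \eqref{ass.thm.hol} gives $|A-\tilde A|\le Kr^{\alpha}$ on $B_r(x_0)\times B_r(x_0)$. Applying Theorem \ref{thm.vmo} to $w$ (the frozen kernel being trivially VMO) and splitting the commutator into a local piece with gain $r^\alpha$ plus a nonlocal tail piece should produce, for a fixed auxiliary exponent $p_\ast\in[2_\ast,n/s)$, an estimate of the form
\begin{equation*}
    \Bigl(\dashint_{\Omega\cap B_r(x_0)}|w/d^s|^{p_\ast}\Bigr)^{\!1/p_\ast}\le c\,r^{\alpha}\bigl(\|u/d^s\|_{L^{p_\ast}(\Omega\cap B_r(x_0))}+\mathrm{tails}\bigr)+c\,r^{s}\Bigl(\dashint_{B_r(x_0)}|f|^{q}\Bigr)^{\!1/q}.
\end{equation*}

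Combining these two estimates at a sub-scale $\theta r$, and using $\alpha>\sigma$ for absorption in a standard dyadic Morrey--Campanato iteration, produces the pointwise control
\begin{equation*}
    \sup_{0<r\le r_0}\,r^{-\sigma}\inf_{\eta\in\bbR}\Bigl(\dashint_{\Omega\cap B_r(x_0)}|u/d^s-\eta|^{p_\ast}\Bigr)^{\!1/p_\ast}\le c\,\mathbf I_{s-\sigma}(|f|)(x_0)+c\bigl(\|u\|_{L^1_{2s}(\bbR^n)}+\|f\|_{L^q(\Omega\cap B_1)}\bigr),
\end{equation*}
where $\mathbf I_{s-\sigma}$ denotes the Riesz potential of order $s-\sigma$. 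By Hardy--Littlewood--Sobolev, $\mathbf I_{s-\sigma}$ maps $L^q$ into $L^{nq/(n-(s-\sigma)q)}$ whenever $q<n/(s-\sigma)$, and the DeVore--Sharpley type characterization of $W^{\sigma,p}$ through the sharp fractional $(p_\ast,\sigma)$-maximal function converts the displayed pointwise bound into the norm estimate \eqref{est.thm.hol.1}. The main obstacle is the first step: proving the boundary $C^{\sigma}$-regularity of $v/d^s$ with the correct scaling in $r$ and with dependence only on $\|v/d^s\|_{L^\infty}$ and a tail (so that it interacts cleanly with the $L^{p_\ast}$-bound for $w$ given by Theorem \ref{thm.vmo}), together with the careful accounting of the nonlocal tails across dyadic scales, so that the Riesz potential of $f$, rather than a coarser maximal-function object, actually appears on the right-hand side of the iteration.
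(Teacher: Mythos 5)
Your overall blueprint (freeze the kernel, compare with the frozen-kernel solution, run an excess-decay/Campanato iteration, bound the result by potentials/maximal functions of $f$, then convert to a Sobolev norm) is the same family of argument as the paper, which does this with the barrier $\oldphi\sim d^s$, the excess functional $E(u;B_r)$, the decay Lemma \ref{lem.decay}, the pointwise estimate of the nonlocal sharp maximal function in Lemma \ref{lem.hol}, and the embedding Lemma \ref{lem.max}. But as written your proposal has two genuine gaps. First, the final conversion step is not valid at the exponent you use: an $L^p$ bound on the fractional sharp maximal function of order \emph{exactly} $\sigma$ does not characterize $W^{\sigma,p}$; the DeVore--Sharpley/embedding mechanism (Lemma \ref{lem.max}) requires a strict smoothness gap $t<\beta$ and trades it for integrability. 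This is precisely why the paper proves the sharp maximal estimate at the intermediate order $\widetilde\sigma=(\sigma+\alpha)/2>\sigma$, bounds it in $L^{\frac{nq}{n-(s-\widetilde\sigma)q}}$ (using the maximal function $M^{\Omega}_{2_*(s-\widetilde\sigma)}(|f|^{2_*})^{1/2_*}$ and Wolff potentials, not the Riesz potential $\mathbf I_{s-\sigma}(|f|)$, which is smaller than what the $2_*$-averages of $f$ from the comparison actually produce), and only then applies Lemma \ref{lem.max} with $\beta=\widetilde\sigma$, $t=\sigma$ to land exactly in $W^{\sigma,\frac{nq}{n-(s-\sigma)q}}$. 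Your iteration at order $\sigma$ with $\mathbf I_{s-\sigma}(|f|)$ would at best give a Nikolskii-type ($B^{\sigma}_{p,\infty}$) bound, not \eqref{est.thm.hol.1}; since $\sigma<\alpha$ there is room to repair this, but the idea of working at an intermediate exponent is missing.

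Second, the comparison step ``apply Theorem \ref{thm.vmo} to $w=u-v$ with right-hand side $f+(\mathcal{L}_{\tilde A}-\mathcal{L}_A)u$'' does not work: the commutator is not an $L^q$ function (it is a nonlocal operator acting on $u$, with long-range contributions), so it cannot be fed into a Calder\'on--Zygmund statement, and the claimed $L^{p_*}$-average bound for $w/d^s$ with gain $r^{\alpha}$ does not follow this way. The paper instead runs energy ($W^{s,2}$) comparison estimates (Lemmas \ref{comp1.conti} and \ref{comp2.conti}), where the local commutator term is absorbed using the self-improving/higher-integrability Lemma \ref{lem.self} and the nonlocal part is tracked through tails weighted by $\omega(|\cdot|)$ and the quantity ${\pmb\omega}_G$; the passage from the energy bound to the $L^1$-excess of $u/d^s$ uses the Hardy-type inequality Lemma \ref{lem.sobpoi.ds} and the bounds \eqref{rel.phid1} on $\oldphi$. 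Relatedly, the boundary $C^{\sigma}$ decay for the frozen-kernel solution is not Corollary \ref{cor.cs}/Remark \ref{rmk.cs1} (those concern Dini-continuous $A$ and give $u\in C^s$, $u/d^s\in L^\infty$); the correct ingredient is the excess decay \eqref{hig.fir.ineq} for translation-invariant kernels (from the results of Kim--Weidner/Ros-Oton--Weidner used in Lemma \ref{lem.comp.conti}), together with the $C^\alpha$ regularity \eqref{hol.phid1} of $\oldphi/d^s$, and Theorem \ref{thm.vmo} enters only to provide the a priori integrability \eqref{ineq2.thm.hol.1} of $u$ needed for the tail and maximal-function terms. You correctly flagged the decay estimate and the tail bookkeeping as the main obstacles, but the two points above are where the proposal, as stated, would fail.
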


\begin{remark}
As in Remark \ref{rmk.sharp.vmo}, we also observe that the estimates of Theorem \ref{thm.hol.1} are sharp. Since if the estimates \eqref{est.thm.hol.1} were true when the space $W^{\sigma,\frac{nq}{n-(s-\sigma)q}}$ is replaced by either $W^{\sigma+\epsilon,\frac{nq}{n-(s-\sigma)q}}$ or $u/d^s\in W^{\sigma,\frac{nq}{n-(s-\sigma)q}+\epsilon}$ for some $\epsilon>0$, then by the Sobolev embedding, the estimates \eqref{est.thm.vmo} would hold with $L^{\frac{nq}{n-sq}}$ replaced by $L^{\frac{nq}{n-sq}+\varsigma}$ for some small $\varsigma>0$. This is not possible by the sharpness result of Theorem \ref{thm.vmo}. Therefore, the estimates \eqref{est.thm.hol.1} are also sharp.
\end{remark}

\begin{theorem}\label{thm.hol.2}
 Let $u\in W^{s,2}(\Omega\cap B_{1})\cap L^1_{2s}(\bbR^n)$ be a weak solution to \eqref{eq: thm}, where $A$ is H\"older continuous of order $\alpha$ in $B_1\times B_1$ and satisfies \eqref{ass.thm.hol} for some constant $K\geq1$. For any $\sigma\in (0,\alpha)$, if $f\in L^{n/(s-\sigma),\infty}(\Omega\cap B_1)$, then 
    \begin{equation*}
        [u/d^s]_{ C^{\sigma}(\overline{\Omega}\cap B_{1/2})}\leq c\left(\|u\|_{L^1_{2s}(\bbR^n)}+\|f\|_{L^{n/(s-\sigma),\infty}(\Omega\cap B_1)}\right),
    \end{equation*}
      where $c=c(n,s,\Lambda,\alpha,\sigma,K)$.
\end{theorem}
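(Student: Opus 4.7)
The plan is to reduce Theorem~\ref{thm.hol.2} to a Campanato-type oscillation decay for $u/d^s$ and then derive that decay by a frozen-coefficient comparison scheme analogous to the one underlying Theorem~\ref{thm.hol.1}. Concretely, for every $x_0\in\overline{\Omega}\cap B_{1/2}$ I would seek a constant $\kappa(x_0)$ such that
\begin{equation*}
\dashint_{\Omega\cap B_r(x_0)}\Bigl|\frac{u}{d^s}-\kappa(x_0)\Bigr|\,dx\leq c\,r^{\sigma}\bigl(\|u\|_{L^1_{2s}(\bbR^n)}+\|f\|_{L^{n/(s-\sigma),\infty}(\Omega\cap B_1)}\bigr)
\end{equation*}
for all $r\in(0,r_0)$, with $c$ and $r_0$ independent of $x_0$. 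Since by Theorem~\ref{thm.dini} (cf.\ Remark~\ref{rmk.cs1}) the quotient $u/d^s$ is bounded and extends continuously to $\overline{\Omega}\cap B_{1/2}$, the standard Campanato characterization then promotes this to the desired $C^{\sigma}$ estimate.

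For the boundary decay at $x_0\in\partial\Omega\cap B_{1/2}$, the plan is to freeze the coefficient at the diagonal point and compare. At each dyadic scale $r$, let $v$ solve $\mathcal{L}_{A(x_0,x_0)}v=0$ in $\Omega\cap B_r(x_0)$ with $v=u$ on $\bbR^n\setminus(\Omega\cap B_r(x_0))$. The barrier-based Hölder regularity for the frozen operator (Lemma \ref{lem.decay} in the spirit discussed in Remark~\ref{rmk.cs1}) provides an exponent $\varepsilon_0>\sigma$ and a constant $\kappa_r$ with
\begin{equation*}
\dashint_{\Omega\cap B_{\rho}(x_0)}\Bigl|\frac{v}{d^s}-\kappa_r\Bigr|\,dx\leq c\Bigl(\frac{\rho}{r}\Bigr)^{\varepsilon_0}\Bigl(\|u/d^s\|_{L^\infty(\Omega\cap B_r(x_0))}+\mathrm{Tail}(u;B_r(x_0))\Bigr),
\end{equation*}
while the difference $w:=u-v$ solves an equation with right-hand side $f+(\mathcal{L}_{A(x_0,x_0)}-\mathcal{L}_A)u$. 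The Hölder bound \eqref{ass.thm.hol} converts the commutator into an $r^{\alpha}$-loss, which remains strictly smaller than $r^{\sigma}$ since $\sigma<\alpha$. Iterating the comparison along a geometric sequence of radii and summing the resulting geometric series in $(\rho/r)^{\varepsilon_0-\sigma}$ yields the desired $r^{\sigma}$ decay for the homogeneous part.

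The contribution of $f$ enters through a fractional maximal function of order $s-\sigma$. The excess generated at scale $r$ by solving an inhomogeneous frozen problem with datum $f\mathbf{1}_{\Omega\cap B_r(x_0)}$ is, via duality with the barrier for the frozen operator, controlled by $r^{\sigma}$ times the $(s-\sigma)$-order maximal function of $f$ at $x_0$, plus a Riesz-type tail. The assumption $f\in L^{n/(s-\sigma),\infty}$ is precisely the sharp Lorentz endpoint under which this maximal function belongs to $L^{\infty}$ with norm bounded by $\|f\|_{L^{n/(s-\sigma),\infty}(\Omega\cap B_1)}$; this is where the maximal-function framework developed in the preceding sections of the paper is applied. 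Interior points $x_0\in\Omega\cap B_{1/2}$ with $d(x_0)\geq 4r$ are handled by the same comparison scheme without barrier, using $d\asymp d(x_0)$ on $B_r(x_0)$ to reduce the statement to interior $C^{s+\sigma}$ regularity for the frozen problem.

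The main obstacle I anticipate lies in aligning three competing exponents: the barrier-induced decay $\varepsilon_0$, the Hölder exponent $\alpha$ of $A$, and the target $\sigma$. The iteration requires $\varepsilon_0>\sigma$, but the barrier result only gives some possibly small $\varepsilon_0>0$ at first; a bootstrap on $\sigma$, running the argument first for small $\sigma$ and then re-feeding the improved regularity of $u/d^s$ into the decay estimate, should push $\varepsilon_0$ arbitrarily close to $\alpha$ and cover the full range $\sigma\in(0,\alpha)$. A second delicate point is the simultaneous telescoping of the nonlocal tails of $u$ and the Riesz-type tail of $f$ along the dyadic iteration, so that all error contributions are absorbed into the final linear combination of $\|u\|_{L^1_{2s}(\bbR^n)}$ and $\|f\|_{L^{n/(s-\sigma),\infty}(\Omega\cap B_1)}$; this is where the pointwise maximal-function estimates already established in the paper must be invoked in order to keep the dependence on $f$ linear in its Lorentz norm.
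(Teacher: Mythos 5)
Your proposal is essentially the paper's own route: a frozen-coefficient comparison with the barrier function yielding the excess decay of Lemmas \ref{lem.comp.conti} and \ref{lem.decay}, iterated into a pointwise bound on the fractional sharp maximal function $M^{\#,\Omega}_{\sigma,2^{-6}}(u/d^s)$ (which is exactly your Campanato-type decay), with the datum $f$ controlled through the $(s-\sigma)$-order fractional maximal function and Wolff potential bounded in $L^\infty$ at the Lorentz endpoint via \eqref{ineq2.wolff} and \eqref{ineq2.pre.fmax}, the $L^\infty$ bound on $u/d^s$ supplied by Theorem \ref{thm.dini}, and the Campanato embedding \cite[Theorem 2.9]{Giu03} to conclude. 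One remark: the bootstrap you anticipate is unnecessary (and by itself would not raise the frozen-problem decay exponent, which is a property of the frozen operator rather than of $u$), since \eqref{hig.fir.ineq} already provides the decay rate $\alpha>\sigma$ for the homogeneous frozen problem in the $C^{1,\alpha}$ domain.
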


\begin{remark}
 We remark that in  Section \ref{sec5.3}, we obtain the estimates given in Theorem \ref{thm.hol.1} and Theorem \ref{thm.hol.2} are true under the assumption that $A$ satisfies slightly different continuity condition as in \eqref{ass2} below and the frozen kernel $A_z$ which is defined in \eqref{defn.az} below is homogeneous, see Section \ref{sec5.3} for more details.
\end{remark}

\begin{remark}
A few comments are in order concerning the regularity assumption on the right-hand side term $f$ in Theorem \ref{thm.hol.2} as compared to the available results. Let us fix $q\in(\frac{n}{s},\frac{n}{s-\alpha})$ with $\sigma\coloneqq s-n/q$. We note from \cite{KimWei24} that
    \begin{equation*}
        f\in L^{q}(\Omega\cap B_1)\Longrightarrow u/d^s\in C^{s-n/q}(\overline{\Omega}\cap B_{1/2}).
    \end{equation*}
    However, our result implies
    \begin{equation*}
        f\in L^{q,\infty}(\Omega\cap B_1)\Longrightarrow u/d^s\in C^{s-n/q}(\overline{\Omega}\cap B_{1/2}).
    \end{equation*}
    Since
    \begin{equation*}
         L^q(\Omega\cap B_1)\subsetneq L^{q,\infty}(\Omega\cap B_1)\subsetneq L^{q-\epsilon}(\Omega\cap B_1)
    \end{equation*}
    for any $\epsilon>0$, Theorem \ref{thm.hol.2} can be seen as a borderline result.
\end{remark}

Let us give an overview of our approach to get the desired results. Indeed, we start with a localization argument which allows us to assume $u\in W^{s,2}(\bbR^n)$ and $u\equiv 0$ in $\bbR^n\setminus B_2$. In addition, if $A$ is continuous in $B_1\times B_1$, we can always assume $A$ is globally continuous by such an argument.

For the case when $A$ is VMO, we employ an approximation method as in \cite{CP}, and Vitali's type covering argument, in order to derive 
\begin{equation*}
    \|M^{\Omega}_{2^{-6}}(u/d^s)\|_{L^{\frac{nq}{n-sq}}}\lesssim \|u\|_{L^1_{2s}(\bbR^n)}+\|M^{\Omega}_{2_*s,2^{-3}}(|f|^{2_*})\|_{L^{\frac{nq}{2_*(n-qs)}}}^{\frac{1}{2_*}}.
\end{equation*}

For the case when the associated kernel coefficient is continuous, we establish a suitable excess decay estimate as below:
\begin{align*}
    E(u;B_{\rho r})&\lesssim \rho^\alpha E(u;B_r)+\rho^{-n}({\pmb{\omega}}(r)+{\pmb{\omega}}_{G}(r))(E(u;B_r)+(u/d^s)_{B_r})\\
    &\quad+\rho^{-n}\max\{d(0),r\}^{-s}\mathrm{Tail}(u(\cdot){\pmb{\omega}}(|\cdot|);B_r)+ \rho^{-n}\left(\dashint_{\Omega\cap B_{r}}(r^s|f|)^{2_*}\,dx\right)^{\frac1{2_*}},
\end{align*}
where 
\begin{align*}
    {E}(u;B_{\rho})&\coloneqq \dashint_{B_{\rho}}|u/\oldphi-(u/\oldphi)_{B_{\rho}}|\,dx+\max\{\oldphi(0),\rho^s\}^{-1}\mathrm{Tail}(u-(u/\oldphi)_{B_{\rho}}\oldphi;B_{\rho})
\end{align*}
and the function $\oldphi$ is a barrier function that is comparable to the distance function $d^s$ (see Section \ref{sec5} for more details about this barrier) and we have written ${\pmb{\omega}}$ and ${\pmb{\omega}}_G$ as a local and a global oscillation-type quantity of the coefficient $A$ (see \eqref{defn.omega.sec5} below), respectively. We would like to mention that this excess functional was first introduced by \cite{KimWei24}. In light of the localization argument, we can handle the term ${\pmb{\omega}}_G(\cdot)$ as in the local term ${\pmb{\omega}}(\cdot)$. Therefore, based on the approach given in \cite{DuzMinjfa} and splitting the tail term appropriately together with suitable use of a weight function ${\pmb{\omega}}(|\cdot|)$, we finally establish 
\begin{align}\label{tech.pt}
    M^{\Omega}_{2^{-6}}(u/d^s)(0)\lesssim \|u\|_{L^1_{2s}(\bbR^n)}+W^{|f|^{2_*}\mbox{\Large$\chi$}_{\Omega}}_{\frac{2_*s}{2_*s+1},2_*+1}(0,2^{-2}),
\end{align}
if $A$ is Dini continuous.

Based on arguments given in \cite{KuuMin12} together with \eqref{tech.pt}, we now obtain a more sharp estimate 
\begin{align*}
    M^{\#,\Omega}_{\sigma,2^{-6}}(u/d^s)(0)\lesssim \|u\|_{L^1_{2s}(\bbR^n)}+W^{|f|^{2_*}\mbox{\Large$\chi$}_{\Omega}}_{\frac{2_*s}{2_*s+1},2_*+1}(0,2^{-2})+\left({M}^{\Omega}_{2_*(s-\sigma),2^{-2}}(|f|^{2_*})(0)\right)^{\frac1{2_*}}
\end{align*}
if $A$ is H\"older continuous with order $\alpha$, where $\sigma\in (0,\alpha)$.
We point out that in the proof, we indeed use a nonlocal fractional sharp maximal function which was first introduced by \cite{DieNow23} (see \eqref{defn.glosharp} for its precise definition).

The rest of the article has the following organization. In Section 2, we introduce some notations and provide embedding results. The localization argument and some well-known regularity results of solutions to fractional Laplacian type equations are also mentioned in this section. In Section 3, we provide comparison estimates. In Section 4, we prove Theorem \ref{thm.dini}. In Section 5, we prove Theorem \ref{thm.dini}, Corollary \ref{cor.cs}, Theorem \ref{thm.hol.1} and Theorem \ref{thm.hol.2}. In the appendix, we provide examples to show the sharpness of our estimates.

\section{Preliminaries}
Let us denote by $c$ to mean a universal constant which is bigger or equal to 1.  In addition, a parenthesis is used to denote the relevant dependencies of the constant $c$, e.g., $c=c(n,s)$, if the constant $c$ depends only on $n$ and $s$.

We first observe that there are constants $\rho_0=\rho_0(\Omega)\in(0,1]$ and $c_0=c_0(\Omega)\in(0,1]$ such that for any $\rho\in(0,\rho_0]$,
\begin{equation*}
    \inf_{y_0\in\partial\Omega}\frac{|B_\rho(y_0)\setminus\Omega|}{|B_\rho|}\geq c_0\quad\text{and}\quad \inf_{x_0\in\overline{\Omega}}\frac{|B_\rho(x_0)\cap\Omega|}{|B_\rho|}\geq c_0,
\end{equation*}
as $\Omega$ is a $C^{1,\alpha}$-domain. Therefore, we deduce that for any $\rho\in(0,1]$, there hold
\begin{equation}\label{rmk.bdry}
 \inf_{y_0\in\partial\Omega}\frac{|B_\rho(y_0)\setminus\Omega|}{|B_\rho|}\geq c_1\quad\text{and}\quad \inf_{x_0\in\overline{\Omega}}\frac{|B_\rho(x_0)\cap\Omega|}{|B_\rho|}\geq c_1,
\end{equation}
where $c_1=c_0\rho_0^n$ depends only on $\Omega$.

For convenience in writing, we abbreviate
\begin{equation*}
    \mathsf{data}=\mathsf{data}(n,s,\Lambda,\alpha,\Omega).
\end{equation*}
We now recall the truncated Wolff potential of a function $g\in L^1(B_{2R}(x_0))$ as
\begin{equation*}
    \mathbf{W}^{|g|}_{\beta,p}(z,r)\coloneqq\int_{0}^{r}\left(\frac{|g|(B_{\rho}(z))}{\rho^{n-\beta p}}\right)^{\frac1{p-1}}\frac{\,d\rho}{\rho}\quad\text{for any }r\in(0,R] \quad\text{and}\quad z\in B_R(x_0),
\end{equation*}
where $p\geq1$ and $\beta>0$.
By \cite{Cia11}, we have 
\begin{align}\label{ineq1.wolff}
    \| \mathbf{W}^{|g|}_{\beta,p}(z,R)\|_{L^{\frac{nq(p-1)}{n-\beta pq}}(B_{R/2}(x_0))}\leq c\|g\|_{L^q(B_{2R}(x_0))}
\end{align}
for some constant $c=c(n,q,p,\beta,R)$ whenever $\beta p<q$. In addition, we also observe from \cite{Cia11} that 
\begin{align}\label{ineq2.wolff}
    \| \mathbf{W}^{|g|}_{\beta,p}(z,R)\|_{L^{\infty}(B_{R/2}(x_0))}\leq c\|g\|_{L^{\frac{n}{\beta p},\frac1{(p-1)}}(B_{2R}(x_0))}
\end{align}
for some constant $c=c(n,p,\beta)$, whenever $\beta p<n$. We refer to \cite{SteWei71} for more details about the Lorentz spaces.

In what follows, when we consider the set $\Omega\cap B_R(x_0)$, we always assume $x_0\in \overline{\Omega}$.

\noindent
For any $g\in L^1(\Omega\cap B_{2R}(x_0))$ with $R\leq 1$ and $\beta\in[0,n)$, we define a fractional maximal function by
\begin{equation*}
    M^{\Omega}_{\beta,R}(g)(z)\coloneqq\sup_{0<\rho<R}\rho^{\beta}\dashint_{\Omega\cap B_\rho(z)}|g|\,dx\quad\text{for any }z\in \overline{\Omega}\cap B_R(x_0).
\end{equation*}
In particular, we write $M^{\Omega}_R(g)(z)\coloneqq M^{\Omega}_{0,R}(g)(z)$.
By \eqref{rmk.bdry}, we have for any $z\in\overline{\Omega}\cap B_R(x_0)$,
\begin{equation*}
     M^{\Omega}_{\beta,R}(g)(z)\leq c\sup_{0<\rho<R}\rho^\beta\dashint_{B_\rho(z)}|g\mbox{\Large$\chi$}_{\Omega}|\,dx\leq cM_{\beta,R}(g\mbox{\Large$\chi$}_{\Omega})(z)
\end{equation*}
for some constant $c=c(n,\Omega)$, where $M_{\beta,R}(g)$ is the standard truncated fractional maximal function. Therefore, using this and \cite[Theorem 3.1]{KinSak03}, we have
\begin{equation}\label{ineq1.pre.fmax}
\begin{aligned}
     \|M^{\Omega}_{\beta,R}(g)(z)\|_{L^{\frac{nq}{n-q\beta}}(\Omega\cap B_R(x_0))}\leq c\|M_{\beta,R}(g\mbox{\Large$\chi$}_{\Omega})(z)\|_{L^{\frac{nq}{n-q\beta}}(\Omega\cap B_R(x_0))} &\leq c\|g\mbox{\Large$\chi$}_{\Omega}\|_{L^q(B_{2R}(x_0))}\\
     &\leq c\|g\|_{L^q(\Omega\cap B_{2R}(x_0))}
    \end{aligned}
\end{equation}
for some constant $c=c(n,\beta,R,\Omega)$. In addition, from \cite[Proposition 2.5]{DieNow23}, we get
\begin{equation}\label{ineq2.pre.fmax}
\begin{aligned}
    \|M^{\Omega}_{\beta,R}(g)(z)\|_{L^{\infty}(\Omega\cap B_R(x_0))}\leq \|M_{\beta,R}(g\mbox{\Large$\chi$}_{\Omega})(z)\|_{L^{\infty}(\Omega\cap B_R(x_0))}&\leq c\|g\mbox{\Large$\chi$}_{\Omega}\|_{L^{n/\beta,\infty}(B_{2R}(x_0))}\\
    &\leq c\|g\|_{L^{n/\beta,\infty}(\Omega\cap B_{2R}(x_0))}
\end{aligned}
\end{equation}
for some constant $c=c(n,\beta,R,\Omega)$. We refer to \cite{SteWei71} for more details about the Marcinkiewicz spaces.
We next introduce a fractional sharp maximal function as
\begin{equation*}
    M^{{\#,\Omega}}_{\beta,R}(g)(z)\coloneqq\sup_{0<\rho\leq R}\rho^{-\beta}\dashint_{\Omega\cap B_\rho(z)}|g-(g)_{\Omega\cap B_\rho(z)}|\,dx\quad\text{for any }z\in \overline{\Omega}\cap B_R(x_0),
\end{equation*}
where $g\in L^1(\Omega\cap B_{2R}(x_0))$ with $R\leq1$ and $\beta\in[0,1]$.

We first assert the following inequality.
\begin{lemma}\label{lem.sharp}
Let $g\in L^1(\Omega\cap B_{2R}(x_0))$ with $R\leq 1$. Then for any $\beta\in(0,1]$ and all $x,y\in \overline{\Omega}\cap B_{R/4}(x_0)$, we have
    \begin{equation*}
        |g(x)-g(y)|\leq c\left(M^{{\#},\Omega}_{\beta,R}(g)(x)+M^{{\#},\Omega}_{\beta,R}(g)(y)\right)|x-y|^\beta
    \end{equation*}
    for some constant $c=c(n,\beta,\Omega)$.
\end{lemma}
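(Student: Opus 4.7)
The plan is to use the standard Campanato-type dyadic chaining argument that characterizes H\"older continuity via sharp maximal functions, with the mild adjustment that all averages are taken over $\Omega\cap B_\rho(z)$ rather than over full balls $B_\rho(z)$. The key geometric input allowing us to pass freely between these two averages is the measure-density estimate \eqref{rmk.bdry}, which bounds $|\Omega\cap B_\rho(z)|$ from below by $c_1|B_\rho|$ uniformly for $z\in\overline\Omega$ and $\rho\in(0,1]$. This will let us absorb the factors $|B_\rho|/|\Omega\cap B_\rho(z)|$ into the final constant and will be the sole reason the constant depends on $\Omega$.

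First I would fix $x,y\in\overline\Omega\cap B_{R/4}(x_0)$, restrict to $x,y$ being Lebesgue points of $g$ (the extension to the rest being by density since the right-hand side will be pointwise finite a.e.), set $r\coloneqq|x-y|\leq R/2$, and telescope the dyadic sequence of averages $a_i\coloneqq(g)_{\Omega\cap B_{2^{-i}r}(x)}$. Using the inclusion $\Omega\cap B_{2^{-i-1}r}(x)\subset \Omega\cap B_{2^{-i}r}(x)$ together with \eqref{rmk.bdry} to replace $|\Omega\cap B_{2^{-i-1}r}(x)|$ by a constant multiple of $|B_{2^{-i}r}|$, each successive difference $|a_{i+1}-a_i|$ is bounded by $c\,M^{\#,\Omega}_{\beta,R}(g)(x)\,(2^{-i}r)^\beta$; summing the resulting geometric series in $\beta>0$ and invoking Lebesgue differentiation yields
\[
|g(x)-(g)_{\Omega\cap B_r(x)}|\leq c\,M^{\#,\Omega}_{\beta,R}(g)(x)\,r^\beta,
\]
and symmetrically with $y$ in place of $x$. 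Note that at every step the radius used lies in $(0,R]$, so the sup defining $M^{\#,\Omega}_{\beta,R}$ is applicable.

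To join the two endpoints I would bridge through $(g)_{\Omega\cap B_{2r}(y)}$, noting that both $B_r(x)$ and $B_r(y)$ are contained in $B_{2r}(y)$ and that $2r\leq R$. One more application of the measure-density estimate \eqref{rmk.bdry} then bounds $|(g)_{\Omega\cap B_r(x)}-(g)_{\Omega\cap B_{2r}(y)}|$ and $|(g)_{\Omega\cap B_r(y)}-(g)_{\Omega\cap B_{2r}(y)}|$ by $c\,M^{\#,\Omega}_{\beta,R}(g)(y)\,r^\beta$ (the bridging introduces an extra factor $|B_{2r}|/|\Omega\cap B_r(\cdot)|$, which is uniformly bounded by $2^n/c_1$). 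A triangle inequality then delivers the desired estimate.

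I do not expect any real obstacle: this is a routine chaining argument, entirely independent of the nonlocal operator $\mathcal{L}_A$, the coefficient $A$, and the datum $f$. The only point requiring care is the uniform bookkeeping of the constant $c_1=c_1(\Omega)$ from \eqref{rmk.bdry} each time one passes from an integral over $\Omega\cap B_\rho$ to one against $|B_\rho|$; this is precisely what produces the stated dependence $c=c(n,\beta,\Omega)$.
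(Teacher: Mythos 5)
Your proposal is correct and follows essentially the same route as the paper: a dyadic telescoping of the averages $(g)_{\Omega\cap B_{2^{-i}\rho}(x)}$ controlled by $M^{\#,\Omega}_{\beta,R}(g)$ via the measure-density bound \eqref{rmk.bdry}, followed by bridging the two points through a comparable ball and a triangle inequality. The only cosmetic difference is that the paper bridges via $(g)_{\Omega\cap B_{2\rho}(x)}$ and $(g)_{\Omega\cap B_{\rho}(y)}$ using $\Omega\cap B_{\rho}(y)\subset\Omega\cap B_{2\rho}(x)$, while you bridge through $(g)_{\Omega\cap B_{2r}(y)}$; this changes nothing of substance.
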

\begin{proof}
    We will follow the same approach as in \cite[Proposition 1]{KuuMin14g}. Let us denote $\rho\coloneqq |x-y|\leq R/2$. Then we have 
    \begin{equation}\label{rel.yx}
        \Omega\cap B_{\rho}(y)\subset \Omega\cap B_{2\rho}(x).
    \end{equation}
 By \eqref{rmk.bdry},  we observe that
    \begin{align*}
        |(g)_{\Omega\cap B_{\rho_i}(x)}-(g)_{\Omega\cap B_{\rho_{i+1}}(x)}|\leq \dashint_{\Omega\cap B_{\rho_{i+1}}(x)}|g-(g)_{\Omega\cap B_{\rho_{i}}(x)}|\,d\tilde{x} &\leq c\dashint_{\Omega\cap B_{\rho_{i}}(x)}|g-(g)_{\Omega\cap B_{\rho_{i}}(x)}|\,d\tilde{x} \nonumber\\
        &\leq c(2^{-i}\rho)^{\beta}M^{{\#},\Omega}_{\beta,R}(g)(x)
    \end{align*}
    for some constant $c=c(n,\Omega)$, where $\rho_i=2^{-i}\rho$. Thus we have
    \begin{align*}
        |(g)_{\Omega \cap B_{\rho_i}(x)}-(g)_{\Omega \cap B_{\rho}(x)}|\leq c\rho^{\beta}M^{{\#},\Omega}_{\beta,R}(g)(x)
    \end{align*}
    for some constant $c=c(n,\beta)$, which is independent of $i$. Using this and the fact that $\Omega\cap B_{\rho_i}(x)=B_{\rho_i}(x)$ if $i$ is sufficiently large, we have 
    \begin{align}\label{ineq1.lem.sharp}
        |g(x)-(g)_{\Omega \cap B_{2\rho}(x)}|\leq c|x-y|^{\beta}M^{{\#},\Omega}_{\beta,R}(g)(x)
    \end{align}
    for some constant $c=c(n,\beta,\Omega)$. Similarly, we have 
    \begin{align}\label{ineq2.lem.sharp}
        |g(y)-(g)_{\Omega \cap B_{\rho}(y)}|\leq c|x-y|^{\beta}M^{{\#},\Omega}_{\beta,R}(g)(y).
    \end{align}
    In addition, we get 
    \begin{align*}
        |(g)_{\Omega \cap B_{2\rho}(x)}-(g)_{\Omega \cap B_{\rho}(y)}|\leq \dashint_{\Omega\cap B_{\rho}(y)}|g-(g)_{\Omega\cap B_{2\rho}(x)}|\leq \dashint_{\Omega\cap B_{2\rho}(x)}|g-(g)_{\Omega\cap B_{2\rho}(x)}|,
    \end{align*}
    where we have used \eqref{rel.yx} and \eqref{rmk.bdry} for the last inequality.
    Therefore, we obtain
    \begin{align}\label{ineq3.lem.sharp}
        |g(x)-g(y)|&\leq  |g(x)-(g)_{\Omega \cap B_{2\rho}(x)}|+|g(y)-(g)_{\Omega \cap B_{\rho}(y)}|+|(g)_{\Omega \cap B_{2\rho}(x)}-(g)_{\Omega \cap B_{\rho}(y)}|.
    \end{align}
    Combining all the estimates \eqref{ineq1.lem.sharp}, \eqref{ineq2.lem.sharp} and \eqref{ineq3.lem.sharp}, we get the desired result of the lemma.
\end{proof}

Using this, we have the following fractional Sobolev embedding via the sharp maximal function.
\begin{lemma}\label{lem.max}
    Let $\beta\in (0,1)$ and $p>1$. If $t\in (0,\beta)$ and $g\equiv0$ on $B_{R}(x_0)\setminus \Omega$ with $R\leq1$, then we have 
    \begin{align*}
        \|g\|_{W^{t,q}(\Omega\cap B_{R/4}(x_0))}\leq c\|M^{\#,\Omega}_{\beta,R/4}(g)\|_{L^p(\Omega\cap B_{R/2}(x_0))}+c\|g\|_{L^p(\Omega\cap B_{R/2}(x_0))},
    \end{align*}
    where $q\coloneqq \frac{np}{n-(\beta-t)p}$ and $c=c(n,\beta,t,p,\Omega,R)$.
\end{lemma}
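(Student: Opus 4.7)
The plan is to derive the estimate from the pointwise Hajlasz-type inequality produced by Lemma \ref{lem.sharp}. Setting $F\coloneqq M^{\#,\Omega}_{\beta,R/4}(g)$, that lemma yields
\[|g(x)-g(y)|\le c\,|x-y|^{\beta}\bigl(F(x)+F(y)\bigr)\qquad\text{for all }x,y\in\overline{\Omega}\cap B_{R/4}(x_0),\]
which is the defining property of the Hajlasz fractional Sobolev class. Together with the hypothesis $g\equiv 0$ in $B_R\setminus\Omega$ and the two-sided measure density \eqref{rmk.bdry}, the pair $(g,F)$ extends naturally to the ambient ball $B_{R/2}(x_0)$, where I would then run a Sobolev-type improvement.

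My first step would be to control the $L^q$ norm of $g$. For $x\in\Omega\cap B_{R/4}(x_0)$ close to $\partial\Omega$, the density estimate \eqref{rmk.bdry} together with the vanishing of $g$ outside $\Omega$ furnishes a point $z$ in a ball of radius $\sim d(x,\partial\Omega)$ around $x$ at which $g=0$; applying the pointwise inequality to the pair $(x,z)$ gives $|g(x)|\lesssim d(x,\partial\Omega)^{\beta}F(x)$. For points away from $\partial\Omega$ I would instead run the classical Hedberg argument, balancing the oscillation $\rho^{\beta}F(x)$ against the $L^{p}$-average $\rho^{-n/p}\|g\|_{L^{p}}$ over a ball of radius $\rho$ to be optimized, and use boundedness of the Hardy--Littlewood maximal operator on $L^{p}$ for $p>1$. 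Together this produces
\[\|g\|_{L^{p^{\ast}}(\Omega\cap B_{R/4}(x_0))}\le c\bigl(\|F\|_{L^{p}(\Omega\cap B_{R/2}(x_0))}+\|g\|_{L^{p}(\Omega\cap B_{R/2}(x_0))}\bigr),\qquad p^{\ast}=\tfrac{np}{n-\beta p},\]
when $\beta p<n$, and an analogous $L^{\infty}$ bound when $\beta p\ge n$. Since $q<p^{\ast}$ and the domain is bounded, H\"older's inequality converts this into the required $L^{q}$ estimate.

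For the Gagliardo seminorm I would interpolate the pointwise bound with the trivial estimate $|g(x)-g(y)|\le|g(x)|+|g(y)|$: for a parameter $\lambda\in(tq/\beta,q]$,
\[\iint \frac{|g(x)-g(y)|^{q}}{|x-y|^{n+tq}}\,dx\,dy\le c\iint \frac{\bigl(F(x)+F(y)\bigr)^{\lambda}\bigl(|g(x)|^{q-\lambda}+|g(y)|^{q-\lambda}\bigr)}{|x-y|^{n-(\beta\lambda-tq)}}\,dx\,dy,\]
whose kernel is now locally integrable in $y$. I would then apply H\"older's inequality in the double integral so that $F^{\lambda}$ pairs with $L^{p/\lambda}$ and $|g|^{q-\lambda}$ pairs with $L^{p^{\ast}/(q-\lambda)}$, and invoke a Riesz potential bound for the remaining kernel; the residual $L^{p^{\ast}}$ factor is absorbed by the previous step, closing the estimate.

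The principal obstacle is the gain of integrability from $L^{p}$ for $F$ to the larger $L^{q}$ for the Gagliardo quotient of $g$: a naive integration of the pointwise inequality only yields a bound in terms of $\|F\|_{L^{q}}$, which is weaker. The required Sobolev-type improvement hinges on the Hedberg inequality and on the maximal function bound, and its boundary-compatible form depends crucially on the geometric hypothesis $g\equiv 0$ outside $\Omega$ combined with \eqref{rmk.bdry}, which together allow the pointwise control provided by Lemma \ref{lem.sharp} to be propagated uniformly up to $\partial\Omega$.
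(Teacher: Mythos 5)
Your route is genuinely different from the paper's (which flattens the boundary with the $C^{1,\alpha}$ charts, extends by even reflection, and then applies the embedding of \cite[Proposition 2.8]{DieNow23} to the reflected function on a ball, plus a covering argument), and working directly on $\Omega\cap B_{R/4}(x_0)$ with Lemma \ref{lem.sharp} and \eqref{rmk.bdry} is a reasonable idea; but your second step has a genuine gap. After the interpolation $|g(x)-g(y)|^{q}\le c|x-y|^{\beta\lambda}(F(x)+F(y))^{\lambda}(|g(x)|+|g(y)|)^{q-\lambda}$ you must handle not only the cross terms but also the same-point terms, which, after integrating the kernel $|x-y|^{-(n-(\beta\lambda-tq))}$ in the other variable over the bounded domain, reduce to $\int F(x)^{\lambda}|g(x)|^{q-\lambda}\,dx$. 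With the only available information $F\in L^{p}$ and $g\in L^{p^{*}}$, H\"older requires $\frac{\lambda}{p}+\frac{q-\lambda}{p^{*}}\le 1$; but using $q=\frac{np}{n-(\beta-t)p}$ and $\frac{1}{p^{*}}=\frac1p-\frac{\beta}{n}$ one computes $\frac{\lambda}{p}+\frac{q-\lambda}{p^{*}}=1+\frac{\beta\lambda-tq}{n}>1$ for every admissible $\lambda>tq/\beta$. The cross terms close only because the Riesz potential supplies exactly the missing gain $\frac{\beta\lambda-tq}{n}$ (the Hardy--Littlewood--Sobolev scaling there is precisely critical); for the diagonal terms no kernel gain is available, so the proposed pairing of $F^{\lambda}$ with $L^{p/\lambda}$ and $|g|^{q-\lambda}$ with $L^{p^{*}/(q-\lambda)}$ cannot work. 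This is not a bookkeeping slip: the target space sits exactly on the critical line $t-\frac nq=\beta-\frac np$, where Nikolskii-type information ($\|g(\cdot+h)-g\|_{L^p}\lesssim|h|^{\beta}$) is strictly insufficient and the full pointwise sharp-maximal structure must be exploited; any argument that, like yours, ends up invoking only $\|F\|_{L^p}$ and $\|g\|_{L^{p^*}}$ through critical H\"older/HLS exponents will be stuck at exactly this point.

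Two further inaccuracies in your first step. The near-boundary claim $|g(x)|\lesssim d(x,\partial\Omega)^{\beta}F(x)$ does not follow from Lemma \ref{lem.sharp} applied at a zero $z$ of $g$: the lemma produces $F(x)+F(z)$, and $F(z)$ is not controlled by $F(x)$ (this is harmless, since the claim is not needed). More importantly, balancing $\rho^{\beta}F(x)$ against $\rho^{-n/p}\|g\|_{L^{p}}$ only yields $|g(x)|\lesssim F(x)^{\frac{n}{n+\beta p}}\|g\|_{L^p}^{\frac{\beta p}{n+\beta p}}$, hence integrability $p(n+\beta p)/n$, which is strictly below $p^{*}$ and need not even reach $q$. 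The asserted bound $\|g\|_{L^{p^{*}}}\lesssim\|F\|_{L^{p}}+\|g\|_{L^{p}}$ is true, but its proof requires the chained/averaged form of the oscillation estimate, e.g. $|g(x)-(g)_{\Omega\cap B_{\rho}(x)}|\lesssim\int_{B_{\rho}(x)}|x-y|^{\beta-n}F(y)\,dy$ obtained by telescoping averages of the two-point inequality (using \eqref{rmk.bdry}), followed by the genuine Hedberg/maximal-function argument for the Riesz potential, or an appeal to the Hajlasz--Sobolev embedding. So the intermediate $L^{p^{*}}$ statement can be salvaged, but the Gagliardo-seminorm step fails as written, and the proof does not close along the proposed lines.
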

\begin{proof}
 We may assume $x_0=0$ and $R=1$. We note that there is a radius $r_0=r_0(\Omega)\in(0,1/2]$ such that for any $z\in \partial\Omega$, there is a $C^{1,\alpha}$ function $\gamma\equiv\gamma_z:\bbR^{n-1}\to \bbR$ such that 
    \begin{equation}\label{assm.gamma}
        \Psi(\Omega\cap B_{r_0}(z))=\{(x',x_n)\in B_{r_0}(z)\,:x_n>\gamma(x')\}\quad\text{and}\quad \|\gamma\|_{C^{1,\alpha}}\leq c
    \end{equation}
    for some constant $c=c(\Omega)$, where $\Psi\equiv\Psi_z:\bbR^n\to\bbR^n$ is a rotation around the point $z$. Let us write 
    \begin{equation*}
        \Phi(x',x_n)\coloneqq(x',x_n-\gamma(x'))\quad\text{for any }x\in B_{r_0}(z).
    \end{equation*}
    Then there are constants $a_0=a_0(\Omega)\in(0,16)$ and $r_1=r_1(\Omega)\in(0,1)$ such that 
    \begin{equation}\label{ineq1.lem.max}
        (\Phi\circ\Psi)(\Omega\cap B_{a_0r_0}(z))\subset B_{r_1}^+(z',0)\subset B_{2r_1}^+(z',0)\subset  (\Phi\circ\Psi)(\Omega \cap B_{r_0/8}(z))
    \end{equation}
    and
    \begin{equation*}
        (\Phi\circ\Psi)(\partial\Omega\cap B_{a_0r_0}(z))\subset \{(x',x_n)\in B_{2r_1}\,:\,x_n=0\}\subset(\Phi\circ\Psi)(\partial\Omega\cap B_{r_0/8}(z)),
    \end{equation*}
    where we write 
    \begin{equation*}
       B_{2r_1}^+(z',0)\coloneqq\{(x',x_n)\in B_{2r_1}(z',0)\,:\,x_n>0\}.  
       \end{equation*}
    We first prove for any $z\in \partial\Omega$,
    \begin{equation}\label{ineq.fir.emb}
        \|g\|_{W^{t,q}(\Omega\cap B_{a_0r_0}(z))}\leq c\|M^{\#,\Omega}_{\beta,1/4}(g)\|_{L^p(\Omega\cap B_{r_0}(z))}+c\|g\|_{L^p(\Omega\cap B_{r_0}(z))},
    \end{equation}
    where $c=c(n,\beta,t,p,\Omega)$. 
    We now define 
    \begin{equation*}
        G(x)\coloneqq g((\Phi\circ\Psi)^{-1}(x))
    \end{equation*}
    to observe that 
    \begin{equation}\label{ineq11.lem.max}
    \begin{aligned}
        &\int_{\Omega\cap B_{a_0r_0}(z)}\int_{\Omega\cap B_{a_0r_0}(z)}\frac{|g(x)-g(y)|^q}{|x-y|^{n+tq}}\,dx\,dy\\
        &= \int_{ (\Phi\circ\Psi)(\Omega\cap B_{a_0r_0}(z))}\int_{ (\Phi\circ\Psi)(\Omega\cap B_{a_0r_0}(z))}\frac{|G(x)-G(y)|^q}{|x-y|^{n+tq}}\frac{|x-y|^{n+tq}}{|(\Phi\circ\Psi)^{-1}(x)-(\Phi\circ\Psi)^{-1}(y)|^{n+tq}}\,dx\,dy\\
        &\leq \int_{B_{r_1}^{+}(z',0)}\int_{B_{r_1}^{+}(z',0)}\frac{|G(x)-G(y)|^q}{|x-y|^{n+tq}}\,dx\,dy
    \end{aligned}
    \end{equation}
    for some constant $c=c(n,q,t,\Omega)$, where we have used change of variables, \eqref{assm.gamma}, \eqref{ineq1.lem.max} and
    \begin{equation*}
        |\mathrm{det}(\nabla (\Phi\circ\Psi)^{-1})|(x)=1.
    \end{equation*}
    We next define the function $\overline{G}:B_{2r_1}(z',0)\to\bbR$ as \begin{align*}
        \overline{G}(z',z_n)\coloneqq\begin{cases}
            G(z',z_n)&\quad\text{if }z_n\geq0,\\
            G(z',-z_n)&\quad\text{if }z_n<0.
        \end{cases}
    \end{align*}
    We then observe that for any $x,y\in B_{2r_1}^{+}(z',0)$, 
    \begin{equation}\label{defn1.overlineg}
    \begin{aligned}
        &|\overline{G}(x',x_n)-\overline{G}(y',y_n)|\\
        &=|g((\Phi\circ\Psi)^{-1}(x))-g((\Phi\circ\Psi)^{-1}(y))|\\
        &\leq c\left(M^{{\#},\Omega}_{\beta,r_0/2}(g)((\Phi\circ\Psi)^{-1}(x))-M^{{\#},\Omega}_{\beta,r_0/2}(g)((\Phi\circ\Psi)^{-1}(y))\right)|(\Phi\circ\Psi)^{-1}(x)-(\Phi\circ\Psi)^{-1}(y)|^\beta\\
        &\leq c\left(M^{{\#},\Omega}_{\beta,r_0/2}(g)((\Phi\circ\Psi)^{-1}(x))-M^{{\#},\Omega}_{\beta,r_0/2}(g)((\Phi\circ\Psi)^{-1}(y))\right)|x-y|^{\beta}
    \end{aligned}
    \end{equation}
    for some constant $c=c(n,\beta,\Omega)$, where we have used Lemma \ref{lem.sharp} and the second inequality in \eqref{assm.gamma}. Similarly, if  $x,y\in B_{2r_1}(z',0)$ with $x_n<0$ and $y_n>0$, then we get 
    \begin{equation}\label{defn.overlineg}
    \begin{aligned}
        &|\overline{G}(x',x_n)-\overline{G}(y',y_n)|\\
        &=|{G}(x',-x_n)-{G}(y',y_n)|\\
                &=|g((\Phi\circ\Psi)^{-1}(x',-x_n))-g((\Phi\circ\Psi)^{-1}(y))|\\
        &\leq c\left(M^{{\#},\Omega}_{\beta,r_0/2}(g((\Phi\circ\Psi)^{-1})(x',-x_n)-M^{{\#},\Omega}_{\beta,r_0/2}(g)((\Phi\circ\Psi)^{-1}(y))\right)|x-y|^{\beta},
    \end{aligned}
    \end{equation}
    where we have also used 
    \begin{equation*}
        |(x',-x_n)-(y',y_n)|\leq |x-y|\quad\text{if }x_n<0\quad\text{and}\quad y_n>0.
    \end{equation*}
    Combining \eqref{defn1.overlineg} and \eqref{defn.overlineg}, and noting \eqref{assm.gamma}, we deduce that
    \begin{equation}\label{ineq2.lem.max}
    \begin{aligned}
        \sup_{0<|h|<r_1/8}\left(\int_{B_{r_1}(z',0)}\frac{|\overline{G}(x+h)-\overline{G}(x)|^{p}}{|h|^{tp}}\,dx\right)^{\frac1{{p}}}&\leq c\left(\int_{B^{+}_{2r_1}(z',0)}|M^{{\#},\Omega}_{\beta,r_0/2}(g)((\Phi\circ\Psi)^{-1}(x))|^{{p}}\,dx\right)^{\frac{1}{{p}}}\\
        &\leq c\left(\int_{\Omega\cap B_{r_0/8}(z)}|M^{{\#},\Omega}_{\beta,1/4}(g)(x)|^{{p}}\,dx\right)^{\frac{1}{{p}}}.
    \end{aligned}
    \end{equation}
    As in the proof of \cite[Proposition 2.8]{DieNow23}, we get 
    \begin{align}\label{ineq3.lem.max}
        \|\overline{G}\|_{W^{t,q}(B_{r_1}(z',0))}\leq c\|\overline{G}\|^q_{L^p(B_{r_1}(z',0))}+c\sup_{0<|h|<r_1/8}\left(\int_{(B_{r_1}(z',0))}\frac{|\overline{G}(x+h)-\overline{G}(x)|^{{p}}}{|h|^{t{p}}}\,dx\right)^{\frac1{{p}}}
    \end{align}
    for some constant $c=c(n,s,\beta,p,\Omega)$. 
    Combining all the estimates \eqref{ineq11.lem.max}, \eqref{ineq2.lem.max} and \eqref{ineq3.lem.max}, we have \eqref{ineq.fir.emb}. 
    
    On the other hand, if $B_{r_0}(z)\subset \Omega$, we get \eqref{ineq.fir.emb} directly from  \cite[Proposition 2.8]{DieNow23}. Therefore, by a standard covering argument, we have the desired estimate of the lemma.
\end{proof}

We now present a localization argument that gives us the freedom to replace the local regularity assumptions on the weak solution and the kernel coefficient with the respective global ones. 
\begin{lemma}\label{lem.localization}
 Let $u\in W^{s,2}(B_{4R}(x_0))\cap L^1_{2s}(\bbR^n)$ be a weak solution to 
    \begin{equation}
\label{eq: localization}
\left\{
\begin{alignedat}{3}
\mathcal{L}_Au&= f&&\qquad \mbox{in  $\Omega \cap B_R(x_0)$}, \\
u&=0&&\qquad  \mbox{in $B_R(x_0)\setminus \Omega$},
\end{alignedat} \right.
\end{equation}
where $A$ satisfies \eqref{cond.kernel}. For any kernel coefficient $a=a(x,y)$ satisfying \eqref{cond.kernel}, there is a weak solution $\widetilde{u}\in W^{s,2}(\bbR^n)$ to 
\begin{equation}
\label{eq: localized}
\left\{
\begin{alignedat}{3}
\mathcal{L}_{\widetilde{A}}\widetilde{u}&= f+g_1+g_2&&\qquad \mbox{in  $\Omega \cap B_{R/2}(x_0)$}, \\
\widetilde{u}&=0&&\qquad  \mbox{in $B_{R/2}(x_0)\setminus \Omega$},\\
\widetilde{u}&=u&&\qquad  \mbox{in $B_{5R/8}(x_0)$},
\end{alignedat} \right.
\end{equation}
where $\widetilde{A}$ satisfies  \eqref{cond.kernel} along with $\widetilde{A}(x,y)=A(x,y)$ in $B_{5R/8}(x_0)\times B_{5R/8}(x_0)$,  $\widetilde{A}(x,y)=a(x,y)$ in $\bbR^{2n}\setminus (B_{3R/4}(x_0)\times B_{3R/4}(x_0))$, $|g_1(x)|\leq cR^{-2s}|u(x)|$ in $B_{R/2}(x_0)$  and
$g_2\in L^\infty(B_{R/2}(x_0))$ with the estimate
\begin{equation*}
    \|g_2\|_{L^\infty(B_{R/2}(x_0))}\leq cR^{-2s}\mathrm{Tail}(u;B_{5R/8}(x_0)).
\end{equation*}
Furthermore, if $A$ is continuous in $B_{R}(x_0)\times B_{R}(x_0)$ with 
\begin{equation}\label{ass.loc}
    |A(x,y)-A(x_0,x_0)|\leq \omega(\max\{|x-x_0|,|y-x_0|\})\quad\text{for any }x,y\in B_R(x_0),
\end{equation}
for some non-decreasing function $\omega:\bbR_+\to\bbR_+$ 
 with $\omega(0)=0$ and if we select $a=A(x_0,x_0)$, then $\widetilde{A}$ is continuous in $\bbR^n\times \bbR^n$ and satisfies
 \begin{equation}\label{cond.tildea}
    |\widetilde{A}(x,y)-\widetilde{A}(x_0,x_0)|\leq \widetilde{\omega}(\max\{|x-x_0|,|y-x_0|\})\quad\text{for any }x,y\in\bbR^n,
\end{equation}
where 
\begin{align*}
    \widetilde{\omega}(\rho)=\begin{cases}
        \omega(\rho)\quad&\text{if }\rho\leq R,\\
        0&\text{if }\rho> R.
    \end{cases} 
\end{align*}
\end{lemma}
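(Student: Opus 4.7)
The plan is to truncate $u$ by a smooth cutoff and interpolate between $A$ and $a$ via another smooth cutoff. Fix $\eta,\zeta\in C^\infty_c(B_{3R/4}(x_0))$ with $\eta\equiv\zeta\equiv 1$ on $B_{5R/8}(x_0)$, $0\leq\eta,\zeta\leq 1$, $|\nabla\eta|+|\nabla\zeta|\leq cR^{-1}$, and set
\begin{equation*}
    \widetilde{u}(x)\coloneqq \eta(x)u(x),\qquad \widetilde{A}(x,y)\coloneqq \zeta(x)\zeta(y)A(x,y)+\bigl(1-\zeta(x)\zeta(y)\bigr)a(x,y).
\end{equation*}
As a convex combination of two kernels satisfying \eqref{cond.kernel}, $\widetilde{A}$ itself satisfies \eqref{cond.kernel}, and by construction it coincides with $A$ on $B_{5R/8}(x_0)\times B_{5R/8}(x_0)$ and with $a$ outside $B_{3R/4}(x_0)\times B_{3R/4}(x_0)$. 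The identity $\widetilde{u}\equiv u$ on $B_{5R/8}(x_0)$ is immediate, while $\widetilde{u}\equiv 0$ on $B_{R/2}(x_0)\setminus\Omega$ follows from $u\equiv 0$ on $B_R(x_0)\setminus\Omega$, and $\widetilde{u}\in W^{s,2}(\bbR^n)$ follows from standard cutoff estimates using $u\in W^{s,2}(B_{4R}(x_0))\cap L^1_{2s}(\bbR^n)$. Under \eqref{ass.loc} with $a=A(x_0,x_0)$, I would verify \eqref{cond.tildea} in two cases: when $\rho\coloneqq\max\{|x-x_0|,|y-x_0|\}\leq R$ the identity $\widetilde{A}(x,y)-A(x_0,x_0)=\zeta(x)\zeta(y)(A(x,y)-A(x_0,x_0))$ combined with \eqref{ass.loc} bounds the left side by $\omega(\rho)$; when $\rho>R$ at least one of $\zeta(x),\zeta(y)$ vanishes, so $\widetilde{A}(x,y)=a=A(x_0,x_0)$ and we may take $\widetilde{\omega}\equiv 0$.

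To identify $g_1$ and $g_2$, I would test the bilinear form of $\mathcal{L}_{\widetilde{A}}\widetilde{u}$ against an arbitrary $\psi\in X^{s,2}_0(\Omega\cap B_{R/2}(x_0))$ and compare with that of $\mathcal{L}_A u$, writing
\begin{align*}
    \iint\frac{(\widetilde{u}(x)-\widetilde{u}(y))(\psi(x)-\psi(y))}{|x-y|^{n+2s}}\widetilde{A}(x,y)\,dx\,dy &= \iint\frac{(u(x)-u(y))(\psi(x)-\psi(y))}{|x-y|^{n+2s}}A(x,y)\,dx\,dy+R_1+R_2,
\end{align*}
where $R_1$ comes from the kernel swap $\widetilde{A}-A=(1-\zeta(x)\zeta(y))(a-A)$ and $R_2$ from the function swap $\widetilde{u}-u=(\eta-1)u$; the first term on the right equals $\int f\psi\,dx$ by the weak formulation of \eqref{eq: localization}, since $X^{s,2}_0(\Omega\cap B_{R/2}(x_0))\subset X^{s,2}_0(\Omega\cap B_R(x_0))$. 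Since $\mathrm{supp}\,\psi\subset B_{R/2}(x_0)\subset B_{5R/8}(x_0)$, whereas both $\widetilde{u}-u$ and $(x,y)\mapsto(\widetilde{A}-A)(x,y)$ are supported where at least one argument lies in $B_{5R/8}(x_0)^c$, the symmetry of the kernels reduces $R_1,R_2$ to one-sided integrals over $x\in B_{R/2}(x_0)$, $y\in B_{5R/8}(x_0)^c$, where crucially $|x-y|\geq R/8$ and $|x-y|\geq\tfrac{1}{5}|y-x_0|$.

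On this region, expanding the four product terms of $(u(x)-u(y))\psi(x)$ and $(\widetilde{u}(x)-\widetilde{u}(y))\psi(x)$ and using Fubini to isolate the $\psi$-factor produces a pointwise multiplier $g_1$ satisfying
\begin{equation*}
    |g_1(x)|\leq c|u(x)|\int_{B_{5R/8}(x_0)^c}\frac{dy}{|x-y|^{n+2s}}\leq cR^{-2s}|u(x)|
\end{equation*}
from the $u(x)$-pieces, and an $L^\infty$ function $g_2$ satisfying
\begin{equation*}
    \|g_2\|_{L^\infty(B_{R/2}(x_0))}\leq c\sup_{x\in B_{R/2}(x_0)}\int_{B_{5R/8}(x_0)^c}\frac{|u(y)|}{|x-y|^{n+2s}}\,dy\leq cR^{-2s}\mathrm{Tail}(u;B_{5R/8}(x_0))
\end{equation*}
from the $u(y)$-pieces, where the last inequality uses $|x-y|\geq\tfrac{1}{5}|y-x_0|$. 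The main obstacle is really a bookkeeping nuisance: one must carefully track which of the four product terms from $(a-b)(c-d)$ vanish on which subsets, and combine them with the symmetry of the kernels to keep the final $g_1$ and $g_2$ well-defined; the underlying estimates themselves are elementary and rely only on the uniform bound $|a-A|\leq 2\Lambda$ together with the non-singularity of the error region.
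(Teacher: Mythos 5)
Your proposal is correct and follows essentially the same route as the paper: the paper also sets $\widetilde{u}=\phi u$ and $\widetilde{A}=\phi(x)\phi(y)A+(1-\phi(x)\phi(y))a$ with a single cutoff $\phi\in C_c^\infty(B_{3R/4}(x_0))$, $\phi\equiv1$ on $B_{5R/8}(x_0)$, compares the two bilinear forms against $\psi\in X^{s,2}_0(\Omega\cap B_{R/2}(x_0))$, and reduces the kernel-swap and function-swap errors to one-sided integrals over $x\in B_{R/2}(x_0)$, $y\notin B_{5R/8}(x_0)$, yielding the same $g_1$, $g_2$ bounds and the same two-case verification of \eqref{cond.tildea}. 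The only cosmetic difference is your use of two cutoffs $\eta,\zeta$, which collapses to the paper's argument upon taking $\eta=\zeta$.
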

\begin{proof}
    We may assume $x_0=0$. Let us take a cut off function $\phi\in C_c^\infty(B_{3R/4})$ with $\phi\equiv 1$ on $B_{5R/8}$. We denote 
    \begin{equation}\label{defn.tildeua}
        \widetilde{u}(x)\coloneqq u(x)\phi(x)\quad\text{and}\quad \widetilde{A}(x,y)\coloneqq A(x,y)\phi(x)\phi(y)+a(x,y)(1-\phi(x)\phi(y)).
    \end{equation}
 For any $\psi \in X_0^{s,2}(\Omega\cap B_{R/2})$, we have 
    \begin{align}\label{eq1.lem.loc}
        \int_{\bbR^n}\int_{\bbR^n}\frac{(u(x)-u(y))(\psi(x)-\psi(y))}{|x-y|^{n+2s}}A(x,y)\,dx\,dy=\int_{\Omega\cap B_{R/2}}f\psi\,dx.
    \end{align}
    We next observe 
    \begin{align*}
        &\int_{\bbR^n}\int_{\bbR^n}\frac{(\widetilde{u}(x)-\widetilde{u}(y))(\psi(x)-\psi(y))}{|x-y|^{n+2s}}(\widetilde{A}(x,y)-A(x,y))\,dx\,dy\\
        &=2\int_{\bbR^n\setminus B_{5R/8}}\int_{B_{R/2}}\frac{(u(x)-\widetilde{u}(y))\psi(x)}{|x-y|^{n+2s}}(\widetilde{A}(x,y)-A(x,y))\,dx\,dy
    \end{align*}
    and
    \begin{equation}\label{eq2.lem.loc}
    \begin{aligned}
        &\int_{\bbR^n}\int_{\bbR^n}\frac{[(\widetilde{u}(x)-\widetilde{u}(y))-({u}(x)-{u}(y))](\psi(x)-\psi(y))}{|x-y|^{n+2s}}A(x,y)\,dx\,dy \\
        &=2\int_{\bbR^n\setminus B_{5R/8}}\int_{B_{R/2}}\frac{(-\widetilde{u}(y)+{u}(y))\psi(x)}{|x-y|^{n+2s}}A(x,y)\,dx\,dy.
    \end{aligned}
    \end{equation}
    Combining the above three equations, we obtain
    \begin{align*}
        \int_{\bbR^n}\int_{\bbR^n}\frac{(\widetilde{u}(x)-\widetilde{u}(y))(\psi(x)-\psi(y))}{|x-y|^{n+2s}}\widetilde{A}(x,y)\,dx\,dy=\int_{\Omega\cap B_{R/2}}(f+g_1+g_2)\psi,
    \end{align*}
    where 
    \begin{equation*}
        g_1(x)=2\int_{\bbR^n\setminus B_{5R/8}}\frac{u(x)}{|x-y|^{n+2s}}(\widetilde{A}(x,y)-A(x,y))\,dy
    \end{equation*}
    and
    \begin{equation*}
        g_2(x)=\int_{\bbR^n\setminus B_{5R/8}}\frac{(-4\widetilde{u}(y)+2{u}(y))}{|x-y|^{n+2s}}A(x,y)\,dy
    \end{equation*}
    for any $x\in B_{R/2}$. This implies that $\widetilde{u}\in W^{s,2}(\bbR^n)$ is a weak solution to \eqref{eq: localization} and
    \begin{align*}
        |g_1(x)|\leq cR^{-2s}|u(x)|\text{ for any } x\in B_{R/2}\quad\text{and}\quad \sup_{x\in B_{R/2}}|g_2(x)|\leq cR^{-2s}\mathrm{Tail}(u;B_{5R/8})
    \end{align*}
    for some constant $c=c(n,s,\Lambda)$. 

     We now assume that $A$ is continuous in $B_R\times B_R$ satisfying \eqref{ass.loc} with $x_0=0$. Then
     \begin{align*}
         |\widetilde{A}(x,y)-\widetilde{A}(0,0)|&=|A(x,y)\phi(x)\phi(y)+A(0,0)(1-\phi(x)\phi(y))-A(0,0)|\\
         &=|(A(x,y)-A(0,0))\phi(x)\phi(y)|.
     \end{align*}
    If $(x,y)\in B_R\times B_R$, we use \eqref{ass.loc} with $x_0=0$ to get \eqref{cond.tildea}. On the other hand, if $(x,y)\notin \bbR^{2n}\setminus(B_R\times B_R)$, we use the fact that $\phi\equiv 0$ in $\bbR^n\setminus B_{3R/4}$ to get \eqref{cond.tildea}. This completes the proof.
\end{proof}

\begin{remark}\label{rmk.localization}
 We remark that if we do not want to localize the coefficient function $A$, then choosing a cut off function $\phi\in C_{c}^\infty(B_{{15R/16}}(x_0))$ with $\phi\equiv 1$ on $B_{7R/8}(x_0)$ taking into account  \eqref{eq1.lem.loc} and \eqref{eq2.lem.loc}, we observe that $\widetilde{u}\in W^{s,2}(\bbR^n)$ is a weak solution to 
\begin{equation*}
\left\{
\begin{alignedat}{3}
\mathcal{L}_{{A}}\widetilde{u}&= f+g_2&&\qquad \mbox{in  $\Omega \cap B_{3R/4}(x_0)$}, \\
\widetilde{u}&=0&&\qquad  \mbox{in $B_{3R/4}(x_0)\setminus \Omega$},\\
\widetilde{u}&=u&&\qquad  \mbox{in $B_{15R/16}(x_0)$},
\end{alignedat} \right.
\end{equation*}
where $g_2\in L^\infty(B_{3R/4}(x_0)$ with 
\begin{equation*}
    \|g_2\|_{L^\infty(B_{3R/4})(x_0)}\leq cR^{-2s}\mathrm{Tail}(u;B_{15R/16}(x_0))
\end{equation*}
for some constant $=c(n,s,\Lambda)$.
\end{remark}

We next provide the following fractional Hardy type inequality, which will be frequently used in the remaining parts.
\begin{lemma}\label{lem.sobpoi.ds}
    Let $g\in W^{s,2}(\bbR^n)$ with $g=0$ in $\bbR^n\setminus (\Omega \cap B_R(x_0) )$ with $R\leq1$. Then we have  
    \begin{equation*}
        \int_{\Omega\cap B_R(x_0)}\left|\frac{g}{d^s}\right|^2\,dx\leq \frac{c}{\max\left\{1,d^s(x_0)/R^s\right\}^2}[g]^2_{W^{s,2}(\bbR^n)}
    \end{equation*}
    for some constant $c=c(n,s,\Omega)$. 
\end{lemma}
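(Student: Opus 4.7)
The plan is to split into two regimes according to whether $x_0$ is close to or far from $\partial\Omega$, using a threshold of $d(x_0) \approx R$; this is what the $\max\{1,d^s(x_0)/R^s\}$ factor suggests. In one regime the standard fractional Hardy inequality on a Lipschitz (or $C^{1,\alpha}$) domain does the job, while in the other regime $d$ is comparable to $d(x_0)$ on $B_R(x_0)$ and a fractional Poincaré/Friedrichs inequality for $W^{s,2}_0$-type functions supplies the $R^{2s}$ factor.

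More precisely, first suppose $d(x_0) < 2R$. Then $\max\{1, d^s(x_0)/R^s\} \leq 2^s$, so it is enough to prove the unweighted inequality
\[
\int_{\Omega\cap B_R(x_0)}\Bigl|\frac{g}{d^s}\Bigr|^2\,dx \leq c\,[g]^2_{W^{s,2}(\bbR^n)}.
\]
Since $g=0$ on $\bbR^n\setminus\Omega$, this is the classical fractional Hardy inequality on the $C^{1,\alpha}$-domain $\Omega$ (the constant here absorbs the $\Omega$-dependence). I would simply cite this and conclude the first case.

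Next suppose $d(x_0)\geq 2R$. Since the distance function to $\partial\Omega$ is $1$-Lipschitz, $d(x)\geq d(x_0)-R\geq d(x_0)/2$ for every $x\in B_R(x_0)$, and in particular $B_R(x_0)\subset\Omega$, so $g$ is supported in $B_R(x_0)$. Hence
\[
\int_{\Omega\cap B_R(x_0)}\Bigl|\frac{g}{d^s}\Bigr|^2 dx \leq \frac{2^{2s}}{d^{2s}(x_0)}\int_{B_R(x_0)}|g|^2 dx.
\]
Because $g\in W^{s,2}(\bbR^n)$ vanishes outside $B_R(x_0)$, a fractional Poincaré/Friedrichs inequality (obtained for instance by combining the fractional Sobolev embedding $\|g\|_{L^{2^*_s}(\bbR^n)}\leq c\,[g]_{W^{s,2}(\bbR^n)}$ with Hölder on $B_R(x_0)$, or equivalently by scaling from $B_1$) yields $\|g\|_{L^2(B_R(x_0))}^2 \leq c\,R^{2s}\,[g]^2_{W^{s,2}(\bbR^n)}$. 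Plugging this in gives
\[
\int_{\Omega\cap B_R(x_0)}\Bigl|\frac{g}{d^s}\Bigr|^2 dx \leq \frac{c\,R^{2s}}{d^{2s}(x_0)}[g]^2_{W^{s,2}(\bbR^n)} = \frac{c}{(d^s(x_0)/R^s)^2}[g]^2_{W^{s,2}(\bbR^n)},
\]
which matches the stated bound since $\max\{1,d^s(x_0)/R^s\}=d^s(x_0)/R^s$ in this regime.

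I do not anticipate any serious obstacle; the main thing is to invoke the correct fractional Hardy inequality on the $C^{1,\alpha}$-domain (which is standard for Lipschitz domains, and $C^{1,\alpha}\subset$ Lipschitz) and to justify the Poincaré step for compactly supported functions. Combining the two cases gives the stated estimate with $c=c(n,s,\Omega)$.
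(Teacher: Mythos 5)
Your proposal is correct and follows essentially the same route as the paper: the same dichotomy $d(x_0)\lessgtr 2R$, a fractional Hardy inequality with the full $\bbR^n$-seminorm in the near-boundary case (the paper cites \cite[Lemma 2.4]{KimWei24}), and in the far case the comparability $d\simeq d(x_0)$ on $B_R(x_0)$ combined with a fractional Poincar\'e bound $\|g\|_{L^2(B_R(x_0))}^2\leq cR^{2s}[g]^2_{W^{s,2}(\bbR^n)}$ (the paper cites \cite[Lemma 4.7]{Coz17}, while your Sobolev-embedding-plus-H\"older derivation is an equally valid substitute since $2s<n$).
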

\begin{proof}
    We first assume $d(x_0)>2R$. Then $B_{3R/2}(x_0)\subset \Omega$ and $d(x)>d(x_0)/4$ for any $x\in B_{5R/4}(x_0)$. Therefore, by \cite[Lemma 4.7]{Coz17}, we have 
    \begin{align}\label{ineq1.lem.sobpoi}
        \int_{\Omega\cap B_R(x_0)}\left|\frac{g}{d^s}\right|^2\,dx\leq c\frac{R^{2s}}{d^{2s}(x_0)}\int_{\Omega\cap B_R(x_0)}\left|\frac{g}{R^s}\right|^2\,dx\leq c\frac{R^{2s}}{d^{2s}(x_0)}[g]^2_{W^{s,2}(\bbR^n)}.
    \end{align}
    We next assume $d(x_0)\leq 2R$. Then by \cite[Lemma 2.4]{KimWei24}, we have 
    \begin{align}\label{ineq2.lem.sobpoi}
        \int_{\Omega\cap B_R(x_0)}\left|\frac{g}{d^s}\right|^2\,dx\leq c[g]^2_{W^{s,2}(\bbR^n)}
    \end{align}
    for some constant $c=c(n,s,\Omega)$. Combining two estimates \eqref{ineq1.lem.sobpoi} and \eqref{ineq2.lem.sobpoi} yields the desired result of the lemma.
\end{proof}

We prove self-improving properties of the fractional Laplacian type equations up to the boundary. 
\begin{lemma}\label{lem.self}
Let $R\in(0,1]$ and let $w\in W^{s,2}(\bbR^n)$ be a weak solution to 
    \begin{equation}
\label{eq: self}
\left\{
\begin{alignedat}{3}
\mathcal{L}_Aw&= 0&&\qquad \mbox{in  $\Omega \cap B_{4R}(x_0)$}, \\
w&=0&&\qquad  \mbox{in $B_{4R}(x_0)\setminus \Omega$}.
\end{alignedat} \right.
\end{equation}
Then there is a constant $\delta_0=\delta_0(\mathsf{data})\in(0,1)$ such that $w\in W^{s+\frac{n\delta_0}{2+2\delta_0},2+2\delta_0}(B_{2R}(x_0))$ with the estimate 
\begin{equation}\label{ineq.self.1}
    R^{s+\frac{n\delta_0}{2+2\delta_0}-\frac{n}{2}}[w]_{W^{s+\frac{n\delta_0}{2+2\delta_0},2+2\delta_0}(B_{2R}(x_0))}\leq c\left(R^{s-\frac{n}{2}}\|w\|_{L^{2}(B_{3R}(x_0))}+\mathrm{Tail}(w;B_{3R}(x_0))\right),
\end{equation} 
where $c=c(n,s,\Lambda)$.
\end{lemma}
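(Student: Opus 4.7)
The proof is a boundary version of the nonlocal self-improving property first established in the interior setting by Kuuusio--Mingione--Sire. The plan is to establish a reverse H\"older inequality for the ``fractional gradient'' $(x,z)\mapsto |w(x)-w(z)|/|x-z|^s$ on dyadic balls inside $B_{3R}(x_0)$, and to apply a nonlocal Gehring-type lemma. A key simplification in the boundary setting is that, on balls touching $\partial\Omega$, the vanishing condition $w\equiv 0$ in $B_{4R}(x_0)\setminus\Omega$, together with the measure density bound \eqref{rmk.bdry}, gives a fractional Poincar\'e inequality with zero trace without having to subtract a mean value.

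First, I would prove a Caccioppoli inequality on every ball $B_r(y)$ with $B_{2r}(y)\subset B_{3R}(x_0)$: testing the weak formulation of \eqref{eq: self} with $\eta^{2}(w-\kappa)$, where $\eta\in C_c^\infty(B_{3r/2}(y))$ is a standard cut-off with $\eta\equiv 1$ in $B_r(y)$, and with $\kappa=0$ in the boundary case or $\kappa=(w)_{B_r(y)}$ in the interior case, and using the ellipticity bound \eqref{cond.kernel}, yields
\begin{equation*}
[w]^2_{W^{s,2}(B_r(y))}\le \frac{C}{r^{2s}}\int_{B_{2r}(y)}|w-\kappa|^2\,dx+Cr^n\,\mathrm{Tail}(w-\kappa;B_{2r}(y))^{2}.
\end{equation*}
The fractional Sobolev--Poincar\'e inequality (with zero trace courtesy of \eqref{rmk.bdry} near the boundary, or its standard mean-zero version in the interior) then bounds the first term on the right by a lower-exponent Gagliardo seminorm, producing, for $2_{*}=\tfrac{2n}{n+2s}<2$, a reverse H\"older inequality of the form
\begin{equation*}
\left(\dashint\!\dashint_{B_r(y)\times B_r(y)}\frac{|w(x)-w(z)|^2}{|x-z|^{n+2s}}\,dx\,dz\right)^{\!1/2}\!\!\lesssim\,\left(\dashint\!\dashint_{B_{2r}(y)\times B_{2r}(y)}\frac{|w(x)-w(z)|^{2_{*}}}{|x-z|^{n+2_{*}s}}\,dx\,dz\right)^{\!1/2_{*}}\!\!+\,\mathcal{T}(y,r),
\end{equation*}
where, after a standard dyadic splitting, $\mathcal{T}(y,r)$ is controlled uniformly by $R^{s-n/2}\|w\|_{L^{2}(B_{3R}(x_0))}+\mathrm{Tail}(w;B_{3R}(x_0))$.

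Finally, I would invoke a Gehring-type self-improving lemma on the product space equipped with the measure $\frac{dx\,dz}{|x-z|^{n+2s}}$ (as in the arguments used in Kuusio--Mingione--Sire or Cozzi in the interior setting) to extract $\delta_{0}>0$ for which the left-hand side above is finite with the exponent $2+2\delta_{0}$; the improved pair $\bigl(s+\tfrac{n\delta_{0}}{2+2\delta_{0}},\,2+2\delta_{0}\bigr)$ is dictated exactly by preserving the scaling $n+\sigma p$ in the Gagliardo denominator throughout the iteration. Summing the resulting local estimates over a finite covering of $B_{2R}(x_0)$ and estimating the tails globally gives \eqref{ineq.self.1}. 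The main obstacle is not the classical Gehring mechanism itself but rather its compatibility with the nonlocal tail inhomogeneity: one must verify that $\mathcal{T}(y,r)$ can be absorbed uniformly across all scales of the product-space level-set/stopping-time decomposition. This is handled by treating the tail as an additive constant in the reverse H\"older inequality and using the uniform dyadic bound on $\mathrm{Tail}(w;B_{2r}(y))$ in terms of $\mathrm{Tail}(w;B_{3R}(x_0))+R^{s-n/2}\|w\|_{L^{2}(B_{3R}(x_0))}$, which is exactly the quantity appearing on the right-hand side of \eqref{ineq.self.1}.
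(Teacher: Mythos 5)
Your overall strategy -- Caccioppoli inequality, a zero-trace fractional Sobolev--Poincar\'e inequality on boundary balls via the measure density \eqref{rmk.bdry} (mean-zero version in the interior), then a Gehring-type lemma on the doubled space and a covering argument -- is the same route the paper takes, following Kuusi--Mingione--Sire. However, two of your steps would fail as set up. The measure you propose on the product space, $\frac{dx\,dz}{|x-z|^{n+2s}}$, is not locally finite: every product ball centered at a diagonal point has infinite mass, so averages over such balls are ill-defined and no Gehring lemma can be run with respect to it; relatedly, your displayed ``reverse H\"older'' inequality carries different kernels on its two sides ($n+2s$ versus $n+2_*s$), so it is not a reverse H\"older inequality for one function with respect to one measure. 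The paper's proof repairs exactly this by introducing $\tau=\min\{s,1-s\}/2$ and the dual pair $W(x,y)=|w(x)-w(y)|/|x-y|^{s+\tau}$, $d\mu=dx\,dy/|x-y|^{n-2\tau}$, so that $\int W^2\,d\mu$ over diagonal balls $\mathcal{B}_\rho(z)=B_\rho(z)\times B_\rho(z)$ reproduces the localized $W^{s,2}$-energy while $\mu$ is finite and doubling there; the lower exponent is then forced to be $q=\frac{2n+4\tau}{n+2s+2\tau}$ (coming from Cozzi's fractional Sobolev inequality in this dual formulation), not $2_*$. Note also that applying that Sobolev inequality on the large balls $B_{a\rho}(z)$ arising from the tail requires the measure density estimate at \emph{all} radii $a\rho$, including those exceeding $1$ and $\mathrm{diam}\,\Omega$; the paper proves this separately, and \eqref{rmk.bdry} alone does not give it.

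The second, more serious gap is the tail. It cannot be ``treated as an additive constant'' bounded uniformly in the center and radius by $R^{s-n/2}\|w\|_{L^{2}(B_{3R}(x_0))}+\mathrm{Tail}(w;B_{3R}(x_0))$: once the reverse H\"older inequality is normalized so that the Gehring iteration applies, the tail term at scale $r$ is comparable to a weighted sum of averages and oscillations of $w$ at all scales between $r$ and $R$, and estimating those by global $L^2$ averages costs factors of order $(R/r)^{n/2}$, which blow up as $r\to 0$. This is precisely the obstruction that makes the nonlocal self-improving property nontrivial. The paper handles it by re-expanding the tail dyadically as $\sigma\sum_{k\geq 1}2^{-k(s-\tau)}\big(\dashint_{\mathcal{B}_{2^{k}\rho}}W^{q}\,d\mu\big)^{1/q}$, i.e. in terms of the same dual-pair quantity on larger diagonal balls, with geometrically decaying coefficients and a free small parameter $\sigma$, and these terms are then absorbed inside the level-set/exit-time argument of the fractional Gehring lemma of \cite{KuuMinSir15s}; only after that does one translate the resulting $L^{2+\delta_1}(d\mu)$-bound for $W$ into the seminorm $[w]_{W^{s+\frac{n\delta_0}{2+2\delta_0},2+2\delta_0}}$ and conclude \eqref{ineq.self.1} via standard energy estimates. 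Without this dual-pair reformulation and dyadic absorption, your final step (``absorb $\mathcal{T}(y,r)$ uniformly across all scales'') has no justification.
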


\begin{proof}
We may assume $x_0=0$. For $\tau\coloneqq\min\{s,1-s\}/2$, we define
\begin{equation*}
    W(x,y)\coloneqq\frac{{w}(x)-{w}(y)}{|x-y|^\tau}\quad\text{and}\quad\mu(E)\coloneqq\int_{E}\frac{\,dx\,dy}{|x-y|^{n-2\tau}},
\end{equation*}
where $x,y\in\bbR^n$ and $E\subset\bbR^{2n}$ is any measurable set. We are now going to prove that there is a constant $c=c(\mathsf{data})$ such that
\begin{equation}\label{ger.ineq}
\begin{aligned}
    \left(\dashint_{\mathcal{B}_\rho(z)}W^2\,d\mu\right)^{\frac12}&\leq \frac{c}{\sigma}\left(\dashint_{\mathcal{B}_{2\rho}(z)}W^{q}\,d\mu_\tau\right)^{\frac{1}{q}}+{\sigma}\sum_{k=1}^\infty 2^{-k(s-\tau)}\left(\dashint_{\mathcal{B}_{2^k\rho}}W^q\,d\mu\right)^{\frac1q},
\end{aligned}
\end{equation}
whenever $\mathcal{B}_\rho(z)\coloneqq B_\rho(z)\times B_\rho(z)$ for any  $z\in  B_{2R}$, $\rho\leq R/100$ and $\sigma\in(0,1)$, where we have set
\begin{equation}\label{cond.q.self}
    q\coloneqq \frac{2n+4\tau}{n+2s+2\tau}\quad\text{and}\quad \eta\coloneqq \frac{s-\tau}{n+2\tau}.
\end{equation}
We first assume $B_{3\rho/2}(z)\cap \partial\Omega=\emptyset$. Then we have $({w}-({w})_{B_{\rho}(z)})\phi^2\in X_{0}^{s,2}(\Omega\cap B_{4R})$ for any $\phi\in C_c^\infty(B_{5\rho/4}(z))$ with $\phi\equiv 1$ on $B_{\rho}(z)$. Thus by testing $({w}-({w})_{B_{\rho}(z)})\phi^2$ to \eqref{eq: self} with $x_0=0$ and by following the same lines as in the proof of \cite[Theorem 3.2]{KuuMinSir15s}, we obtain
\begin{align*}
    \rho^{-n+2s}[{w}]^2_{W^{s,2}(B_\rho(z))}&\leq \frac{c}{\sigma}\dashint_{B_{2\rho}(z)}|{w}-({w})_{B_{2\rho}(z)}|^2\,dx+\sigma\left(\rho^{2s}\int_{\bbR^n\setminus B_{2\rho}(z)}\frac{|{w}-({w})_{B_{2\rho}(z)}|}{|y-z|^{n+2s}}\,dy\right)^2
\end{align*}
for some constant $c=c(\mathsf{data})$. Thus, in this case, the proof of \eqref{ger.ineq} can be completed exactly as in \cite[Proposition 4.4]{KuuMinSir15s}. 

We next assume $B_{3\rho/2}(z)\cap \partial\Omega\neq\emptyset$. By \eqref{rmk.bdry}, we have that for any $a\geq 2$ with $a\rho\leq 1$,
\begin{equation*}
     |B_{a\rho}(z)\setminus \Omega|>c|B_{a\rho}|
\end{equation*} holds for some constant $c=c(n,\Omega)$. If $1<a\rho\leq 4\mathrm{diam}(\Omega)$, then
\begin{equation*}
    |B_{a\rho}(z)\setminus \Omega|>|B_{\rho_0}(z)\setminus \Omega|\geq c_0|B_{\rho_0}|\geq c_0\left(\frac{\rho_0}{4\mathrm{diam}(\Omega)}\right)^n|B_{a\rho}|,
\end{equation*}
where the constant $\rho_0=\rho_0(\Omega)$ is determined in \eqref{rmk.bdry}. On the other hand, if $a\rho>4\mathrm{diam}(\Omega)$, then there is a point $y\in B_{a\rho}(z)$ such that $B_{a\rho/8}(y)\subset B_{a\rho}(z)$ but $B_{a\rho/8}(y)\cap \Omega=\emptyset$. Thus, we have 
\begin{equation*}
    |B_{a\rho}(z)\setminus \Omega|>|B_{a\rho/8}(y)\setminus \Omega|>1/8^n|B_{a\rho}|
\end{equation*}
whenever $a\rho>4\mathrm{diam}(\Omega)$. Combining all the three cases, we have  for any $a\geq2$
\begin{equation}\label{ineq.measden}
     |B_{a\rho}(z)\setminus \Omega|>c|B_{a\rho}|,
\end{equation}
where $c=c(n,\Omega)$.
Therefore, by \cite[Corollary 4.9]{Coz17} with $s$ and $p$ ,
replaced by $s+\tau-2\tau/q$ and $q$, respectively, we have for any $a\geq 2$,
\begin{equation}\label{ineq1.sob}
\begin{aligned}
    \left(\dashint_{B_{a\rho}(z)}|{w}|^2 \,dx\right)^{\frac12}&\leq c(a\rho)^{s+\tau-2\tau/q}\left(\int_{B_{a\rho}(z)}\dashint_{B_{a\rho}(z)}\frac{|{w}(x)-{w}(y)|^{q}}{|x-y|^{n+q(s+\tau)-2\tau}}\,dx\,dy\right)^{\frac1q}\\
    &\leq c(a\rho)^{2(s+\tau)}\left(\dashint_{\mathcal{B}_{a\rho}(z)}|W|^q\,d\mu\right)^{\frac1q}
\end{aligned}
\end{equation}
for some constant $c=c(\mathsf{data})$, where we have also used \eqref{cond.q.self} for the last inequality. Similarly, as in the proof of \cite[Theorem 3.2]{KuuMinSir15s} together with a suitable choice of testing function $w\phi^2$, where $\phi\in C^\infty_c(B_{15\rho/8}(z))$ with $\phi\equiv 1$ on $B_{7\rho/4}(z)$, we get
\begin{align*}
    \rho^{-n+2s}[w]^2_{W^{s,2}(B_{\rho}(z))}&\leq\rho^{-n+2s}[w]^2_{W^{s,2}(B_{7\rho/4}(z))}\\
    &\leq \frac{c}\sigma\dashint_{B_{2\rho}(z)}|w|^2\,dx+\sigma\left(\rho^{2s}\int_{\bbR^n\setminus B_{2\rho}(z)}\frac{|w|}{|y-z|^{n+2s}}\,dy\right)^2.
\end{align*}
We next follow the same computations as in the proof of \cite[Proposition 4.4]{KuuMinSir15s} along with \eqref{ineq1.sob} to see that \eqref{ger.ineq} holds.

Since we have a reverse H\"older's type inequality on the diagonal as in \eqref{ger.ineq}, by following the same lines as in the proof of \cite[Theorem 1.3]{KuuMinSir15s}, we obtain 
\begin{align*}
    \left(\dashint_{\mathcal{B}_{2R}}|W|^{2+\delta_1}\,d\mu\right)^{\frac1{2+\delta_1}}&\leq c\left[\sum_{k=1}^\infty 2^{-k(s-\tau)}\left(\dashint_{B_{3R\times 2^k}}|W|^2\,d\mu\right)^{\frac12}+R^{s-\tau}\left(\dashint_{\mathcal{B}_{3R}}F^2\,d\mu\right)^{\frac12}\right]
\end{align*}
for some constant $\delta_1=\delta_1(\mathsf{data})\in(0,1)$, where $c=c(\mathsf{data})$. After a few simple computations and using the fact that 
\begin{equation*}
    \sum_{k=1}^\infty 2^{-k(s-\tau)}\left(\dashint_{B_{3R\times 2^k}}|W|^2\,d\mu\right)^{\frac12}\leq cr^{-n/2}[w]_{W^{s,2}(\bbR^n)},
\end{equation*} we obtain 
\begin{align*}
    R^{s+\frac{n\delta_0}{2+2\delta_0}-\frac{n}{2}}[w]_{W^{s+\frac{n\delta_0}{2+2\delta_0},2+2\delta_0}(B_{2R})}&\leq cR^{-\frac{n}{2}}(R^s[w]_{W^{s,2}(B_{23R/8})}+\|w\|_{L^2(B_{23R/8})})\\
    &\quad+c\mathrm{Tail}(w;B_{5R/2}),
\end{align*}
where $c=c(\mathsf{data})$. We now use standard energy inequalities as in \cite[Theorem 3.2]{KuuMinSir15s} to get the desired estimate \eqref{ineq.self.1}.
\end{proof}

We end this section with the following local boundedness result up to the boundary.
\begin{lemma}\label{lem.locb}
    Let $v\in W^{s,2}(B_R(x_0))\cap L^1_{2s}(\bbR^n)$ be a weak solution to 
    \begin{equation*}
\left\{
\begin{alignedat}{3}
\mathcal{L}_Av&= 0&&\qquad \mbox{in  $\Omega \cap B_{R}(x_0)$}, \\
v&=0&&\qquad  \mbox{in $B_{R}(x_0)\setminus \Omega$}.
\end{alignedat} \right.
\end{equation*}
Then there is a constant $c=c(\mathsf{data})$ such that 
\begin{equation}\label{lem.locb.ineq}
    \|v\|_{L^\infty(B_{R/2}(x_0))}\leq c(R^{-n}\|v\|_{L^1(B_R(x_0))}+\mathrm{Tail}(v;B_R(x_0))).
\end{equation}
\end{lemma}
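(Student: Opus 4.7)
\smallskip

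The approach is a De Giorgi truncation/iteration argument adapted to the nonlocal Dirichlet setting. After scaling I may assume $x_0 = 0$ and $R = 1$; by symmetry it suffices to bound $v_+$, the same argument applied to $-v$ delivering the bound on $v_-$. The three ingredients I plan to combine are: (i) a Caccioppoli-type inequality for truncations $(v-k)_+$ with the long-range interaction carefully accounted for, (ii) the fractional Sobolev embedding up to the boundary, available because $v \equiv 0$ in $B_1 \setminus \Omega$ together with the uniform measure density \eqref{rmk.bdry}, and (iii) the classical geometric iteration lemma of De Giorgi.

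For the Caccioppoli step, I would fix a level $k \geq 0$, radii $\tfrac12 \leq r < \rho \leq 1$, and a cutoff $\phi \in C_c^\infty(B_{(r+\rho)/2})$ with $\phi \equiv 1$ on $B_r$ and $|\nabla \phi| \leq 4/(\rho - r)$. Since $v = 0$ in $B_1 \setminus \Omega$, the test function $\psi = (v-k)_+ \phi^2$ lies in $X_0^{s,2}(\Omega \cap B_1)$, so plugging it into the weak formulation and exploiting the symmetry and ellipticity \eqref{cond.kernel} of $A$, along the lines of \cite{DicKuuPal} and \cite[Theorem~3.2]{KuuMinSir15s}, one arrives at
\[
    [(v-k)_+ \phi]_{W^{s,2}(\bbR^n)}^{2} \leq \frac{c}{(\rho-r)^{2s}}\,\|(v-k)_+\|_{L^2(B_\rho)}^2 + c\,\mathcal{T}(k;\rho,r)\,\|(v-k)_+\|_{L^1(B_\rho)},
\]
where the tail factor is split as $\mathcal{T}(k;\rho,r) \leq c(\rho-r)^{-n-2s}\mathrm{Tail}(v_+;B_1) + c(\rho-r)^{-n-2s}\|v_+\|_{L^1(B_1)}$ via the standard decomposition of $\bbR^n \setminus B_\rho$ into a near piece (absorbed through $|x-y| \geq (\rho-r)/2$) and the far piece comparable with $|y|^{-n-2s}$.

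Next I would invoke the fractional Sobolev-Poincar\'e embedding in the form of \cite[Lemma~2.4]{KimWei24} (as used in the proof of Lemma \ref{lem.sobpoi.ds}): because $(v-k)_+ \phi$ vanishes outside $\Omega \cap B_1$ and \eqref{rmk.bdry} provides $|B_\rho \setminus \Omega| \geq c_1 |B_\rho|$, one obtains
\[
    \|(v-k)_+\|_{L^{2^{*}_s}(B_r)}^{2} \leq c\,[(v-k)_+ \phi]_{W^{s,2}(\bbR^n)}^{2}, \qquad 2^{*}_s := \frac{2n}{n-2s}.
\]
Choosing the sequences $r_i = \tfrac12 + 2^{-i-1}$ and $k_i = k(1 - 2^{-i})$ and setting $A_i := \int_{B_{r_i}} (v-k_i)_+^2 \, dx$, the two displays combine, after H\"older's inequality on the super-level set $\{v > k_i\} \cap B_{r_i}$, into a recursion of the form $A_{i+1} \leq C b^i k^{-\gamma} A_i^{1+\gamma}$ for some $\gamma = \gamma(n,s) > 0$ and universal $b > 1$. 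The classical De Giorgi iteration lemma then forces $A_\infty = 0$ whenever $k$ dominates a universal multiple of $\|v\|_{L^1(B_1)} + \mathrm{Tail}(v; B_1)$, which gives the required bound on $v_+$ in $B_{1/2}$, hence \eqref{lem.locb.ineq}.

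The main technical obstacle is the careful bookkeeping of the long-range tail in the Caccioppoli step: the factor $(\rho-r)^{-n-2s}$ produced when estimating $|x-y|^{-n-2s}$ on the near annulus $B_1 \setminus B_\rho$ must be compatible with a geometric iteration, and the tail piece coming from $\bbR^n \setminus B_1$ must be isolated cleanly so that it is linear (not superlinear) in $\mathrm{Tail}(v;B_1)$ on the right-hand side. This is well understood in the interior case (cf.~\cite{DicKuuPal}), and by virtue of the measure density property \eqref{rmk.bdry} together with $v \equiv 0$ in $B_1 \setminus \Omega$, the boundary adaptation goes through essentially verbatim.
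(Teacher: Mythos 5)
Your overall strategy (boundary Caccioppoli for truncations, fractional Sobolev embedding for the zero-extended truncations, De Giorgi iteration) is a legitimate, more self-contained route than the paper's, which simply quotes the up-to-the-boundary local boundedness of \cite{KorKuuPal16} and then lowers the exponent following \cite[Corollary 2.1]{KuuMinSir15s}. The Caccioppoli step and the Sobolev step are set up correctly (note the measure density \eqref{rmk.bdry} is not even needed for the Sobolev inequality, since $(v-k)_+\phi$ is a compactly supported $W^{s,2}(\bbR^n)$ function), and the tail bookkeeping you describe is the standard one.

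However, there is a genuine gap at the final step. With $A_i:=\int_{B_{r_i}}(v-k_i)_+^2\,dx$, the recursion you derive has the form $A_{i+1}\le C b^i k^{-2\gamma}A_i^{1+\gamma}$ (after using $k\ge \mathrm{Tail}(v_+;B_1)+\|v_+\|_{L^1(B_1)}$ to absorb the tail coefficient), and the fast geometric convergence lemma then requires the smallness condition $A_0\le c_0\,k^{2}$, i.e. $k\gtrsim \|v_+\|_{L^2(B_1)}$. So the iteration alone yields
\begin{equation*}
  \|v_+\|_{L^\infty(B_{1/2})}\le c\left(\|v_+\|_{L^2(B_{3/4})}+\|v_+\|_{L^1(B_1)}+\mathrm{Tail}(v_+;B_1)\right),
\end{equation*}
not the claimed bound with $k$ dominating a multiple of $\|v\|_{L^1(B_1)}+\mathrm{Tail}(v;B_1)$: the $L^2$ norm cannot be replaced by the $L^1$ norm for free. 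To reach \eqref{lem.locb.ineq} you still need the standard interpolation--absorption step: prove the $L^\infty$--$L^2$ estimate on arbitrary nested balls $B_r\subset B_\rho$ with the correct $(\rho-r)$-dependence, use $\|v_+\|_{L^2(B_\rho)}\le \epsilon\|v_+\|_{L^\infty(B_\rho)}+c_\epsilon\|v_+\|_{L^1(B_\rho)}$, and then apply the iteration lemma on radii (or a covering argument) to absorb $\epsilon\|v_+\|_{L^\infty}$. This is exactly the content of \cite[Corollary 2.1]{KuuMinSir15s} that the paper invokes; without it, your last sentence asserts more than the recursion delivers.
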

\begin{proof}
    By \cite[Theorem 2 and Theorem 5]{KorKuuPal16}, we have 
    \begin{equation*}
    \|v\|_{L^\infty(B_{r/2}(z))}\leq c(r^{-n/2}\|v\|_{L^2(B_r(z))}+\mathrm{Tail}(v;B_r(z)))
    \end{equation*}
    for some constant $c=c(\mathsf{data})$, whenever $z\in \overline{\Omega}\cap B_{3R/4}(x_0)$ and $r\leq R/4$. Therefore, by following the same lines as the proof of \cite[Corollary 2.1]{KuuMinSir15s}, we get \eqref{lem.locb.ineq}.
\end{proof}

\section{Comparison estimates}\label{sec.comp}
In this section, we provide several comparison estimates.

In light of the localization argument as in Lemma \ref{lem.localization}, we may assume that $u\in W^{s,2}(\bbR^n)$ is a solution to 
\begin{equation}\label{eq: loc.comp}
\left\{
\begin{alignedat}{3}
\mathcal{L}_Au&= f&&\qquad \mbox{in  $\Omega \cap B_{R}(x_0)$}, \\
u&=0&&\qquad  \mbox{in $B_{R}(x_0)\setminus \Omega$},
\end{alignedat} \right.
\end{equation}
where $f\in L^{2_*}(\Omega\cap B_{R}(x_0))$ with $R\leq 1$.

Let us fix $B_{4r}(z)\Subset B_R(x_0)$ with $z\in\overline{\Omega}$. We present our first comparison estimate below.
\begin{lemma}\label{comp1.conti}
Let $w\in W^{s,2}(\bbR^n)$ be the weak solution to 
\begin{equation}\label{eq.comp.w}
\left\{
\begin{alignedat}{3}
\mathcal{L}_Aw&= 0&&\qquad \mbox{in  $\Omega \cap B_{2r}(z)$}, \\
w&=u&&\qquad  \mbox{in $\mathbb{R}^n\setminus(\Omega\cap B_{2r}(z))$}.
\end{alignedat} \right.
\end{equation}
 Then 
 \begin{align}\label{ineq3.comp1.conti}
          r^{-n}[u-w]^2_{W^{s,2}(\bbR^n)}\leq c\left(\dashint_{ \Omega\cap B_{2r}(z)}(r^{s}|f|)^{2_*}\,dx\right)^{\frac2{2_*}}
     \end{align}
     for some constant $c=c(\mathsf{data})$.
\end{lemma}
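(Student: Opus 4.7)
The plan is to subtract the weak formulations satisfied by $u$ and $w$ and test the resulting identity against $\psi = u-w$ itself, which is admissible because $w \equiv u$ outside $\Omega \cap B_{2r}(z)$ forces $u - w \in X_0^{s,2}(\Omega\cap B_{2r}(z))$.

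Concretely, since $B_{2r}(z) \Subset B_R(x_0)$, every $\psi \in X_0^{s,2}(\Omega\cap B_{2r}(z))$ is admissible in both weak formulations, and subtracting them produces
\begin{equation*}
\int_{\bbR^n}\int_{\bbR^n}\frac{((u-w)(x)-(u-w)(y))(\psi(x)-\psi(y))}{|x-y|^{n+2s}}A(x,y)\,dx\,dy = \int_{\Omega\cap B_{2r}(z)} f\psi\,dx.
\end{equation*}
Inserting $\psi = u-w$ and using the lower bound $A \geq \Lambda^{-1}$ from \eqref{cond.kernel} on the left gives
\begin{equation*}
\Lambda^{-1}[u-w]_{W^{s,2}(\bbR^n)}^2 \leq \int_{\Omega\cap B_{2r}(z)} f(u-w)\,dx.
\end{equation*}

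For the right-hand side I would apply H\"older's inequality with the conjugate pair $(2_*,2^*_s)$, where $2^*_s \coloneqq \frac{2n}{n-2s}$ is the fractional Sobolev exponent and the identity $\frac{1}{2_*} + \frac{1}{2^*_s}=1$ holds precisely because $2_* = \frac{2n}{n+2s}$. Then I invoke the standard fractional Sobolev embedding $\|u-w\|_{L^{2^*_s}(\bbR^n)} \leq c(n,s)\,[u-w]_{W^{s,2}(\bbR^n)}$, valid for the compactly supported function $u-w \in W^{s,2}(\bbR^n)$. Dividing through by $[u-w]_{W^{s,2}(\bbR^n)}$ yields
\begin{equation*}
[u-w]_{W^{s,2}(\bbR^n)} \leq c\,\|f\|_{L^{2_*}(\Omega\cap B_{2r}(z))}.
\end{equation*}

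Finally, to reach the averaged form \eqref{ineq3.comp1.conti}, I would square, multiply by $r^{-n}$, and use $|\Omega \cap B_{2r}(z)| \simeq r^n$ from \eqref{rmk.bdry} together with the arithmetic identity $\frac{2n}{2_*} = n+2s$, which gives
\begin{equation*}
r^{-n}\|f\|_{L^{2_*}(\Omega\cap B_{2r}(z))}^2 \leq c\,r^{2s}\left(\dashint_{\Omega\cap B_{2r}(z)} |f|^{2_*}\,dx\right)^{2/2_*} = c\left(\dashint_{\Omega\cap B_{2r}(z)} (r^{s}|f|)^{2_*}\,dx\right)^{2/2_*}.
\end{equation*}
I do not foresee a real obstacle here: the argument is a direct energy comparison, and the exponents align perfectly because $2_*$ is exactly the H\"older dual of the fractional Sobolev exponent. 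The only steps requiring brief verification are the admissibility of $u-w$ as a test function and the scale invariance of the Sobolev constant when written in seminorm form; both are standard.
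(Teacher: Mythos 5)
Your proposal is correct and follows essentially the same route as the paper: test the equation $\mathcal{L}_A(u-w)=f$ in $\Omega\cap B_{2r}(z)$ with $u-w\in X_0^{s,2}(\Omega\cap B_{2r}(z))$, control the right-hand side by duality between $L^{2_*}$ and the fractional Sobolev exponent, and rescale using $|\Omega\cap B_{2r}(z)|\simeq r^n$. The only cosmetic difference is that you use H\"older plus the global embedding $\|u-w\|_{L^{2^*_s}(\bbR^n)}\leq c[u-w]_{W^{s,2}(\bbR^n)}$ and divide by the seminorm (trivially fine when it vanishes), whereas the paper uses Young's inequality and absorbs half of the energy, citing the Sobolev embedding of \cite{Coz17}.
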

\begin{proof}
By \cite[Proposition 2.12]{BraLinSch18}, let  $w\in X^{s,2}_{u}(\Omega\cap B_{2r}(z),B_{3r}(z))$ be the weak solution to \eqref{eq.comp.w}. Since $u\in W^{s,2}(\bbR^n)$, we have $w\in W^{s,2}(\bbR^n)$ and $u-w\in X_0^{s,2}(\Omega\cap B_{2r}(z))$. Thus, by testing $u-w$ to 
    \begin{equation*}
        \mathcal{L}_A(u-w)=f\quad\text{in }\Omega\cap B_{2r}(z),  
    \end{equation*}
     we have 
     \begin{align*}
         [u-w]^2_{W^{s,2}(\bbR^n)}\leq c\int_{\Omega\cap B_{2r}(z)}|f||u-w|\,dx
     \end{align*}
     where $c=c(\mathsf{data})$. We now use Young's inequality and the Sobolev embedding as in \cite[Lemma 4.8]{Coz17} to obtain
     \begin{align}\label{ineq3.comp1.cont.aux}
          r^{-n}[u-w]^2_{W^{s,2}(\bbR^n)}\leq  \frac{r^{-n}}{2}[u-w]^2_{W^{s,2}(\bbR^n)}+c\left(\dashint_{\Omega\cap B_{2r}(z)}(r^{s}|f|)^{2_*}\,dx\right)^{\frac2{2_*}}
     \end{align}
     for some constant $c=c(\mathsf{data})$, where we have used \eqref{rmk.bdry} to see that 
     \begin{equation*}
         r^n/c\leq|\Omega\cap B_{2r}(z)|\leq  cr^n.
     \end{equation*}
 From \eqref{ineq3.comp1.cont.aux}, the conclusion of the lemma follows immediately. 
\end{proof}

We next provide a second comparison estimate.
\begin{lemma}\label{comp2.conti}
    Let $v\in W^{s,2}(\bbR^n)$ be a weak solution to 
    \begin{equation}
\label{eq.comp.v}
\left\{
\begin{alignedat}{3}
\mathcal{L}_{a_0}v&= 0&&\qquad \mbox{in  $\Omega \cap B_{r}(z)$}, \\
v&=w&&\qquad  \mbox{in $\bbR^n\setminus (\Omega\cap B_{r}(z) )$},
\end{alignedat} \right.
\end{equation}
where $a_0$ is a kernel coefficient satisfying \eqref{cond.kernel} and $w$ is the solution to \eqref{eq.comp.w}. Then, we have 
\begin{equation}\label{ineq3.comp2}
    \begin{aligned}
        &r^{-n}[w-v]_{W^{s,2}(\bbR^n)}^2\\
        &\leq c\big( r^{-2n}\|w/r^s\|^2_{L^{1}(B_{2r}(z))}+r^{-2s}\mathrm{Tail}(w;B_{2r}(z))^2 \big)r^{-\frac{2n\delta_0}{1+\delta_0}}\|A-a_0\|_{L^{\frac{2(1+\delta_0)}{\delta_0}}(B_{2r}(z)\times B_{2r}(z))}^{2}\\
        &\hspace{0.2pt}+cr^{4s}\left(\sup_{x\in B_r(z)}\int_{\bbR^n\setminus B_{2r}(z)}\frac{|A-a_0|}{|y-z|^{n+2s}}\,dy\right)^2 \big(r^{-2n}\|w/r^s\|^2_{L^{1}(B_{2r}(z))}+r^{-2s}\mathrm{Tail}(w;B_{2r}(z))^2\big)\\
        &+ cr^{2s}\left(\sup_{x\in B_r(z)}\int_{\bbR^n\setminus B_{2r}(z)}\frac{|w(y)||A-a_0|}{|y-z|^{n+2s}}\,dy\right)^2,
    \end{aligned}
    \end{equation}
where the constant $\delta_0=\delta_0(\mathsf{data})\in(0,1)$ is determined in Lemma \ref{lem.self}.
\end{lemma}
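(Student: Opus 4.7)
The plan is to test the difference equation with $\psi := w - v$, which lies in $X_0^{s,2}(\Omega \cap B_r(z))$ since $w = v$ outside this set. Subtracting the weak formulations of $\mathcal{L}_A w = 0$ and $\mathcal{L}_{a_0} v = 0$ tested with $\psi$, and using the ellipticity bound $a_0 \geq \Lambda^{-1}$, yields
\begin{equation*}
[\psi]_{W^{s,2}(\bbR^n)}^2 \lesssim J := \bigg| \iint_{\bbR^n \times \bbR^n} \frac{(w(x)-w(y))(\psi(x)-\psi(y))(a_0 - A)(x,y)}{|x-y|^{n+2s}} \, dx \, dy \bigg|.
\end{equation*}
Because $\psi$ vanishes outside $B_r(z)$, only pairs with at least one coordinate in $B_r(z)$ contribute. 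I would split $J$ into a near-diagonal piece $J_{\mathrm{in}}$ over $B_{2r}(z) \times B_{2r}(z)$ and a tail piece $J_{\mathrm{out}}$ over $B_r(z) \times (\bbR^n \setminus B_{2r}(z))$ together with its symmetric image, exploiting $|x-y| \geq |y-z|/2$ on the latter region.

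For $J_{\mathrm{in}}$, I would apply a three-factor H\"older inequality with exponents $(2+2\delta_0,\,2,\,2(1+\delta_0)/\delta_0)$, distributing the singular weight $|x-y|^{-(n+2s)}$ so that the $w$-factor equals $[w]_{W^{s',p}}$ with $p = 2+2\delta_0$ and $s' = s + \frac{n\delta_0}{2+2\delta_0}$, the $\psi$-factor equals $[\psi]_{W^{s,2}}$, and the coefficient factor equals $\|A-a_0\|_{L^{2(1+\delta_0)/\delta_0}}$ against plain Lebesgue measure. Lemma~\ref{lem.self} applied with $R = r/2$ (whose hypotheses hold since $u \equiv 0$ in $B_R(x_0) \setminus \Omega$ and $B_{2r}(z) \subset B_R(x_0)$) upgrades $w$ on $B_r(z)$ to $W^{s',p}$ with norm dominated, after adjusting for the scaling factor $r^{n/2-s'}$, by $r^{s-n/2}\|w\|_{L^2(B_{3r/2}(z))} + \mathrm{Tail}(w; B_{3r/2}(z))$; Lemma~\ref{lem.locb}, applied on a finite cover of $B_{3r/2}(z)$ by balls of radius $r/4$ on which $\mathcal{L}_A w = 0$ still holds, then converts the $L^2$ norm into $L^\infty$ and finally $L^1$, producing the target quantity $r^{-n}\|w\|_{L^1(B_{2r}(z))} + \mathrm{Tail}(w; B_{2r}(z))$. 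Tracking the net power of $r$ yields the prefactor $r^{-n\delta_0/(1+\delta_0)}$ attached to $\|A-a_0\|_{L^{2(1+\delta_0)/\delta_0}}^2$ in the first term of the stated bound.

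For $J_{\mathrm{out}}$ I would use $|w(x)-w(y)| \leq |w(x)|+|w(y)|$ to split into two pieces. The $|w(x)|$-piece factors as $S_1 \int_{B_r(z)}|w\psi|\,dx$ where $S_1 := \sup_{x \in B_r(z)} \int_{B_{2r}(z)^c} |A-a_0|(x,y) |y-z|^{-(n+2s)}\,dy$; bounding $\int|w\psi| \leq \|w\|_{L^2(B_r(z))}\|\psi\|_{L^2(B_r(z))} \lesssim r^s \|w\|_{L^2(B_r(z))}[\psi]_{W^{s,2}}$ via fractional Poincar\'e on the compactly supported $\psi$, and then controlling $\|w\|_{L^2(B_r(z))}$ by $r^{-n/2}\|w\|_{L^1(B_{2r}(z))} + r^{n/2}\mathrm{Tail}(w;B_{2r}(z))$ via Lemma~\ref{lem.locb}, matches the second term of the bound. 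The $|w(y)|$-piece factors as $S_2 \|\psi\|_{L^1(B_r(z))}$ with $S_2$ being the third supremum appearing in the statement, and $\|\psi\|_{L^1(B_r(z))} \lesssim r^{n/2+s}[\psi]_{W^{s,2}}$ matches the third term. Every resulting bound is of the form $[\psi]_{W^{s,2}} \cdot (\text{constant})$; Young's inequality absorbs the $[\psi]_{W^{s,2}}$ factor on the right into the $[\psi]^2_{W^{s,2}}$ on the left, and dividing by $r^n$ yields \eqref{ineq3.comp2}.

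The main technical obstacle is the mismatch between where Lemma~\ref{lem.self} gives higher integrability of $w$ (only on $B_r(z)$) and where $J_{\mathrm{in}}$ integrates (on $B_{2r}(z) \times B_{2r}(z)$). I would bridge this by further splitting $J_{\mathrm{in}}$ into a bulk part on $B_r(z) \times B_r(z)$, on which the H\"older argument above goes through cleanly, and an annular remainder on $(B_{2r}(z) \setminus B_r(z)) \times B_r(z)$ plus its symmetric image, on which $\psi$ vanishes at one argument so that the estimate reduces to an $L^\infty$-bound of $w$ multiplied by a local $L^{2(1+\delta_0)/\delta_0}$ norm of $A-a_0$ and is absorbed exactly as in $J_{\mathrm{out}}$. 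Since $\|A-a_0\|_{L^{2(1+\delta_0)/\delta_0}(B_r \times B_r)} \leq \|A-a_0\|_{L^{2(1+\delta_0)/\delta_0}(B_{2r} \times B_{2r})}$ by monotonicity, enlarging the ball to $B_{2r}(z)$ in the final statement is harmless.
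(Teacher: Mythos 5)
Your overall strategy is the paper's: test with $\psi=w-v$, split into a near-diagonal and a far region, run a H\"older argument with exponents $(2+2\delta_0,2,2(1+\delta_0)/\delta_0)$ (equivalent to the paper's Young-then-H\"older), and convert the resulting norms of $w$ via Lemma \ref{lem.self} and Lemma \ref{lem.locb}. The far region and the bulk $B_r(z)\times B_r(z)$ piece are handled correctly. The genuine gap is your treatment of the annular remainder on $(B_{2r}(z)\setminus B_r(z))\times B_r(z)$. There the kernel is still fully singular: $|x-y|$ can be arbitrarily small when both points approach $\partial B_r(z)$, so the claim that the term "reduces to an $L^\infty$-bound of $w$ multiplied by a local $L^{2(1+\delta_0)/\delta_0}$ norm of $A-a_0$ and is absorbed exactly as in $J_{\mathrm{out}}$" fails: $J_{\mathrm{out}}$ relies crucially on $|x-y|\geq r/2$, which is unavailable here. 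Concretely, after bounding $|w(x)-w(y)|\leq 2\|w\|_{L^\infty}$ you are left with $\int_{B_r(z)}\int_{B_{2r}(z)\setminus B_r(z)}|\psi(y)|\,|A-a_0|\,|x-y|^{-(n+2s)}\,dx\,dy$, whose $x$-integral behaves like $\mathrm{dist}(y,\partial B_r(z))^{-2s}$; since $\psi\in W^{s,2}$ has no pointwise decay at $\partial B_r(z)$ from inside and $|A-a_0|$ has no pointwise smallness, extracting an $L^{2(1+\delta_0)/\delta_0}$ norm of $A-a_0$ by H\"older only worsens the singularity, and the remaining factor is not controlled by the quantities in \eqref{ineq3.comp2}. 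In short, on the singular part of the annulus you still need the oscillation $|w(x)-w(y)|$ paired with the improved seminorm of $w$, not just $\|w\|_{L^\infty}$.

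The repair is exactly the paper's choice of splitting radius: cut the near/far decomposition at $B_{3r/2}(z)$ rather than $B_{2r}(z)$ (or, equivalently, further split your annulus according to whether the outer point lies in $B_{3r/2}(z)$ or in $B_{2r}(z)\setminus B_{3r/2}(z)$). Pairs with both points in $B_{3r/2}(z)$ are handled by the trilinear H\"older with $[w]_{W^{s+\frac{n\delta_0}{2+2\delta_0},2+2\delta_0}(B_{3r/2}(z))}$, which is available from Lemma \ref{lem.self} (applied at scale $4R=2r$, together with a routine covering to pass from $B_r(z)$ to $B_{3r/2}(z)$, using that $\mathcal{L}_Aw=0$ in $\Omega\cap B_{2r}(z)$ and $w=0$ in $B_{2r}(z)\setminus\Omega$); pairs with the outer point beyond $B_{3r/2}(z)$ satisfy $|x-y|\geq r/2$ and go into the tail-type terms, exactly as in your $J_{\mathrm{out}}$. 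With that modification your argument coincides with the paper's proof.
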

\begin{proof}
We may assume $z=0$. By testing $w-v$ to
    \begin{equation*}
        \mathcal{L}_{A-a_0}w+\mathcal{L}_{a_0}(w-v)=0\quad\text{in }\Omega\cap B_r,
    \end{equation*}
    we get 
    \begin{align*}
         [w-v]_{W^{s,2}(\bbR^n)}^2
         &\leq c\int_{B_{3r/2}}\int_{B_{3r/2}}\frac{|w(x)-w(y)||(w-v)(x)-(w-v)(y)||A-a_0|}{|x-y|^{n+2s}}\,dx\,dy\\
         &\quad+c\int_{\bbR^n\setminus B_{3r/2}}\int_{B_{r}}\frac{|w(x)-w(y)||(w-v)(x)||A-a_0|}{|x-y|^{n+2s}}\,dx\,dy\eqqcolon J_1+J_2.
    \end{align*}
With the help of Young's inequality and H\"older's inequality, we now estimate $J_1$ as
    \begin{align*}
       J_1 &\leq \frac{[w-v]_{W^{s,2}(\bbR^n)}}{8}+c\int_{B_{3r/2}}\int_{B_{3r/2}}\frac{|w(x)-w(y)|^2|A-a_0|^2}{|x-y|^{n+2s}}\,dx\,dy\\
        &\leq \frac{[w-v]_{W^{s,2}(\bbR^n)}}8+c[w]^{2}_{W^{s+\frac{n\delta_0}{2+2\delta_0},2+2\delta_0}(B_{3r/2})}\left(\int_{B_{3r/2}} \int_{B_{3r/2}}|A-a_0|^{\frac{2(1+\delta_0)}{\delta_0}}\,dx\,dy\right)^{\frac{\delta_0}{1+\delta_0}}
    \end{align*}
    for some constant $c=c(\mathsf{data})$.
    By Lemma \ref{lem.self}, we further simplify the above display as
    \begin{equation}\label{ineq1.lem.comp2}
    \begin{aligned}
        J_1&\leq \frac{[w-v]_{W^{s,2}(\bbR^n)}}8+c\left(\|w\|^2_{L^{2}(B_{7r/4})}+r^{n-2s}\mathrm{Tail}(w;B_{7r/4})^2\right)r^{-\frac{2n\delta_0}{1+\delta_0}}\|A-a_0\|_{L^{\frac{2(1+\delta_0)}{\delta_0}}(B_{2r}\times B_{2r})}^{2}
    \end{aligned}
    \end{equation}
    for some constant $c=c(\mathsf{data})$.
    We next note $|x-y|\geq |y|/2$ for any $y\in\bbR^n\setminus B_{3r/2}$ and $x\in B_{r}$. Thus, 
    \begin{align*}
        J_2&\leq c\int_{B_r}|w(x)||(w-v)(x)|\int_{\bbR^n\setminus B_{3r/2}}\frac{|A-a_0|}{|y|^{n+2s}}\,dx\,dy\\
        &\quad+c\int_{B_r}|(w-v)(x)|\int_{\bbR^n\setminus B_{3r/2}}\frac{|w(y)||A-a_0|}{|y|^{n+2s}}\,dx\,dy\eqqcolon J_{2,1}+J_{2,2}.
    \end{align*}
    We first estimate $J_{2,1}$ as 
    \begin{align*}
        J_{2,1}&\leq c\int_{B_r}|w(x)||(w-v)(x)|\int_{B_{2r}\setminus B_{3r/2}}\frac{|A-a_0|}{|y|^{n+2s}}\,dx\,dy\\
        &\quad+c\int_{B_r}|w(x)||(w-v)(x)|\int_{\bbR^n\setminus B_{2r}}\frac{|A-a_0|}{|y|^{n+2s}}\,dx\,dy\coloneqq J_{2,1,1}+J_{2,1,2}.
    \end{align*}
    By Young's inequality, H\"older's inequality and the Sobolev embedding, we have
    \begin{align*}
        J_{2,1,1}&\leq \frac{c}{r^{2s}}\int_{B_r}|w(x)|^2\dashint_{B_{2r}}|A-a_0|^2\,dy\,dx+\frac{[w-v]_{W^{s,2}(\bbR^n)}^2}{8}\\
        &\leq \frac{cr^{\frac{n\delta_0}{1+\delta_0}
        }}{r^{2s}}\|w\|^{2}_{L^{2+2\delta_0}(B_{r})}\left(\dashint_{B_{2r}}\dashint_{B_{2r}}|A-a_0|^{\frac{2(1+\delta_0)}{\delta_0}}\,dy\,dx\right)^{\frac{\delta_0}{1+\delta_0}}+\frac{[w-v]_{W^{s,2}(\bbR^n)}^2}{8}\\
        &\leq c[w]^{2}_{W^{s+\frac{n\delta_0}{2+2\delta_0},2+2\delta_0}(B_{r})}\left(\int_{B_{2r}} \int_{B_{2r}}|A-a_0|^{\frac{2(1+\delta_0)}{\delta_0}}\,dx\,dy\right)^{\frac{\delta_0}{1+\delta_0}}+\frac{[w-v]_{W^{s,2}(\bbR^n)}^2}{8},
    \end{align*}
    where $c=c(\mathsf{data})$. Thus, as in the estimate of \eqref{ineq1.lem.comp2}, we obtain 
    \begin{align*}
        J_{2,1,1}\leq \frac{[w-v]^2_{W^{s,2}(\bbR^n)}}8+c[\|w\|^2_{L^{2}(B_{7r/4})}+r^{n-2s}\mathrm{Tail}(w;B_{7r/4})^2]r^{-\frac{2n\delta_0}{1+\delta_0}}\|A-a_0\|_{L^{\frac{2(1+\delta_0)}{\delta_0}}(B_{2r}\times B_{2r})}^{2}
    \end{align*}
    for some constant $c=c(\mathsf{data})$.
 Using Young's inequality and the Sobolev embedding, we next estimate $J_{2,1,2}$ as 
    \begin{align*}
        J_{2,1,2}&\leq cr^{2s}\left(\sup_{x\in B_r}\int_{\bbR^n\setminus B_{2r}}\frac{|A-a_0|}{|y|^{n+2s}}\,dy\right)^2\|w\|^2_{L^2(B_r)}+\frac{[w-v]^2_{W^{s,2}(\bbR^n)}}8.
    \end{align*}
    Similarly, we also have 
    \begin{align*}
        J_{2,2}&\leq c\|w\|^2_{L^{2}(B_{7r/4})}r^{-\frac{2n\delta_0}{1+\delta_0}}\|A-a_0\|_{L^{\frac{2(1+\delta_0)}{\delta_0}}(B_{2r}\times B_{2r})}^{2}\\
        &\quad +cr^{2s+n}\left(\sup_{x\in B_r}\int_{\bbR^n\setminus B_{2r}}\frac{|A-a_0||w(y)|}{|y|^{n+2s}}\,dy\right)^2+\frac{[w-v]^2_{W^{s,2}(\bbR^n)}}4.
    \end{align*}
    By combining all the estimates $J_1$, $J_{2,1,1}$ $J_{2,1,2}$ and $J_{2,2}$ together with the fact that 
    \begin{equation*}
       \|w\|_{L^2(B_{7r/4})}^2+r^{n}\mathrm{Tail}(w;B_{7r/4})^2\leq c( r^{-n}\|w\|^2_{L^{1}(B_{2r})}+r^{n}\mathrm{Tail}(w;B_{2r})^2),
    \end{equation*}
    which follows from Lemma \ref{lem.locb}, we get the desired estimate \eqref{ineq3.comp2}.
\end{proof}

\section{The case of discontinuous coefficients}
In this section, we prove Theorem \ref{thm.vmo}.
Let us fix a weak solution $u\in W^{s,2}(B_1)\cap L^1_{2s}(\bbR^n)$ to \eqref{eq: defn}. By the localization argument, we see the following remark.
\begin{remark}\label{rmk.loc.sec4}
By Remark \ref{rmk.localization}, we observe that there is a weak solution $\widetilde{u}\in W^{s,2}(\bbR^n)$ to 
\begin{equation}\label{eq.sec4}
\left\{
\begin{alignedat}{3}
\mathcal{L}_{{A}}\widetilde{u}&= f+g_2&&\qquad \mbox{in  $\Omega \cap B_{3/4}$}, \\
\widetilde{u}&=0&&\qquad  \mbox{in $B_{3/4}\setminus \Omega$},\\
\widetilde{u}&=u&&\qquad  \mbox{in $B_{15/16}$},
\end{alignedat} \right.
\end{equation}    
where 
\begin{equation}\label{ineq.g2.sec4}
    \|g_2\|_{L^\infty(B_{3/4})}\leq c\mathrm{Tail}(u;B_{15/16}))
\end{equation}
for some constant $c=c(n,s,\Lambda)$.
\end{remark}
\subsection{Localized problems.} In this subsection, on account of Remark \ref{rmk.loc.sec4}, we fix a weak solution $u\in W^{s,2}(\bbR^n)$ to 
\begin{equation*}
\left\{
\begin{alignedat}{3}
\mathcal{L}_{{A}}{u}&= f&&\qquad \mbox{in  $\Omega \cap B_{3/4}$}, \\
{u}&=0&&\qquad  \mbox{in $B_{3/4}\setminus \Omega$}.
\end{alignedat} \right.
\end{equation*}   

We first provide the following weighted H\"older estimates for homogeneous problems with a locally constant kernel coefficient.
\begin{lemma}\label{lem.loc.const}
Let $z\in {\Omega}\cap B_{1/4}$ and $r\in(0,1/64]$. Let $v\in W^{s,2}(\bbR^n)$ be a weak solution to
\begin{equation}
\label{eq: homo}
\left\{
\begin{alignedat}{3}
\mathcal{L}_{a_{0}}v&= 0&&\qquad \mbox{in  $\Omega \cap B_{r}(z)$}, \\
v&=0&&\qquad  \mbox{in $B_{r}(z)\setminus \Omega$},
\end{alignedat} \right.
\end{equation}
where 
\begin{equation}\label{defn.a0.vmo}
\begin{aligned}
        a_0(x,y)\coloneqq\begin{cases} (A)_{B_{2r}(z)}&\quad\text{if }(x,y)\in B_{2r}(z)\times B_{2r}(z),\\
        A(x,y)&\quad\text{if }(x,y)\in \bbR^n\times\bbR^n\setminus (B_{2r}(z)\times B_{2r}(z)).
        \end{cases}
    \end{aligned}
    \end{equation}
Then we have 
\begin{equation*}
    r^s\|v/d^s\|_{L^\infty(\Omega \cap B_{r/4}(z))}\leq c\big(r^{-n}\|v\|_{L^1(B_{r}(z))}+\mathrm{Tail}(v;B_{r}(z))\big)
\end{equation*}
for some constant $c=c(\mathsf{data})$.
\end{lemma}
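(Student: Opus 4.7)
The plan is to reduce the problem to a fractional Laplacian (constant-coefficient) equation with a bounded right-hand side, and then invoke classical boundary regularity for $v/d^s$.

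\textbf{Reduction to unit scale.} The estimate is scale-invariant under $\tilde v(x):=v(z+rx)$: one checks that $r^{-n}\|v\|_{L^1(B_r(z))}$ and $\mathrm{Tail}(v;B_r(z))$ are the natural scaling-invariant quantities on the right-hand side, while $r^s\|v/d^s\|_{L^\infty(\Omega\cap B_{r/4}(z))}$ equals $\|\tilde v/\tilde d^s\|_{L^\infty(\tilde\Omega\cap B_{1/4})}$, where $\tilde\Omega=(\Omega-z)/r$ is still a $C^{1,\alpha}$-domain (with better constants). Moreover $\tilde a_0$ retains the same structure: it is constant equal to $c_0:=(A)_{B_{2r}(z)}\in[\Lambda^{-1},\Lambda]$ on $B_2\times B_2$ and uniformly elliptic outside. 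Hence it suffices to prove the estimate for $r=1$.

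\textbf{Rewriting as a constant-kernel equation with bounded right-hand side.} For any $\psi\in X_0^{s,2}(\Omega\cap B_{1/2}(z))$, $\psi$ vanishes outside $B_{1/2}(z)\subset B_2(z)$. Splitting the weak formulation of $\mathcal L_{a_0}v=0$ according to whether both $x,y$ lie in $B_2(z)$ or not, using the symmetry of $A$, and adding and subtracting $c_0$, produces the identity $\mathcal L_{c_0}v=h$ weakly in $\Omega\cap B_{1/2}(z)$, where
\[
h(x):=2\int_{\mathbb R^n\setminus B_2(z)}\frac{v(x)-v(y)}{|x-y|^{n+2s}}\bigl(c_0-A(x,y)\bigr)\,dy.
\]
Since $|x-y|\asymp|y-z|$ for $x\in B_{1/2}(z)$ and $y\notin B_2(z)$, and $|A-c_0|\le 2\Lambda$, it follows that
\[
\|h\|_{L^\infty(B_{1/2}(z))}\le c\bigl(\|v\|_{L^\infty(B_{1/2}(z))}+\mathrm{Tail}(v;B_1(z))\bigr),
\]
and Lemma \ref{lem.locb} further bounds $\|v\|_{L^\infty(B_{1/2}(z))}$ by $c(\|v\|_{L^1(B_1(z))}+\mathrm{Tail}(v;B_1(z)))$.

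\textbf{Boundary regularity for the fractional Laplacian.} Since $c_0$ is a positive constant in $[\Lambda^{-1},\Lambda]$, $\mathcal L_{c_0}$ is a bounded multiple of the standard fractional Laplacian. Thus $v$ solves a fractional Laplacian equation with bounded right-hand side in the $C^{1,\alpha}$-domain $\Omega\cap B_{1/2}(z)$, vanishes in $B_{1/2}(z)\setminus\Omega$, and has controlled $L^\infty$-norm and tail. The classical boundary $L^\infty$-regularity for $v/d^s$ (in the spirit of \cite{RosSer17,Ros-jmpa}; and, in the generality of $C^{1,\alpha}$-domains, \cite{RosWei24}; or via a barrier $\oldphi\asymp d^s$ as alluded to in Remark \ref{rmk.cs1}) yields
\[
\|v/d^s\|_{L^\infty(\Omega\cap B_{1/4}(z))}\le c\bigl(\|v\|_{L^\infty(B_{1/2}(z))}+\|h\|_{L^\infty(B_{1/2}(z))}+\mathrm{Tail}(v;B_{1/2}(z))\bigr),
\]
and combining with the previous bounds gives the desired estimate. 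The main obstacle is this last step: calibrating an off-the-shelf $L^\infty$ boundary estimate for $v/d^s$ for the fractional Laplacian in $C^{1,\alpha}$-domains with precisely the right dependencies on the $L^\infty$ norm and tail of $v$ and the $L^\infty$ norm of the right-hand side. A robust alternative is to explicitly construct a supersolution barrier $\oldphi\asymp d^s$ in $\Omega\cap B_{1/2}(z)$ with $|\mathcal L_{c_0}\oldphi|$ uniformly bounded, and then conclude $|v|\le C\oldphi$ by comparison.
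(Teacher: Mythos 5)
Your overall strategy --- freeze the kernel to the constant $(A)_{B_{2r}(z)}$ near $z$, push the far-field discrepancy of the kernel into a bounded right-hand side, control $\|v\|_{L^\infty}$ near $z$ by Lemma \ref{lem.locb}, and conclude with a boundary $L^\infty$ bound for $v/d^s$ for the constant-coefficient problem --- is essentially the paper's strategy. The gap is the final step, which you yourself flag as the main obstacle. The inequality you invoke,
\[
\|v/d^s\|_{L^\infty(\Omega\cap B_{1/4}(z))}\le c\bigl(\|v\|_{L^\infty(B_{1/2}(z))}+\|h\|_{L^\infty(B_{1/2}(z))}+\mathrm{Tail}(v;B_{1/2}(z))\bigr),
\]
is not an off-the-shelf result: the available $u/d^s$ estimates in $C^{1,\alpha}$ domains (e.g.\ \cite[Proposition 2.7.8]{FerRos24}, or \cite{RosWei24}) are formulated for solutions with zero (or globally controlled $L^\infty$) exterior data, whereas your $v$ is untouched outside $B_r(z)$ and is only known to lie in $W^{s,2}(\bbR^n)\cap L^1_{2s}(\bbR^n)$ there, so these results do not apply verbatim and the tail-dependent version would have to be proved, not cited. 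The barrier alternative you sketch is likewise not carried out, and building a supersolution $\oldphi\asymp d^s$ with $|\mathcal{L}_{c_0}\oldphi|$ bounded near a merely $C^{1,\alpha}$ boundary is precisely the nontrivial content of the results you are trying to avoid quoting. (A minor further point: after your rescaling the domain becomes $(\Omega-z)/r$, so one should note that the constants from the boundary estimate are uniform in $z$ and $r$ because the rescaled domains have uniformly bounded $C^{1,\alpha}$ character; the paper sidesteps this by working at scale $r$.)

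The paper closes exactly this point by truncating $v$ instead of rewriting the equation for $v$ itself: it runs the argument of Lemma \ref{lem.localization} with $x_0=z$, $R=r$ and $a=(A)_{B_{2r}(z)\times B_{2r}(z)}$, producing $\widetilde v=v\phi$ which vanishes outside $B_{3r/4}(z)$, coincides with $v$ on $B_{5r/8}(z)$, and solves an equation with the \emph{globally} constant kernel $(A)_{B_{2r}(z)\times B_{2r}(z)}$ in $\Omega\cap B_{r/2}(z)$ with right-hand side $g_1+g_2$ absorbing both the local term $r^{-2s}|v|$ and the tail, so that $\|g_1+g_2\|_{L^\infty(B_{r/2}(z))}\le c\,r^{-2s}\bigl(r^{-n}\|v\|_{L^1(B_r(z))}+\mathrm{Tail}(v;B_r(z))\bigr)$ after Lemma \ref{lem.locb}. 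Since $\widetilde v$ is bounded, compactly supported and has zero exterior data, \cite[Proposition 2.7.8]{FerRos24} applies as stated and yields $r^s\|\widetilde v/d^s\|_{L^\infty(\Omega\cap B_{r/4}(z))}\le c\bigl(\|\widetilde v\|_{L^\infty(B_{3r/4}(z))}+r^{2s}\|g_1+g_2\|_{L^\infty}\bigr)$, and the lemma follows because $\widetilde v\equiv v$ on $B_{5r/8}(z)$. If you replace your last step by this truncation (or execute the barrier construction in detail, tracking the $C^{1,\alpha}$ dependence), your argument is complete; as written, the decisive estimate is assumed rather than established.
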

\begin{proof}
We may assume $z=0$. With the help of Lemma \ref{lem.locb}, we first observe that  
    \begin{equation}\label{vmo.homo.ineq1}
        \|v\|_{L^\infty(B_{3r/4})}\leq c(\|v\|_{L^1(B_{r})}+\mathrm{Tail}(v;B_{r}))
    \end{equation}
 for some constant $c=c(\mathsf{data})$. By following the same lines as in the proof of Lemma \ref{lem.localization} with $A(x,y)=a_0(x,y)$, $x_0=z$, $R=r$ and $a=(A)_{B_{2r}(z)\times B_{2r}(z)}$, we see that there is a weak solution $\widetilde{v}$ to 
    \begin{equation*}
\left\{
\begin{alignedat}{3}
\mathcal{L}_{\widetilde{A}}\widetilde{v}&= g_1+g_2&&\qquad \mbox{in  $\Omega \cap B_{r/2}$}, \\
\widetilde{v}&=0&&\qquad  \mbox{in $B_{r/2}\setminus \Omega$},\\
\widetilde{v}&=v&&\qquad  \mbox{in $B_{5r/8}$}.
\end{alignedat} \right.
\end{equation*}
In addition, we get
\begin{equation*}
    \widetilde{A}(x,y)=(A)_{B_{2r}(z)\times B_{2r}(z)}
\end{equation*}
and
\begin{equation}\label{vmo.homo.ineq2}
\begin{aligned}
    \sup_{x\in B_{r/2}}(|g_1(x)|+|g_2(x)|)&\leq cr^{-2s}\left[\sup_{x\in B_{r/2}}|v(x)|+\mathrm{Tail}(v;B_{5r/8})\right]\\
    &\leq cr^{-2s}(r^{-n}\|v\|_{L^1(B_{r})}+\mathrm{Tail}(v;B_{r})),
\end{aligned}
\end{equation}
where we have used \eqref{vmo.homo.ineq1}.
In light of \cite[Proposition 2.7.8]{FerRos24} along with \eqref{vmo.homo.ineq1}, \eqref{vmo.homo.ineq2} and the fact that $\widetilde{v}\equiv 0$ in $\bbR^n \setminus B_{3r/4}$ and $|\widetilde{v}(x)|\leq|v(x)|$, we have 
\begin{align*}
    r^{s}\|\widetilde{v}/d^s\|_{L^{\infty}(\Omega\cap B_{r/4})}&\leq c(\|\widetilde{v}\|_{L^\infty(B_{3r/4})}+r^{2s}\|g_1+g_2\|_{L^\infty(\Omega\cap B_{r/2})})\\
    &\leq c(r^{-n}\|v\|_{L^1(B_{r})}+\mathrm{Tail}(v;B_{r})),
\end{align*}
where $c=c(\mathsf{data})$. Since $\widetilde{v}\equiv v$ in $B_{5r/8}$,  the desired estimate follows.
\end{proof}

By considering two cases $d(z)\leq 2r$ and $d(z)>2r$, we simply obtain the following inequality
\begin{equation}\label{ineq.simple}
    \dashint_{B_r(z)}|g|/r^{s}\,dx\leq c\max\{1,{d^s(z)}/{r^s}\} \dashint_{B_r(z)}|g|/d^{s}\,dx
\end{equation}
for some constant $c=c(n,s)$. We note that this inequality will be frequently used in the remaining parts.

Using comparison estimates given in Section \ref{sec.comp} along with Lemma \ref{lem.loc.const}, we have the following result.
\begin{lemma}\label{lem.disc.comp}
    Let $z\in{\Omega}\cap B_{1/4}$ and $r\in(0,1/64]$. For any $\epsilon>0$, there is a constant $\delta=\delta(\mathsf{data},\epsilon)\in(0,1)$ such that if 
    \begin{equation}\label{comp.vmo.ass1}
        \dashint_{\Omega\cap B_{4r}(z)}|u/d^s|\,dx+\max\{d(z),4r\}^{-s}\mathrm{Tail}(u;B_{4r}(z))\leq \lambda
    \end{equation}
    and
    \begin{equation}\label{comp.vmo.ass2}
        \lambda\dashint_{B_{2r}(z)}\dashint_{B_{2r}(z)}|A-(A)_{B_{2r}(z)\times B_{2r}(z)}|\,dx\,dy+\left(\dashint_{\Omega\cap B_{2r}(z)}(r^s|f|)^{2_*}\,dx\right)^{\frac1{2_*}}\leq \delta\lambda,
    \end{equation}
    then there is a weak solution $v$ to
    \begin{equation*}
        \mathcal{L}_{a_0}v=0\quad\text{in }\Omega\cap B_r(z),
    \end{equation*}
    where $a_0$ is as defined in \eqref{defn.a0.vmo}, satisfying 
    \begin{equation*}
        \|v/d^s\|_{L^\infty(\Omega\cap B_{r/4}(z))}\leq c\lambda\quad\text{and}\quad \dashint_{\Omega\cap B_r(z)}|u/d^s-v/d^s|\,dx\leq c\epsilon\lambda
    \end{equation*}
    for some constant $c=c(\mathsf{data})$.
\end{lemma}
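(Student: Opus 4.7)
The plan is to obtain $v$ through a two-step comparison: first kill the right-hand side $f$, then freeze the kernel. I would begin by invoking Lemma \ref{comp1.conti} on $B_{2r}(z)$ to produce $w\in W^{s,2}(\bbR^n)$ solving $\mathcal{L}_Aw=0$ in $\Omega\cap B_{2r}(z)$ with $w=u$ in $\bbR^n\setminus(\Omega\cap B_{2r}(z))$. Hypothesis \eqref{comp.vmo.ass2} then yields $r^{-n}[u-w]^2_{W^{s,2}(\bbR^n)}\le c(\delta\lambda)^2$. Next I would apply Lemma \ref{comp2.conti} with the frozen coefficient $a_0$ from \eqref{defn.a0.vmo} to obtain $v\in W^{s,2}(\bbR^n)$ solving $\mathcal{L}_{a_0}v=0$ in $\Omega\cap B_r(z)$ with $v=w$ in $\bbR^n\setminus(\Omega\cap B_r(z))$. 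Because $a_0\equiv A$ outside $B_{2r}(z)\times B_{2r}(z)$, the two tail terms in \eqref{ineq3.comp2} vanish identically, leaving only $\|A-a_0\|_{L^{2(1+\delta_0)/\delta_0}(B_{2r}(z)\times B_{2r}(z))}$.

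To convert the VMO smallness $\dashint_{B_{2r}(z)\times B_{2r}(z)}|A-(A)_{B_{2r}(z)\times B_{2r}(z)}|\le\delta$ into control of this higher-integrability norm, I would interpolate against the universal $L^\infty$-bound $\|A-a_0\|_{L^\infty}\le 2\Lambda$, gaining a factor $\delta^{\theta}$ for some $\theta=\theta(\mathsf{data})\in(0,1)$. The prefactor $r^{-n}\|w/r^s\|_{L^1(B_{2r}(z))}+r^{-s}\mathrm{Tail}(w;B_{2r}(z))$ must then be shown to be bounded by $c\lambda$: since $w=u$ on $B_{2r}(z)\setminus\Omega$ and $u\equiv 0$ on $B_{3/4}\setminus\Omega$, the first term reduces via \eqref{ineq.simple} to an integral of $|u/d^s|$ controlled by \eqref{comp.vmo.ass1}; the second equals $\mathrm{Tail}(u;B_{2r}(z))$ and is dominated by $\mathrm{Tail}(u;B_{4r}(z))$ plus an annular remainder, both handled by \eqref{comp.vmo.ass1}. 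These combine to give $r^{-n}[w-v]^2_{W^{s,2}(\bbR^n)}\le c\lambda^2\delta^{\theta}$.

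For the pointwise bound I would apply Lemma \ref{lem.loc.const} directly to $v$, noting that $v=w=0$ on $B_r(z)\setminus\Omega$ so the hypotheses are met; the required control on $r^{-n}\|v\|_{L^1(B_r(z))}+\mathrm{Tail}(v;B_r(z))$ follows from the Step-2 comparison together with a standard energy bound relating $\|v\|_{L^2}$ to $\|w\|_{L^2}$ and its tail. For the $L^1$-closeness, the key point is that $u-w\in X^{s,2}_0(\Omega\cap B_{2r}(z))$ and $w-v\in X^{s,2}_0(\Omega\cap B_r(z))$, so the fractional Hardy inequality of Lemma \ref{lem.sobpoi.ds} converts both $W^{s,2}$-bounds into $L^2$-control of $(u-w)/d^s$ and $(w-v)/d^s$. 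The triangle inequality followed by H\"older then produces
\[
\dashint_{\Omega\cap B_r(z)}|u/d^s-v/d^s|\,dx\le c(\delta+\delta^{\theta/2})\lambda,
\]
and choosing $\delta$ small in terms of $\epsilon$ finishes the proof.

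The main technical difficulty lies in the interpolation just described: Lemma \ref{comp2.conti} depends non-linearly on the kernel oscillation and its right-hand side is multiplied by quantities involving $\|w\|_{L^2}$ and $\mathrm{Tail}(w;B_{2r}(z))$, each of which inherits a $\max\{1,d(z)^s/r^s\}$ weight from \eqref{ineq.simple}. That weight must cancel against the $d(z)^s/r^s$ penalty inside the Hardy inequality when passing to $(w-v)/d^s$, so that the final bound is uniform across all interior and boundary positions of $z$. Once this balancing is arranged, the remaining estimates are standard.
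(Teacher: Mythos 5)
Your overall strategy is exactly the paper's: build $w$ by Lemma \ref{comp1.conti}, then $v$ by Lemma \ref{comp2.conti} with the frozen kernel \eqref{defn.a0.vmo}, note that the off-diagonal tail terms in \eqref{ineq3.comp2} vanish because $a_0\equiv A$ outside $B_{2r}(z)\times B_{2r}(z)$, convert the $(\delta,\cdot)$-vanishing hypothesis into the $L^{2(1+\delta_0)/\delta_0}$-norm by interpolation with $\|A-a_0\|_{L^\infty}\le 2\Lambda$, and pass from $[u-v]_{W^{s,2}}$ to $\|(u-v)/d^s\|_{L^1}$ via the Hardy inequality of Lemma \ref{lem.sobpoi.ds}, whose factor $\max\{1,d^s(z)/r^s\}^{-1}$ cancels the $\max\{1,d(z)/r\}^s$ weight coming from \eqref{ineq.simple}. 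That part of your plan, including the balancing you describe in the last paragraph, is the content of the paper's two cases $d(z)\le 2r$ and $d(z)>2r$ for the $L^1$-closeness, and it is sound (modulo the small inaccuracy that the prefactor $r^{-n}\|w/r^s\|_{L^1}+r^{-s}\mathrm{Tail}(w;B_{2r}(z))$ is bounded by $c\lambda$ only when $d(z)\lesssim r$, and by $c\,(d(z)/r)^s\lambda$ in general, plus a comparison error: inside $\Omega\cap B_{2r}(z)$ one must replace $w$ by $u$ through Lemma \ref{comp1.conti}, not through the boundary identity $w=u$).

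The genuine gap is in the pointwise bound. You propose to apply Lemma \ref{lem.loc.const} ``directly to $v$'' for all positions of $z$, but that lemma is normalized at scale $r$: it gives $\|v/d^s\|_{L^\infty(\Omega\cap B_{r/4}(z))}\le c\,r^{-s}\bigl(r^{-n}\|v\|_{L^1(B_r(z))}+\mathrm{Tail}(v;B_r(z))\bigr)$. In the interior regime $d(z)>2r$ the right-hand side is of size $r^{-s}\cdot d^s(z)\lambda$ by \eqref{ineq.vmo.ud2}, so you only obtain $\|v/d^s\|_{L^\infty}\le c\,(d(z)/r)^s\lambda$, which is not the required $c\lambda$ when $d(z)\gg r$. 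Your closing remark about weight cancellation addresses only the Hardy/$L^1$ step, not this sup-bound. The correct mechanism in the interior case, used in the paper in \eqref{ineq2.lem.dis.comp}, is to exploit $d(x)\approx d(z)$ for $x\in B_{r/4}(z)$ together with the interior local boundedness of Lemma \ref{lem.locb}, which produces the extra factor $r^s/d^s(z)$ that cancels the $d^s(z)/r^s$ penalty; Lemma \ref{lem.loc.const} should be reserved for the boundary regime $d(z)\le 2r$. With that case distinction inserted, the rest of your argument goes through as in the paper.
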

\begin{proof}
    Let $w$ and $v$ be the weak solutions to \eqref{eq.comp.w} and \eqref{eq.comp.v}, respectively. By Lemma \ref{comp1.conti} and Lemma \ref{comp2.conti} together with the fact that $A(x,y)=a_0(x,y)$ in $\bbR^{2n}\setminus (B_{2r}(z)\times B_{2r}(z))$, we get 
    \begin{align*}
        &r^{-n}[u-v]^2_{W^{s,2}(\bbR^n)}\leq 2r^{-n}[u-w]^2_{W^{s,2}(\bbR^n)}+2r^{-n}[w-v]^2_{W^{s,2}(\bbR^n)}\\
        &\leq c\left(\dashint_{\Omega\cap B_{2r}(z)}(r^{s}|f|)^{2_*}\,dx\right)^{\frac2{2_*}}\\
        &\quad+c\big(r^{-2n}\|w/r^s\|^2_{L^{1}(B_{2r}(z))}+r^{-2s}\mathrm{Tail}(w;B_{2r}(z))^2\big)r^{-\frac{2n\delta_0}{1+\delta_0}}\|A-a_0\|_{L^{\frac{2(1+\delta_0)}{\delta_0}}(B_{2r}(z)\times B_{2r}(z))}^{2}
    \end{align*}
    for some constant $c=c(\mathsf{data})$. 
    We note from H\"older's inequality, the Sobolev embedding and Lemma \ref{comp1.conti} that 
    \begin{align*}
        r^{-2n}\|w/r^s\|^2_{L^{1}(B_{2r}(z))}&\leq c\left(r^{-2n}\|(w-u)/r^s\|^2_{L^{1}(B_{2r}(z))} +r^{-2n}\|u/r^s\|^2_{L^{1}(B_{2r}(z))}\right)\\
        &\leq c\left(r^{-n}[w-u]_{W^{s,2}(\bbR^n)}^2+r^{-2n}\|u/r^s\|^2_{L^{1}(B_{2r}(z))}\right)\\
        &\leq c\left[\left(\dashint_{\Omega\cap B_{2r}(z)}(r^{s}|f|)^{2_*}\,dx\right)^{\frac2{2_*}}+r^{-2n}\|u/r^s\|^2_{L^{1}(B_{2r}(z))}\right].
    \end{align*}
    Combining the above two inequalities along with \eqref{comp.vmo.ass2}, we get
    \begin{align}\label{ineq3.comp.vmo}
        r^{-n}[u-v]^2_{W^{s,2}(\bbR^n)}
        &\leq c\left[(\delta\lambda)^2+\delta^{\frac{\delta_0}{1+\delta_0}}(r^{-2n}\|u/r^s\|^2_{L^{1}(B_{2r}(z))}+r^{-2s}\mathrm{Tail}(u;B_{2r}(z))^2)\right].
    \end{align}
    We now estimate the left-hand side of the above inequality by considering two cases that $d(z)\leq 2r$ or $d(z)> 2r$.

    We first assume $d(z)\leq 2r$. Then we  observe from \eqref{ineq.simple} and \eqref{comp.vmo.ass1} that
    \begin{equation}\label{ineq.vmo.ud}
    \begin{aligned}
        &r^{-n}\|u/r^s\|_{L^{1}(B_{2r}(z))}+r^{-s}\mathrm{Tail}(u;B_{2r}(z))\\
        &\leq c\left(\dashint_{\Omega\cap B_{4r}(z)}|u/d^s|\,dx+\max\{d(z),r\}^{-s}\mathrm{Tail}(u;B_{4r}(z))\right)\leq c\lambda.
    \end{aligned}
    \end{equation}
    In addition, by H\"older's inequality together with  \eqref{rmk.bdry} and Lemma \ref{lem.sobpoi.ds}, we have
    \begin{equation*}
        r^{-2n}\|u/d^s-v/d^s\|_{L^1(\Omega\cap B_{r}(z))}\leq cr^{-n}\|u/d^s-v/d^s\|_{L^2(\Omega\cap B_{r}(z))}\leq cr^{-n}[u-v]^2_{W^{s,2}(\bbR^n)}.
    \end{equation*}
    Therefore, we have 
    \begin{equation}\label{delta1.lem.disc.comp}
        \dashint_{\Omega\cap B_r(z)}|u/d^s-v/d^s|\,dx\leq  c\left(\delta\lambda+\delta^{\frac{\delta_0}{2(1+\delta_0)}}\lambda\right) \leq c\epsilon\lambda
    \end{equation}
    by taking $\delta$ sufficiently small depending only on $\mathsf{data}$ and $\epsilon$. In addition, from Lemma \ref{lem.loc.const}, the Sobolev embedding and Lemma \ref{comp1.conti}, we deduce that 
    \begin{align*}
        \|v/d^s\|_{L^\infty(\Omega\cap B_{r/4}(z))}&\leq c(r^{-n}\|v/r^s\|_{L^1(B_{r}(z))}+\mathrm{Tail}(v/r^s;B_{r}(z)))\\
        &\leq c\left(\dashint_{ B_{2r}(z)}(r^{s}|f|)^{2_*}\,dx\right)^{\frac1{2_*}}+r^{-n}\|u/r^s\|_{L^{1}(B_{2r}(z))}+r^{-s}\mathrm{Tail}(u;B_{2r}(z)).
    \end{align*}
    We then use \eqref{comp.vmo.ass2} and \eqref{ineq.vmo.ud} to see that there is a constant $c=c(\mathsf{data})$ such that
    \begin{align}\label{ineq1.lem.dis.comp}
        \|v/d^s\|_{L^\infty(\Omega\cap B_{r/4}(z))}\leq c\lambda.
    \end{align}
   We now consider the case $d(z)>2r$.
    We observe from \eqref{ineq.simple} and \eqref{comp.vmo.ass1} that 
    \begin{equation}\label{ineq.vmo.ud2}
    \begin{aligned}
        &r^{-n}\|u/r^s\|_{L^{1}(B_{2r}(z))}+r^{-s}\mathrm{Tail}(u;B_{2r}(z))\\
        &\leq \frac{cd^{s}(z)}{r^{s}}\left[\dashint_{\Omega\cap B_{4r}(z)}|u/d^s|\,dx+c\max\{d(z),4r\}^{-s}\mathrm{Tail}(u;B_{4r}(z))\right]\leq \frac{cd^{s}(z)}{r^{s}}\lambda.
    \end{aligned}
    \end{equation}
    Then, using the Sobolev embedding, we get 
    \begin{equation*}
        r^{-n}\|u/d^s-v/d^s\|_{L^1(\Omega\cap B_{r}(z))}\leq \frac{cr^{s}}{d^{s}(z)}r^{-n}\|u/r^s-v/r^s\|_{L^1(\Omega\cap B_{r}(z))}\leq \frac{cr^{s}}{d^{s}(z)}r^{-n/2}[u-v]_{W^{s,2}(\bbR^n)}.
    \end{equation*}
 Plugging these inequalities into \eqref{ineq3.comp.vmo} yields 
    \begin{equation}\label{delta2.lem.disc.comp}
        \dashint_{\Omega\cap B_r(z)}|u/d^s-v/d^s|\,dx\leq  c\delta\lambda+c\delta^{\frac{\delta_0}{2(1+\delta_0)}}\lambda \leq c\epsilon\lambda
    \end{equation}
    by taking $\delta$ sufficiently small. Moreover, we also have
    \begin{equation}\label{ineq2.lem.dis.comp}
    \begin{aligned}
        \|v/d^s\|_{L^\infty(\Omega\cap B_{r/4}(z))}&\leq \frac{cr^{s}}{d^{s}(z)}(r^{-n}\|v/r^s\|_{L^1(B_{r}(z))}+\mathrm{Tail}(v/r^s;B_{r}(z)))\\
        &\leq c\left(\dashint_{ B_{2r}(z)}(r^{s}|f|)^{2_*}\,dx\right)^{\frac1{2_*}}\\
        &\quad+\frac{cr^{s}}{d^{s}(z)}\left(r^{-n}\|u/r^s\|_{L^{1}(B_{2r}(z))}+r^{-s}\mathrm{Tail}(u;B_{2r}(z))\right)\\
        &\leq c\lambda
    \end{aligned}
    \end{equation}
    for some constant $c=c(\mathsf{data})$, where we have used Lemma \ref{lem.locb}, \eqref{comp.vmo.ass2} and \eqref{ineq.vmo.ud2}. Therefore combining the estimates \eqref{delta1.lem.disc.comp}, \eqref{ineq1.lem.dis.comp}, \eqref{delta2.lem.disc.comp} and \eqref{ineq2.lem.dis.comp} together with the choice of $\delta=\delta(\mathsf{data},\epsilon)$ determined by \eqref{delta1.lem.disc.comp} and \eqref{delta2.lem.disc.comp} yields the desired result. 
\end{proof}

Let us set
\begin{equation}\label{choi.lambda1}
\begin{aligned}
    \lambda_1\coloneqq &\dashint_{\Omega\cap B_{1/2}}|u/d^s|\,dx+\max\{d(0),1/2\}^{-s}\mathrm{Tail}(u;B_{1/2})+\left(\dashint_{\Omega\cap B_{1/2}}|f|^{2_*}\,dx\right)^{\frac1{2_*}}.
\end{aligned}
\end{equation}
For $\lambda\geq \lambda_1$ and $N\geq1$, we define 
\begin{equation*}
    \mathcal{C}\coloneqq\{z\in \Omega\cap B_{1/4}\,:\,M_{2^{-6}}^{\Omega}(|u/d^s|)(z)>N\lambda\}
\end{equation*}
and
\begin{equation*}
    \mathcal{D}\coloneqq \{z\in \Omega\cap B_{1/4}\,:\,M_{2^{-6}}^{\Omega}(|u/d^s|)(z)>\lambda\}\cup \{z\in \Omega\cap B_{1/4}\,:\,M_{2_*s,2^{-6}}^{\Omega}(|f|^{2_*})(z)>(\delta\lambda)^{2_*}\}.
\end{equation*}
By the weak 1-1 estimate, we get 
\begin{align}\label{weak11.sec4}
    |\mathcal{C}|\leq \frac{c}N
\end{align}
for all $\lambda\geq \lambda_1$, where $c=c(\mathsf{data})$.

\begin{lemma}\label{lem.disc.measd}
Let us fix $z\in {\Omega}\cap B_{1/4}$ and $\rho\in(0,2^{-10}]$.
    There exists $N\geq1$ so that for any $\epsilon\in(0,1)$, there are $\delta=\delta(\mathsf{data},\epsilon)\in(0,1)$ such that for any $\lambda\geq\lambda_1$ with $A$ being $(\delta,2\rho)$-vanishing, if 
    \begin{equation*}
        |\mathcal{C}\cap B_{\rho/8}(z)|\geq \epsilon|B_{\rho/8}|,
    \end{equation*}
    then we have 
    \begin{equation*}
        \Omega \cap B_{\rho/8}(z) \subset \mathcal{D}.
    \end{equation*}
\end{lemma}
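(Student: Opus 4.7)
\emph{Proof proposal.} My plan is to argue by contraposition: pick a point $z_0 \in \Omega\cap B_{\rho/8}(z)\setminus\mathcal{D}$ and deduce $|\mathcal{C}\cap B_{\rho/8}(z)|<\epsilon |B_{\rho/8}|$. The argument has four steps.

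\emph{Step 1: Unpack $z_0\notin \mathcal{D}$.} Directly from the definition of $\mathcal{D}$,
\[
\sup_{0<r<2^{-6}} \dashint_{\Omega\cap B_r(z_0)}|u/d^s|\,dx \le \lambda,\qquad \sup_{0<r<2^{-6}} r^{2_*s}\dashint_{\Omega\cap B_r(z_0)}|f|^{2_*}\,dx \le (\delta\lambda)^{2_*}.
\]
A dyadic annular decomposition of $\mathrm{Tail}(u;B_R(z_0))$ for $R\le 2^{-6}$, combined with $|u(y)|\le d(y)^s\,|u/d^s|(y)$, the bound $d(y)\le c\max\{d(z_0),|y-z_0|\}$, and the control $\lambda\ge\lambda_1$ on the contribution from $\bbR^n\setminus B_{1/2}$, yields
\[
\mathrm{Tail}(u;B_R(z_0))\le c\,\max\{d(z_0),R\}^s\,\lambda\qquad\text{for all }R\le 2^{-6}.
\]
Since $|z-z_0|\le \rho/8$, the same averaged and tail bounds transfer from $z_0$ to $z$ up to universal constants.

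\emph{Step 2: Invoke the comparison lemma.} Apply Lemma~\ref{lem.disc.comp} centered at $z$ with radius $r=\rho$. Condition \eqref{comp.vmo.ass1} is verified by Step~1, while the first summand of \eqref{comp.vmo.ass2} is controlled by the $(\delta,2\rho)$-vanishing assumption on $A$ (since $2r=2\rho$) and the second summand by the $f$-bound in Step~1. For any prescribed $\epsilon_0>0$ (shrinking $\delta$ accordingly), the lemma produces $v$ solving $\mathcal{L}_{a_0}v=0$ in $\Omega\cap B_\rho(z)$ with
\[
\|v/d^s\|_{L^\infty(\Omega\cap B_{\rho/4}(z))}\le c_\sharp\lambda\qquad\text{and}\qquad \dashint_{\Omega\cap B_\rho(z)}|u/d^s-v/d^s|\,dx \le c_\sharp \epsilon_0\lambda.
\]

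\emph{Step 3: Localize the maximal function.} Fix $z_1\in B_{\rho/8}(z)$ and $r\in(0,2^{-6}]$, and split:
for $r\le \rho/16$ we have $B_r(z_1)\subset B_{\rho/4}(z)$, so by the $L^\infty$-bound on $v/d^s$,
\[
\dashint_{\Omega\cap B_r(z_1)}|u/d^s|\,dx\;\le\; c_\sharp\lambda \;+\; \dashint_{\Omega\cap B_r(z_1)}|u/d^s-v/d^s|\,dx;
\]
for $\rho/16<r\le 2^{-6}$, we have $B_r(z_1)\subset B_{3r}(z_0)$ with $3r\le 3\cdot 2^{-6}$, so Step~1 (applied to $z_0$ on slightly larger scales, at worst enlarging $2^{-6}$ by a harmless constant factor absorbed into the cutoff) yields $\dashint_{\Omega\cap B_r(z_1)}|u/d^s|\,dx\le c_\star\lambda$. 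Fixing $N:=2(c_\sharp+c_\star)+1$, if $z_1\in\mathcal{C}\cap B_{\rho/8}(z)$ then $M^{\Omega}_{\rho/16}(|u/d^s-v/d^s|)(z_1)\ge \lambda$.

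\emph{Step 4: Weak $(1,1)$ and choice of $\delta$.} The weak-type $(1,1)$ inequality for the boundary-adapted fractional maximal function gives
\[
|\mathcal{C}\cap B_{\rho/8}(z)| \;\le\; \frac{c}{\lambda}\,\|u/d^s-v/d^s\|_{L^1(\Omega\cap B_\rho(z))} \;\le\; c\,c_\sharp\,\epsilon_0\,\rho^n.
\]
Choosing $\epsilon_0=\epsilon_0(\epsilon,\mathsf{data})$ so that $c\,c_\sharp\,\epsilon_0<\epsilon\,|B_{1/8}|$ and letting $\delta=\delta(\mathsf{data},\epsilon)$ be the value produced by Lemma~\ref{lem.disc.comp} for this $\epsilon_0$ contradicts the hypothesis $|\mathcal{C}\cap B_{\rho/8}(z)|\ge \epsilon|B_{\rho/8}|$, completing the proof.

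\emph{Main obstacle.} The only genuinely delicate point is the tail bound in Step~1: the set $\mathcal{D}$ only controls averages of $|u/d^s|$ and $|f|^{2_*}$, not the tail of $u$ itself, so one must carefully split $\mathrm{Tail}(u;B_R(z_0))$ into a near piece (bounded by a geometric sum of annular averages) and a far piece (absorbed into $\mathrm{Tail}(u;B_{1/2})\le \max\{d(0),1/2\}^s\lambda_1$), while keeping track of the sharp prefactor $\max\{d(z_0),R\}^{-s}$. Once this is in hand, Steps~2--4 are the standard Caffarelli--Peral approximation/weak-$(1,1)$ machinery.
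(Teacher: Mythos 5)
Your argument is correct and follows essentially the same route as the paper: contraposition from a point $z_0\notin\mathcal{D}$, verification of the hypotheses of Lemma \ref{lem.disc.comp} at $z$ with $r=\rho$ (tail split into dyadic annuli plus a far part absorbed via $\lambda_1$, coefficient oscillation via the $(\delta,2\rho)$-vanishing), the $L^\infty$/$L^1$ comparison with $v$, the scale-split inclusion of $\mathcal{C}\cap B_{\rho/8}(z)$ into the superlevel set of $M^{\Omega}_{\rho/16}(|u/d^s-v/d^s|)$, and the weak $(1,1)$ estimate. The one cosmetic point is that for $r\in(\rho/16,2^{-6}]$ with, say, $5r>2^{-6}$ you cannot literally ``enlarge the cutoff'' in the definition of $\mathcal{D}$; instead bound $\dashint_{\Omega\cap B_r(z_1)}|u/d^s|\,dx$ by the average over $\Omega\cap B_{1/2}$, which is at most $c\lambda_1\leq c\lambda$ --- exactly the paper's third case in its choice of $N$.
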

\begin{proof}
For $\epsilon_1>0$, we select $\delta=\delta(\mathsf{data},\epsilon_1)\in (0,1)$ as given by Lemma \ref{lem.disc.comp}. On the contrary, assume that $ \Omega\cap  B_{\rho/8}(z) \not\subset \mathcal{D}$. This means that there exists $\tilde{x}\in  B_{\rho/8}(z)\cap \Omega$ so that 
\begin{align}\label{ineq.vmo.mdn}
\sup_{0<r\leq2^{-6}}\dashint_{\Omega\cap B_r(\tilde{x})}|u/d^s|\,dx
\leq \lambda \quad \mbox{and }
 \sup_{0<r\leq2^{-6}}\,\dashint_{\Omega\cap B_r(\tilde{x})}|r^sf|^{2_*}\,dx \leq (\delta\lambda)^{2_*}.
\end{align}
We now verify the hypotheses of Lemma \ref{lem.disc.comp} by establishing
\begin{equation}\label{comp.vmo.md.ass1}
    \dashint_{\Omega\cap B_{4\rho}(z)}|u/d^s|\,dx+\max\{d(z),4\rho\}^{-s}\mathrm{Tail}(u;B_{4\rho}(z))\leq c\lambda
    \end{equation}
    and
    \begin{equation}\label{comp.vmo.md.ass2}
        \lambda\dashint_{B_{2\rho}(z)}\dashint_{B_{2\rho}(z)}|A-(A)_{B_{2\rho}(z)\times B_{2\rho}(z)}|\,dx\,dy+\left(\dashint_{\Omega\cap B_{4\rho}(z)}(\rho^s|f|)^{2_*}\,dx\right)^{\frac1{2_*}}\leq c\delta\lambda,
    \end{equation}
for some $c=c(\mathsf{data})$. We fix  a non-negative integer $N_\rho$ such that 
\begin{align*}
    {1}/{16} < 2^{N_\rho+7} \rho \leq 1/8.
\end{align*}
Then, we have the following inclusion
\begin{align}\label{ineq.vmo.mdn4}
    B_{2^{i+2} \rho}(z) \subset B_{2^{i+3} \rho}(\tilde{x})\subset B_{1/2}\quad\mbox{for all }i=0,1,\dots,N_\rho+4.
\end{align}
Thus, using \eqref{ineq.vmo.mdn} and \eqref{ineq.vmo.mdn4}, we have 
\begin{align}
 \dashint_{\Omega\cap B_{4\rho}(z)}|u/d^s|\,dx \leq c \dashint_{\Omega\cap B_{8\rho}(\tilde{x})}|u/d^s|\,dx \leq c\lambda
\end{align}
and 
\begin{align}
  \dashint_{\Omega\cap B_{4\rho}(z)} (\rho^s|f|)^{2_*}\,dx \leq c \dashint_{\Omega\cap B_{8\rho}(\tilde{x})} (\rho^s|f|)^{2_*}\,dx \leq c\delta\lambda.
\end{align}
Now by \cite[Lemma 2.2]{DieKimLeeNow24n}, we estimate the nonlocal tail term as 
\begin{align*}
 &\max\{d(z),4\rho\}^{-s}\mathrm{Tail}(u;B_{4\rho}(z))\\
 &\leq c\max\{d(z),4\rho\}^{-s}\rho^{2s}\left[\sum_{i=0}^{N_\rho}(2^i\rho)^{-2s}\dashint_{B_{2^{i+2}\rho}(z)}|u|\,dy+\int_{\bbR^n\setminus B_{2^{N_\rho}\rho}(z)}\frac{|u(y)|}{|y-z|^{n+2s}}\,dy\right] \eqqcolon J_1+J_2.
\end{align*}
Using \eqref{ineq.simple} and \eqref{ineq.vmo.mdn} together with $u\equiv0$ on $B_{1/2}\setminus \Omega$,  we further estimate $J_1$ as
\begin{align*}
    J_1\leq c\sum_{i=0}^{N_\rho}2^{-is}\dashint_{\Omega\cap B_{2^{i+2}\rho}(z) }|u/d^s|\,dy\leq c\sum_{i=0}^{N_\rho}2^{-is}\dashint_{\Omega\cap B_{2^{i+3}\rho}(\widetilde{x})}|u/d^s|\,dy\leq c\lambda\sum_{i=0}^{N_\rho}2^{-is}\leq c\lambda
\end{align*}
for some constant $c=c(\mathsf{data})$, where we have also used \eqref{rmk.bdry} and the fact that $2^{N_\rho+3}\rho\leq 2^{-6}$. On the other hand, using \eqref{ineq.simple} and the fact that $d(0)\leq c$ for some constant $c=c(\Omega)$, we estimate $J_2$ as 
\begin{align*}
    J_2&\leq c\max\{d(z),4\rho\}^{-s}\rho^{2s}\left[\int_{B_{1/2}\setminus \Omega}|u(y)|\,dy+\mathrm{Tail}(u;B_{1/2})\right]\\
    &\leq c\left(\dashint_{B_{1/2}\setminus \Omega}|u/d^s|\,dy+\max\{d(0),1/2\}^{-s}\mathrm{Tail}(u;B_{1/2})\right)\leq c\lambda,
\end{align*}
where $c=c(\mathsf{data})$.
Therefore, combining the above displays and noting that $A$ is $(\delta,2\rho)$-vanishing (see Definition \ref{defn.del.van}), we observe that \eqref{comp.vmo.md.ass1} and \eqref{comp.vmo.md.ass2} hold true. Thus, by Lemma \ref{lem.disc.comp} with $r$ and $\lambda$ replaced by $\rho$ and $c\lambda$, respectively, we get that there is a function $v$ such that 
  \begin{equation}\label{comp.vmo.v-rg}
        \|v/d^s\|_{L^\infty(\Omega\cap B_{\rho/4}(z))}\leq c\lambda\quad\text{and}\quad \dashint_{\Omega\cap B_{\rho}(z)}|u/d^s-v/d^s|\,dx\leq c\epsilon_1\lambda
    \end{equation}
for some constant $c=c(\mathsf{data})$. We next observe \begin{align}\label{comp.vmo.u-v.ms}
 \mathcal{C}\cap B_{\rho/8}(z) \subset \{x\in \Omega\cap B_{\rho/8}(z)\,:\,M_{\rho/16}^{\Omega}(|u/d^s-v/d^s|)(x)>\lambda\}
\end{align} 
provided that $N$ is chosen sufficiently large depending only on the $\mathsf{data}$. Indeed, let $\hat{x}\in \Omega\cap B_{\rho/8}(z)$ such that $M_{\rho/16}^{\Omega}(|u/d^s-v/d^s|)(\hat{x})\leq\lambda$. 
\begin{enumerate}
    \item If $r\in(0,\rho/16)$, then we note from \eqref{comp.vmo.v-rg} that there is a constant $c=c(\mathsf{data})$ such that
    \begin{align*}
        \dashint_{\Omega\cap B_{r}(\hat{x})}|u/d^s|\,dx&\leq \dashint_{\Omega\cap B_{r}(\hat{x})}|(u-v)/d^s|\,dx+\dashint_{\Omega\cap B_{r}(\hat{x}}|v/d^s|\,dx\\
        &\leq M_{\rho/16}^{\Omega}(|u/d^s-v/d^s|)(\hat{x})+\|v/d^s\|_{L^{\infty}(\Omega\cap B_{\rho/4}(z))}\leq c\lambda.
    \end{align*}
    \item If $r\in[\rho/16,2^{-10}]$, then we first observe $B_{r}(\hat{x})\subset B_{8r}(\widetilde{x})$.
    Therefore, using this and \eqref{ineq.vmo.mdn}, we have that there is a constant $c=c(\mathsf{data})$ satisfying
    \begin{align*}
        \dashint_{\Omega\cap B_{r}(\hat{x})}|u/d^s|\,dx\leq c\dashint_{\Omega\cap B_{8r}(\widetilde{x})}|u/d^s|\,dx\leq c\lambda.
    \end{align*}
    \item If $r\in[2^{-10},2^{-6}]$, then we get 
    \begin{align*}
        \dashint_{\Omega\cap B_{r}(\hat{x})}|u/d^s|\,dx\leq c\dashint_{\Omega\cap B_{1/2}}|u/d^s|\,dx\leq c\lambda
    \end{align*}
    for some constant $c=c(\mathsf{data})$.
\end{enumerate}
From $(a)$, $(b)$ and $(c)$, selecting $N$ large enough depending only on $c$ as appearing above, we get that $M_{2^{-6}}^{\Omega}(|u/d^s|)(\hat{x})\leq N\lambda$, which implies $\hat{x}\notin \mathcal{C}\cap B_{\rho/8}(z)$. This proves \eqref{comp.vmo.u-v.ms}. Therefore, by \eqref{comp.vmo.u-v.ms}, the weak $(1,1)$-estimate and \eqref{comp.vmo.v-rg}, we get 
 \begin{align*}
     |\mathcal{C}\cap B_{\rho/8}(z)| &\leq | \{x\in \Omega\cap B_{\rho/8}(z)\,:\,M_{\rho/16}^{\Omega}(|u/d^s-v/d^s|)(x)>\lambda\} |\nonumber\\
     &\leq \frac{c}{\lambda} |B_\rho| \dashint_{\Omega\cap B_{\rho/4}(z)}|u/d^s-v/d^s|\,dx\leq c\epsilon_1 |B_\rho|
 \end{align*}
 which contradicts the assumption of the lemma for $\epsilon=c\epsilon_1$. This completes the proof.
\end{proof}

We next observe the Vitali type covering lemma (see \cite[Lemma 2.7]{ByuWan04}).
\begin{lemma}\label{lem.vit.sec4}
    Let $\epsilon>0$ and $\mathcal{C}\subset\mathcal{D}\subset\Omega\cap B_{1/4}$. Assume that
    \begin{equation*}
        |\mathcal{C}|\leq \epsilon
    \end{equation*}
    and that for any $z\in \Omega\cap B_{1/4}$ and $\rho\in(0,2^{-9}]$,
    \begin{equation*}
        B_{\rho/8}(z)\cap\Omega\subset\mathcal{D}
    \end{equation*}
    holds whenever $|\mathcal{C}\cap B_{\rho/8}(z)|\geq \epsilon|B_{\rho/8}|$. Then there is a constant $c=c(\mathsf{data})$ such that
    \begin{equation*}
        |\mathcal{C}|\leq c\epsilon|\mathcal{D}|.
    \end{equation*}
\end{lemma}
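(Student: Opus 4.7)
This is a classical Calder\'on--Zygmund/Krylov--Safonov density lemma adapted to boundary balls of $\Omega$ via the measure density property \eqref{rmk.bdry}; indeed the statement is explicitly the one cited from \cite[Lemma~2.7]{ByuWan04}. My plan is therefore to follow the standard stopping-time-plus-Vitali recipe. First, since $|\mathcal{C}|\leq\epsilon$ is small, for almost every $z\in\mathcal{C}$ Lebesgue's differentiation theorem yields $\lim_{r\downarrow 0}|\mathcal{C}\cap B_r(z)|/|B_r|=1>\epsilon$, while at the maximal admissible radius $r=2^{-12}=2^{-9}/8$ the density is below $\epsilon$ (here one must read ``$|\mathcal{C}|\leq\epsilon$'' with the implicit normalization forcing the density at this top scale to be $\leq\epsilon$). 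Hence for a.e.\ $z\in\mathcal{C}$ there exists a largest $r_z\in(0,2^{-12}]$ with $|\mathcal{C}\cap B_{r_z}(z)|\geq \epsilon|B_{r_z}|$, and $|\mathcal{C}\cap B_r(z)|<\epsilon|B_r|$ for every $r\in(r_z,2^{-12}]$. Applying the Vitali $5r$-covering lemma to the bounded family $\{B_{r_z}(z)\}$ produces a countable pairwise disjoint subcollection $\{B_{r_{z_i}}(z_i)\}$ with $\mathcal{C}\subset\bigcup_i B_{5r_{z_i}}(z_i)$ modulo a null set.

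Next I would activate the hypothesis of the lemma at each selected $z_i$. Setting $\rho_i\coloneqq 8r_{z_i}\in(0,2^{-9}]$, the stopping inequality becomes $|\mathcal{C}\cap B_{\rho_i/8}(z_i)|\geq\epsilon|B_{\rho_i/8}|$, so the standing hypothesis gives $B_{r_{z_i}}(z_i)\cap\Omega\subset\mathcal{D}$. Combining this inclusion with the uniform boundary measure density \eqref{rmk.bdry} of the $C^{1,\alpha}$-domain $\Omega$ yields
\[
c_1|B_{r_{z_i}}|\leq|\Omega\cap B_{r_{z_i}}(z_i)|\leq|\mathcal{D}\cap B_{r_{z_i}}(z_i)|,
\]
which is the crucial step translating control of $\Omega\cap B_{r_{z_i}}(z_i)$ into control of $|B_{r_{z_i}}|$ itself.

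Finally, using the stopping property at the enlarged scale $5r_{z_i}$, namely $|\mathcal{C}\cap B_{5r_{z_i}}(z_i)|\leq\epsilon|B_{5r_{z_i}}|=5^n\epsilon|B_{r_{z_i}}|$, together with the disjointness of $\{B_{r_{z_i}}(z_i)\}$ and the previous display, I would conclude
\[
|\mathcal{C}|\leq\sum_i|\mathcal{C}\cap B_{5r_{z_i}}(z_i)|\leq 5^n\epsilon\sum_i|B_{r_{z_i}}|\leq \frac{5^n\epsilon}{c_1}\sum_i|\mathcal{D}\cap B_{r_{z_i}}(z_i)|\leq \frac{5^n\epsilon}{c_1}|\mathcal{D}|,
\]
which gives the claim with $c=5^n/c_1=c(\mathsf{data})$. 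I expect the main obstacle to lie in the bookkeeping of the stopping time so that simultaneously (i) $r_z$ exists strictly within $(0,2^{-12}]$ for a.e.\ $z\in\mathcal{C}$ and (ii) the enlarged radius $8r_{z_i}$ is still $\leq 2^{-9}$ so that the hypothesis applies verbatim; all other steps are routine once the boundary density \eqref{rmk.bdry} is invoked to replace the standard ambient Lebesgue measure by the intrinsic measure of $\Omega$.
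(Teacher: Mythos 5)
Your overall strategy (stopping-time radii plus the Vitali $5r$-covering lemma, with the boundary measure density \eqref{rmk.bdry} converting $|\Omega\cap B_{r_i}(z_i)|$ into $|B_{r_i}|$) is exactly the standard argument behind the result the paper invokes; note the paper itself offers no proof, it only cites \cite[Lemma 2.7]{ByuWan04}. However, as written your proof has a genuine unresolved step, and you flag it yourself without repairing it: the hypothesis is literally $|\mathcal{C}|\leq\epsilon$, not $|\mathcal{C}|\leq\epsilon|B_{2^{-12}}|$, so nothing forces the density of $\mathcal{C}$ at the top scale $r=2^{-12}$ to be below $\epsilon$. Consequently (i) the ``largest'' stopping radius $r_z$ may equal $2^{-12}$ with density $\geq\epsilon$ there, so the property ``$|\mathcal{C}\cap B_r(z)|<\epsilon|B_r|$ for $r\in(r_z,2^{-12}]$'' is vacuous or unavailable, and (ii) even when $r_z<2^{-12}$, your key inequality $|\mathcal{C}\cap B_{5r_{z_i}}(z_i)|\leq\epsilon|B_{5r_{z_i}}|$ is only justified when $5r_{z_i}\leq 2^{-12}$; if $r_{z_i}>2^{-12}/5$ the dilated ball leaves the range of the stopping property and the step fails as stated. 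Appealing to an ``implicit normalization'' is not an argument, since the lemma as stated is in fact true without renormalizing.

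The repair is short and keeps your structure: in the exceptional regimes ($r_{z_i}=2^{-12}$, or $5r_{z_i}>2^{-12}$) the radius is bounded below by the universal constant $2^{-12}/5$, so one simply estimates $|\mathcal{C}\cap B_{5r_{z_i}}(z_i)|\leq|\mathcal{C}|\leq\epsilon\leq c(n)\,\epsilon\,|B_{r_{z_i}}|$ with $c(n)=|B_{2^{-12}/5}|^{-1}$, while in the main regime your bound $5^n\epsilon|B_{r_{z_i}}|$ stands. With this case distinction the chain $|\mathcal{C}|\leq\sum_i|\mathcal{C}\cap B_{5r_{z_i}}(z_i)|\leq c(n)\epsilon\sum_i|B_{r_{z_i}}|\leq \frac{c(n)}{c_1}\epsilon\sum_i|\mathcal{D}\cap B_{r_{z_i}}(z_i)|\leq \frac{c(n)}{c_1}\epsilon|\mathcal{D}|$ goes through verbatim (the hypothesis applies at each selected center because $z_i\in\Omega\cap B_{1/4}$ and $\rho_i=8r_{z_i}\leq 2^{-9}$, and \eqref{rmk.bdry} applies since $z_i\in\overline\Omega$ and $r_{z_i}\leq 1$); also record the trivial case $\epsilon\geq 1$, where $\mathcal{C}\subset\mathcal{D}$ already gives the claim. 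With these additions your proof is complete and delivers $c=c(n,\Omega)$, i.e. $c=c(\mathsf{data})$.
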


From \eqref{weak11.sec4}, Lemma \ref{lem.disc.measd} and Lemma \ref{lem.vit.sec4} together with the usage of an inductive argument, we can obtain the following. 
\begin{corollary}
 For $\epsilon>0$, there is a constant $\delta=\delta(\mathsf{data},\epsilon)$ such that if $A$ is $(\delta,1/2)$-vanishing, then for all $\lambda\geq \lambda_1$ and non-negative integer $k$, there holds
 \begin{equation}\label{ineq.cor.46}
 \begin{aligned}
 &\big|\{z\in \Omega\cap B_{1/4}\,:\,M_{2^{-6}}^{\Omega}(|u/d^s|)(z)>N^k\lambda\} \big| \nonumber\\
 &\leq (c\epsilon)^k \big| \{z\in \Omega\cap B_{1/4}\,:\,M_{2^{-6}}^{\Omega}(|u/d^s|)(z)>\lambda\} \big| \nonumber\\
 &+ \sum_{i=1}^{k} (c\epsilon)^i \big|\{z\in \Omega\cap B_{1/4}\,:\,M_{2_*s,2^{-6}}^{\Omega}(|f|^{2_*})(z)>(N^{k-i}\delta\lambda)^{2_*}\} \big|,
 \end{aligned}
 \end{equation}
 for some $c=c(\mathsf{data})$, where $N=N(\mathsf{data})\geq 1$ is given by Lemma \ref{lem.disc.measd}.
\end{corollary}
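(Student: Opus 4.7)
The plan is to establish \eqref{ineq.cor.46} by induction on $k$, feeding the good-$\lambda$ implication of Lemma \ref{lem.disc.measd} into the Vitali-type covering Lemma \ref{lem.vit.sec4} once per step and assembling the resulting telescoping bound into the geometric-series form displayed on the right.

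First I would fix $\epsilon > 0$, let $\delta = \delta(\mathsf{data},\epsilon)$ be the constant provided by Lemma \ref{lem.disc.measd}, and observe that the $(\delta,1/2)$-vanishing assumption on $A$ entails the $(\delta,2\rho)$-vanishing hypothesis of Lemma \ref{lem.disc.measd} for every $\rho \leq 2^{-10}$. The case $k=0$ of \eqref{ineq.cor.46} is a tautology. For the inductive step, set $\widetilde\lambda := N^{k}\lambda$ (which satisfies $\widetilde\lambda \geq \lambda_1$ since $N \geq 1$), and apply Lemma \ref{lem.disc.measd} with $\lambda$ replaced by $\widetilde\lambda$. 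Writing
\begin{align*}
    \mathcal{C}_{k+1} &:= \{z\in\Omega\cap B_{1/4}\,:\,M^{\Omega}_{2^{-6}}(|u/d^s|)(z) > N^{k+1}\lambda\},\\
    \mathcal{D}_{k} &:= \{M^{\Omega}_{2^{-6}}(|u/d^s|) > N^{k}\lambda\} \cup \{M^{\Omega}_{2_*s,2^{-6}}(|f|^{2_*}) > (\delta N^{k}\lambda)^{2_*}\},
\end{align*}
Lemma \ref{lem.disc.measd} produces exactly the density-type implication $|\mathcal{C}_{k+1}\cap B_{\rho/8}(z)| \geq \epsilon |B_{\rho/8}|\Longrightarrow \Omega\cap B_{\rho/8}(z)\subset \mathcal{D}_{k}$.

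The weak 1-1 bound \eqref{weak11.sec4} applied at the shifted level gives $|\mathcal{C}_{k+1}| \leq c/N^{k+1}$; this supplies the smallness hypothesis of Lemma \ref{lem.vit.sec4} (at $k=0$ one may need to enlarge the fixed constant $N = N(\mathsf{data})$ from Lemma \ref{lem.disc.measd} to ensure $|\mathcal{C}_{1}|\leq \epsilon|B_{1/4}|$). Lemma \ref{lem.vit.sec4} then yields
\begin{equation*}
    |\mathcal{C}_{k+1}| \leq c\epsilon\,|\mathcal{D}_{k}| \leq c\epsilon\,|\mathcal{C}_{k}| + c\epsilon\,\big|\{M^{\Omega}_{2_*s,2^{-6}}(|f|^{2_*}) > (\delta N^{k}\lambda)^{2_*}\}\big|.
\end{equation*}
Substituting the inductive hypothesis into $|\mathcal{C}_{k}|$, the first summand becomes $(c\epsilon)^{k+1}|\{M^{\Omega}_{2^{-6}}(|u/d^s|) > \lambda\}|$, while the $f$-sum in the hypothesis shifts under $i\mapsto i+1$ to cover indices $i=2,\dots,k+1$; the extra $f$-piece produced by the Vitali step supplies precisely the missing $i=1$ term (since $(k+1)-1 = k$). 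Collecting these gives \eqref{ineq.cor.46} at level $k+1$, closing the induction.

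The main obstacle is purely a bookkeeping one: coordinating the parameters so that the Vitali hypothesis is legitimate at every scale. Since $N$ is fixed by Lemma \ref{lem.disc.measd} (depending only on $\mathsf{data}$) while the Vitali smallness condition nominally requires $c/N^{k+1}$ to dominate $\epsilon$, only the base application $k=0$ demands care; all subsequent levels are automatic because $c/N^{k+1}$ is already as small as desired. This is handled once and for all by reading $|\mathcal{C}|\leq\epsilon$ in Lemma \ref{lem.vit.sec4} as $|\mathcal{C}|\leq\epsilon|B_{1/4}|$ and, if needed, enlarging $N$ so that the weak 1-1 bound undercuts this threshold, with the cost absorbed into the $\mathsf{data}$-dependent constant $c$ appearing in \eqref{ineq.cor.46}.
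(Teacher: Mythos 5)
Your induction is exactly the argument the paper has in mind (the paper itself only says the corollary follows from \eqref{weak11.sec4}, Lemma \ref{lem.disc.measd} and Lemma \ref{lem.vit.sec4} by induction), and the bookkeeping in your inductive step is correct: applying Lemma \ref{lem.disc.measd} at level $N^k\lambda$, Lemma \ref{lem.vit.sec4} gives $|\mathcal{C}_{k+1}|\leq c\epsilon|\mathcal{D}_k|\leq c\epsilon|\mathcal{C}_k|+c\epsilon\,|\{M^{\Omega}_{2_*s,2^{-6}}(|f|^{2_*})>(N^k\delta\lambda)^{2_*}\}|$, and the reindexing $i\mapsto i+1$ together with the new $i=1$ term reproduces \eqref{ineq.cor.46} at level $k+1$.

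The one place where your write-up goes beyond the paper is the treatment of the smallness hypothesis $|\mathcal{C}|\leq\epsilon$ in Lemma \ref{lem.vit.sec4}, and there your proposed repair does not work as stated. You cannot ``enlarge $N$'': first, $N$ in \eqref{ineq.cor.46} is the fixed constant of Lemma \ref{lem.disc.measd} (so proving the inequality for a larger $N'$ is a different statement, and neither side is monotone in a way that recovers the original one); second, and more seriously, any enlargement that makes $c/N'\leq\epsilon$ forces $N'=N'(\mathsf{data},\epsilon)\sim\epsilon^{-1}$, which is incompatible with the subsequent use in Lemma \ref{lem.vmo}, where $\epsilon$ is chosen \emph{after} $N$ so small that $c\epsilon N^{\frac{nq}{n-sq}}<1$; with $N\sim\epsilon^{-1}$ this choice is impossible. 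Also, the difficulty is not confined to the base case $k=0$: the weak $(1,1)$ bound gives $|\mathcal{C}_{k+1}|\leq c N^{-(k+1)}$, which undercuts a prescribed small $\epsilon$ only once $k\gtrsim\log(1/\epsilon)/\log N$. The standard (and compatible) way to secure the Vitali hypothesis is to arrange smallness through the entry level rather than through $N$: either restrict to $\lambda\geq c\,\epsilon^{-1}\lambda_1$ or absorb a factor $\epsilon^{-1}$ (and the measure normalization coming from the scale $2^{-12}$) into the definition \eqref{choi.lambda1} of $\lambda_1$; this only changes the final constants in Lemma \ref{lem.vmo}. The paper glosses over this point entirely, so apart from this patch your proof is the paper's proof.
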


Using this, we are able to prove the following lemma.
\begin{lemma}\label{lem.vmo}
    Let $f\in L^q(\Omega\cap B_{1/2})$ for some $q\in(2_*,n/s)$. Then there is a constant $\delta=\delta(\mathsf{data},q)$ such that if $A$ is $(\delta,1/2)$-vanishing, then 
    \begin{equation*}
        \|u/d^s\|_{L^{\frac{nq}{n-sq}}(\Omega\cap B_{1/4})}\leq c\left(\|u\|_{L^1_{2s}(\bbR^n)}+\|f\|_{L^q(\Omega\cap B_{1/2})}\right)
    \end{equation*}
    for some constant $c=c(\mathsf{data},q)$.
\end{lemma}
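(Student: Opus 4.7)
The plan is to convert the distribution-function estimate \eqref{ineq.cor.46} into an $L^p$ bound with $p\coloneqq \frac{nq}{n-sq}\in(2,\infty)$ via a dyadic layer-cake argument, and then pass from the maximal function bound to the pointwise bound using Lebesgue differentiation. Concretely, first fix $\lambda_0\geq \lambda_1$ and write
\begin{align*}
\|M^{\Omega}_{2^{-6}}(|u/d^s|)\|^p_{L^p(\Omega\cap B_{1/4})} &\leq C\lambda_0^p+C\sum_{k=0}^\infty N^{(k+1)p}\lambda_0^p\bigl|\bigl\{z\in\Omega\cap B_{1/4}:M^{\Omega}_{2^{-6}}(|u/d^s|)(z)>N^k\lambda_0\bigr\}\bigr|,
\end{align*}
where $N=N(\mathsf{data})$ is the constant given by Lemma \ref{lem.disc.measd}, and then insert \eqref{ineq.cor.46}.

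The first term produced by \eqref{ineq.cor.46} yields a geometric series $\sum_k (c\epsilon N^p)^k$, which converges once $\epsilon$ is chosen so small that $c\epsilon N^p\leq \tfrac12$. Since $p=p(n,s,q)$, this selection gives $\epsilon=\epsilon(\mathsf{data},q)$, and via Lemma \ref{lem.disc.measd} fixes the corresponding $\delta=\delta(\mathsf{data},q)$. The second term is a double sum in $k$ and $i$; the change of variables $j=k-i$ factorizes it as
\begin{align*}
\Bigl(\sum_{i\geq 1}(c\epsilon N^p)^i\Bigr)\Bigl(\sum_{j\geq 0}N^{jp}\bigl|\bigl\{\bigl(M^{\Omega}_{2_*s,2^{-6}}(|f|^{2_*})\bigr)^{1/2_*}>N^j\delta\lambda_0\bigr\}\bigr|\Bigr),
\end{align*}
where the first factor is finite by the choice of $\epsilon$, and the second factor is controlled by $(\delta\lambda_0)^{-p}\|M^{\Omega}_{2_*s,2^{-6}}(|f|^{2_*})\|_{L^{p/2_*}}^{p/2_*}$ via the reverse layer-cake estimate.

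Next, apply the maximal-function embedding \eqref{ineq1.pre.fmax} with $\beta=2_*s$ and exponent $q/2_*$, noting that $\beta\cdot q/2_*=sq<n$ by the assumption $q<n/s$ and that $\frac{n(q/2_*)}{n-2_*s(q/2_*)}=p/2_*$; this gives
\begin{align*}
\|M^{\Omega}_{2_*s,2^{-6}}(|f|^{2_*})\|_{L^{p/2_*}(\Omega\cap B_{1/4})}^{p/2_*}\leq C\|f\|_{L^q(\Omega\cap B_{1/2})}^p.
\end{align*}
Combining the above, choosing $\lambda_0=\lambda_1$, and invoking the pointwise domination $|u/d^s|(z)\leq M^{\Omega}_{2^{-6}}(|u/d^s|)(z)$ a.e. from Lebesgue differentiation, we arrive at
\begin{align*}
\|u/d^s\|_{L^p(\Omega\cap B_{1/4})}\leq C\lambda_1+C\delta^{-1}\|f\|_{L^q(\Omega\cap B_{1/2})}.
\end{align*}

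Finally, bound $\lambda_1$ by $\|u\|_{L^1_{2s}(\bbR^n)}+\|f\|_{L^q(\Omega\cap B_{1/2})}$: the tail is directly controlled by $\|u\|_{L^1_{2s}(\bbR^n)}$, the mean of $|f|^{2_*}$ by H\"older's inequality, and $\|u/d^s\|_{L^1(\Omega\cap B_{1/2})}$ is handled by testing the localized equation from Remark \ref{rmk.loc.sec4} with $u$ itself, using the fractional Hardy inequality (Lemma \ref{lem.sobpoi.ds}) together with the $L^\infty$ bound \eqref{ineq.g2.sec4} on $g_2$. The main obstacle is the geometric-series management in the second step: one has to verify that a single choice of $\epsilon=\epsilon(\mathsf{data},q)$ simultaneously makes both the self-absorbing term in $|\{M>\lambda_0\}|$ and the sum over the $f$-level sets convergent, which is precisely why the threshold $\delta$ in the statement is forced to depend on $q$ but not on $u$ or $f$.
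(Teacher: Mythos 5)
Your main block of the argument (summing \eqref{ineq.cor.46} against the weights $N^{kp}$ with $p=\tfrac{nq}{n-sq}$, reindexing $j=k-i$, closing the two geometric series by a choice of $\epsilon=\epsilon(\mathsf{data},q)$ and hence $\delta=\delta(\mathsf{data},q)$, controlling $\|M^{\Omega}_{2_*s,2^{-6}}(|f|^{2_*})\|_{L^{p/2_*}}$ through \eqref{ineq1.pre.fmax}, and passing from the maximal function to $u/d^s$ by Lebesgue differentiation) is exactly the paper's proof, and it is correct, including the exponent bookkeeping $sq<n$ and $\tfrac{n(q/2_*)}{n-sq}=p/2_*$.

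The gap is in your last step, the bound $\|u/d^s\|_{L^1(\Omega\cap B_{1/2})}\lesssim \|u\|_{L^1_{2s}(\bbR^n)}+\|f\|_{L^q(\Omega\cap B_{1/2})}$ needed to absorb $\lambda_1$ from \eqref{choi.lambda1}. First, the localized solution $\widetilde u$ of Remark \ref{rmk.loc.sec4} is not an admissible test function for its own equation: it vanishes outside $\Omega\cap B_{15/16}$ but not outside $\Omega\cap B_{3/4}$, so ``testing with $u$ itself'' must be replaced by testing with $u\eta^2$ for a cutoff $\eta$, i.e.\ a Caccioppoli inequality. Second, and more seriously, any such Caccioppoli inequality produces the local term $\iint |u(x)|^2|\eta(x)-\eta(y)|^2|x-y|^{-n-2s}\,dx\,dy\simeq \|u\|_{L^2(B_{3/4})}^2$ on the right-hand side, and this $L^2$ norm is not controlled by $\|u\|_{L^1_{2s}(\bbR^n)}+\|f\|_{L^q}$: the lemma only assumes $q>2_*$, and since $2_*<\tfrac{n}{2s}$ the exponent $q$ may lie below $\tfrac{n}{2s}$, so no local boundedness (or $L^1\to L^2$ interpolation via sup-bounds) for the \emph{inhomogeneous} equation is available, and Lemma \ref{lem.locb} only covers the homogeneous case; using the main $L^p$ estimate to interpolate would be circular because $\lambda_1$ multiplies it and lives on the larger ball $B_{1/2}$. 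The Hardy inequality (Lemma \ref{lem.sobpoi.ds}) and the $L^\infty$ bound on $g_2$ do not repair this, since they act after the energy has been estimated. The paper avoids the issue by proving \eqref{ineq2.lem.vmo} through a two-step comparison on $B_{2^{-4}}(z)$: first with $w$ (homogeneous problem, same kernel, Lemma \ref{comp1.conti}), then with $v$ (locally constant kernel, Lemma \ref{comp2.conti}), using the boundary estimate $\|v/d^s\|_{L^\infty}\lesssim \|v\|_{L^1}+\mathrm{Tail}(v)$ of Lemma \ref{lem.loc.const} and Hardy applied to $u-v\in X_0^{s,2}$; in that route every right-hand quantity reduces to $\|u\|_{L^1}$, $\mathrm{Tail}(u)$ and $\|f\|_{L^{2_*}}$, hence to the admissible data. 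You would need to substitute an argument of this type (or otherwise justify the missing $L^1\to L^2$ passage) for your final step.
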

\begin{proof}
Let us fix $N=N(\mathsf{data})\geq1$, which is determined in Lemma \ref{lem.disc.measd}. Then for any $\epsilon>0$, there is a constant $\delta=\delta(\mathsf{data},\epsilon)$ such that \eqref{ineq.cor.46} with $\lambda=\lambda_1$ holds, where the constant $\lambda_1$ is determined in \eqref{choi.lambda1}. There is a constant $\epsilon_1=\epsilon_1(\mathsf{data})\in(0,1)$ such that if $\epsilon\leq \epsilon_1$, then by taking summation over $k$ on both sides of \eqref{ineq.cor.46}, we get 
\begin{align}\label{ineq1.lem.vmo}
    &\sum_{i=1}^{l}(N^k\lambda_1)^{\frac{nq}{n-sq}}\big|\{z\in \Omega\cap B_{1/4}\,:\,M_{2^{-6}}^{\Omega}(|u/d^s|)(z)>N^k\lambda_1\} \big| \nonumber\\
 &\leq\sum_{k=1}^{l} \sum_{i=1}^{k} (N^{k-1}\lambda_1)^{\frac{nq}{n-sq}}(c\epsilon)^i \big|\{z\in \Omega\cap B_{1/4}\,:\,M_{2_*s,2^{-6}}^{\Omega}(|f|^{2_*})(z)>(N^{k-i}\delta\lambda_1)^{2_*}\} \big|\eqqcolon I,
\end{align}
where $c=c(\mathsf{data})$. Moreover, we further estimate the right-hand side of \eqref{ineq1.lem.vmo} as 
\begin{align*}
    I&\leq \sum_{i=1}^{l}(c\epsilon N^{\frac{nq}{n-sq}})^i\sum_{k=i}^{l}\left(N^{k-i}\lambda_1\right)^{\frac{nq}{n-sq}}\big|\{z\in \Omega\cap B_{1/4}\,:\,M_{2_*s,2^{-6}}^{\Omega}(|f|^{2_*})(z)>(N^{k-i}\delta\lambda_1)^{2_*}\} \big|\\
    &\leq c_0\delta^{-\frac{nq}{n-sq}}\sum_{i=1}^{l}(c\epsilon N^{\frac{nq}{n-sq}})^i\left(\|M_{2_*s,2^{-6}}^{\Omega}(|f|^{2_*})(z)\|^{2_*}_{L^{\frac{nq}{2_*(n-sq)}}(\Omega\cap B_{1/4})}+\lambda_1\right)
\end{align*}
for some constant $c=c(\mathsf{data})$ and $c_0=c_0(\mathsf{data},q)$, where we have also used \cite[Lemma 7.3]{CafCar95}. Thus we now take $\epsilon_2=\epsilon_2(\mathsf{data},q)$ such that if $\epsilon\leq \epsilon_2$, then
\begin{align*}
    &\sum_{i=1}^{l}(N^k\lambda_1)^{\frac{nq}{n-sq}}\big|\{z\in \Omega\cap B_{1/4}\,:\,M_{2^{-6}}^{\Omega}(|u/d^s|)(z)>N^k\lambda_1\} \big|\leq c\|f\|_{L^q(\Omega\cap B_{1/2})}+c\lambda_1,
\end{align*}
where $c=c(\mathsf{data},q)$. We now fix $\epsilon=\min\{\epsilon_1,\epsilon_2\}$ which depends only on $\mathsf{data}$ and $q$. Then the constant $\delta=\delta(\mathsf{data},q)$ is now chosen by Lemma \ref{lem.disc.measd}.  By \cite[Lemma 7.3]{CafCar95} and \eqref{choi.lambda1}, we get 
\begin{align}\label{ineq11.lem.vmo}
    \|u/d^s\|_{L^{\frac{nq}{n-sq}}(\Omega\cap B_{1/4})}\leq c\left(\|u/d^s\|_{L^1(\Omega\cap B_{1/2})}+\|u\|_{L^1_{2s}(\bbR^n)}+\|f\|_{L^q(\Omega\cap B_{1/2})}\right),
\end{align}
where we have used 
\begin{equation*}
    \mathrm{Tail}(u;B_{1/2})\leq c\|u\|_{L^1_{2s}(\bbR^n)}.
\end{equation*}
We are going to prove for any $z\in \Omega\cap B_{1/2}$,
\begin{align}\label{ineq2.lem.vmo}
    \|u/d^s\|_{L^1(\Omega\cap B_{2^{-6}}(z))}\leq c\left(\|u\|_{L^1_{2s}(\bbR^n)}+\|f\|_{L^{2_*}(\Omega\cap B_{2^{-3}}(z))}\right)
\end{align}
for some constant $c=c(\mathsf{data})$. 
Let $w$ be a weak solution to \eqref{eq.comp.w} with $r=2^{-4}$ and let $v$ be a weak solution to \eqref{eq.comp.v} with $r=2^{-4}$. Then we obtain
\begin{equation}\label{ineq3.thm.dini}
    \begin{aligned}
    \dashint_{\Omega\cap B_{2^{-6}}(z)}|{u}/d^s|\,dx&\leq c\left(\dashint_{B_{2^{-6}}(z)}|({u}-v)/d^s|\,dx+\dashint_{B_{2^{-6}}(z)}|v/d^s|\,dx\right)\\
    &\leq c\left([{u}-v]_{W^{s,2}(\bbR^n)}+(\|v\|_{L^1(B_{2^{-4}}(z))}+\mathrm{Tail}({u};B_{2^{-4}}(z)))\right),
    \end{aligned}
\end{equation}
where we have used Lemma \ref{lem.loc.const} . We now use Lemma \ref{comp1.conti} and Lemma \ref{comp2.conti} to see that 
\begin{equation}\label{ineq4.thm.dini}
\begin{aligned}
    [{u}-v]_{W^{s,2}(\bbR^n)}&\leq
    [{u}-w]_{W^{s,2}(\bbR^n)}+[w-v]_{W^{s,2}(\bbR^n)}\\
    &\leq c\left(\left(\dashint_{\Omega\cap B_{2^{-4}}(z)}|f|^{2_*}\,dx\right)^{\frac1{2_*}}+\|w\|_{L^1(B_{2^{-3}}(z))}+\mathrm{Tail}(w;B_{2^{-3}}(z))\right)
\end{aligned}
\end{equation}
for some constant $c=c(\mathsf{data})$. In addition, by an aid of Lemma \ref{comp1.conti}, we have 
\begin{equation}\label{ineq5.thm.dini}
\begin{aligned}
    &\|w\|_{L^1(B_{2^{-3}}(z))}+\mathrm{Tail}(w;B_{2^{-3}}(z))\\
    &\leq  c\left[\left(\dashint_{\Omega\cap B_{2^{-3}}(z)}|f|^{2_*}\,dx\right)^{\frac1{2_*}}+\|u\|_{L^1(B_{2^{-3}}(z))}+\mathrm{Tail}(u;B_{2^{-3}}(z))\right].
\end{aligned}
\end{equation}
Combining \eqref{ineq4.thm.dini} and \eqref{ineq5.thm.dini} yields 
\begin{align}\label{ineq6.thm.dini}
    [u-v]_{W^{s,2}(\bbR^n)}\leq c\left[\left(\dashint_{\Omega\cap B_{2^{-3}}(z)}|f|^{2_*}\,dx\right)^{\frac1{2_*}}+\|{u}\|_{L^1(B_{2^{-3}}(z))}+\mathrm{Tail}({u};B_{2^{-3}}(z))\right]
\end{align}
for some constant $c=c(\mathsf{data})$. We now employ Sobolev embedding as in \cite[Lemma 4.8]{Coz17} along with  \eqref{ineq4.thm.dini} and  \eqref{ineq5.thm.dini} to see that
\begin{align*}
    \|v\|_{L^1(B_{2^{-3}}(z))}&\leq \|v-w\|_{L^1(B_{2^{-3}}(z))}+\|w-u\|_{L^1(B_{2^{-3}}(z))}+\|{u}\|_{L^1(B_{2^{-3}}(z))}\\
    &\leq c\left([{v}-w]_{W^{s,2}(\bbR^n)}+[w-u]_{W^{s,2}(\bbR^n)}+\|{u}\|_{L^1(B_{2^{-3}}(z))}\right)\\
    &\leq c\left[\left(\dashint_{\Omega\cap B_{2^{-3}}(z)}|f|^{2_*}\,dx\right)^{\frac1{2_*}}+\|{u}\|_{L^1(B_{2^{-3}}(z))}+\mathrm{Tail}({u};B_{2^{-3}}(z))\right].
\end{align*}
Plugging this and \eqref{ineq6.thm.dini} into \eqref{ineq3.thm.dini} together with \eqref{ineq1.thm.dini} gives
\begin{equation*}
\begin{aligned}
    \dashint_{B_{2^{-6}}(z)}|{u}/d^s|\,dx&\leq c\left[\left(\dashint_{\Omega\cap B_{2^{-3}}(z)}|f|^{2_*}\,dx\right)^{\frac1{2_*}}+\|{u}\|_{L^1(B_{2^{-3}}(z))}+\mathrm{Tail}({u};B_{2^{-3}}(z))\right]
\end{aligned}
\end{equation*}
for some constant $c=c(\mathsf{data})$, which implies \eqref{ineq2.lem.vmo}. Therefore, using a standard covering argument, we have 
\begin{align}
    \|u/d^s\|_{L^1(\Omega\cap B_{1/2})}\leq c\left(\|u\|_{L^1_{2s}(\bbR^n)}+\|f\|_{L^{2_*}(\Omega\cap B_{1/2})}\right).
\end{align}
Plugging this into \eqref{ineq11.lem.vmo} yields the desired estimate.
\end{proof}

\subsection{Proof of Theorem \ref{thm.vmo}}
We are now in a position to present the proof of Theorem \ref{thm.vmo}.
\begin{proof}[Proof of Theorem \ref{thm.vmo}.] 
By the scaling invariant property, we may assume $\rho_0=1$. We now fix $\delta=\delta(\mathsf{data},q)$ as given by Lemma \ref{lem.vmo}. In addition, by Remark \ref{rmk.loc.sec4}, we have that there is a weak solution $\widetilde{u}$ to \eqref{eq.sec4} with $(\delta,1)$-vanishing coefficient $A$. By Lemma \ref{lem.vmo}, we obtain 
\begin{align*}
   \|{u}/d^s\|_{L^{\frac{nq}{n-sq}}(\Omega\cap B_{1/4})} =\|\widetilde{u}/d^s\|_{L^{\frac{nq}{n-sq}}(\Omega\cap B_{1/4})}
   &\leq c\left(\|\widetilde{u}\|_{L^1_{2s}(\bbR^n)}+c\|f+g_2\|_{L^{q}(\Omega\cap B_1)}\right)\\
   &\leq c\left(\|{u}\|_{L^1_{2s}(\bbR^n)}+\|f\|_{L^{q}(\Omega\cap B_1)}\right)
\end{align*}
for some constant $c=c(\mathsf{data},q)$, where we have also used \eqref{ineq.g2.sec4} and the fact that $\widetilde{u}\equiv u$ on $B_{3/4}$ and $|\widetilde{u}(x)|\leq |u(x)|$ for any $x\in\bbR^n$. This completes the proof.
\end{proof}

\section{The case of continuous coefficients}\label{sec5}
In this section, we consider a weak solution $u\in W^{s,2}(B_1)\cap L^1_{2s}(\bbR^n)$ to \eqref{eq: defn}, where the associated kernel coefficient $A$ is continuous in $B_1\times B_1$ and there is a non-decreasing function $\omega:\bbR_+\to\bbR_+$ such that
\begin{equation*}
    |A(x_0,y_0)-A(x_1,y_1)|\leq \omega(\max\{|x_0-x_1|,|y_0-y_1|\})\quad \text{for any }x_0,x_1,y_0,y_1\in B_1.
\end{equation*}

We now fix $z\in \overline{\Omega}\cap B_{1/2}$. Then there is a function $\oldphi\equiv \oldphi_z$ which is the solution to
\begin{equation}\label{eq.barrier}
\left\{
\begin{alignedat}{3}
\mathcal{L}_{A_z}{\oldphi}&= 0&&\qquad \mbox{in  $\Omega \cap B_{1}$}, \\
\oldphi&=g&&\qquad  \mbox{in $\bbR^n\setminus (\Omega \cap B_{1})$},
\end{alignedat} \right.
\end{equation}
where $A_z\coloneqq A(z,z)$ and $g\in C_c^\infty(\bbR^n\setminus {B}_{3/2})$ with $g\equiv 1$ on $B_2\setminus B_{7/4}$ and $0\leq g\leq 1$.
By \cite[Lemma 2.3.9, Proposition 2.6.4, Proposition 2.6.6]{FerRos24}, the maximum principle given in \cite[Lemma 2.3.3]{FerRos24} and \cite[Proposition 2.7.8]{FerRos24}, we observe 
\begin{align}\label{rel.phid1}
{d^s(x)}/{c}\leq\oldphi(x)\leq c d^s(x)\quad\text{if }x\in B_{3/4},
\end{align}
\begin{equation}\label{bdd.phid1}
    \|\oldphi\|_{L^\infty(\bbR^n)}\leq c
\end{equation}
and 
\begin{align}\label{hol.phid1}
    \|\oldphi/d^s\|_{C^{\alpha}(\Omega\cap B_{3/4})}\leq c,
\end{align}
where $c=c(\mathsf{data})$.
By Lemma \ref{lem.localization}, we have the following remark, which enables us to consider a localized problem instead, where the associated kernel coefficient is constant when it is sufficiently far from the given domain.
\begin{remark}\label{rmk.sec5}
We observe that $u\in W^{s,2}(B_{1/8}(z))\cap L^1_{2s}(\bbR^n)$ is a weak solution to 
\begin{equation*}
\left\{
\begin{alignedat}{3}
\mathcal{L}_Au&= f&&\qquad \mbox{in  $\Omega \cap B_{1/8}(z)$}, \\
u&=0&&\qquad  \mbox{in $B_{1/8}(z)\setminus \Omega$}.
\end{alignedat} \right.
\end{equation*}
By Lemma \ref{lem.localization} with $R=1/8$, $x_0=z$ and $a=A(z,z)$, there is a weak solution $\widetilde{u}\in W^{s,2}(\bbR^n)$ to 
\begin{equation}
\label{eq: sec5.loc}
\left\{
\begin{alignedat}{3}
\mathcal{L}_{\widetilde{A}}\widetilde{u}&= F&&\qquad \mbox{in  $\Omega \cap B_{1/16}(z)$}, \\
\widetilde{u}&=0&&\qquad  \mbox{in $B_{1/16}(z)\setminus \Omega$},\\
\widetilde{u}&=u&&\qquad  \mbox{in $B_{5/64}(z)$},
\end{alignedat} \right.
\end{equation}
where $\widetilde{u}\equiv 0$ in $\bbR^n\setminus B_{1/8}(z)$ and
\begin{equation}\label{sec5.F}
    |F(x)|\leq |f(x)|+c\left(|u(x)|+\mathrm{Tail}(u;B_{5/64}(z))\right)\quad\text{for any }x\in B_{1/16}(z),
\end{equation}
and some $c=c(n,s,\Lambda)$. In addition, there is a constant $c=c(n,s,\Lambda)$ such that
\begin{equation*}
    |\widetilde{A}(x,y)-\widetilde{A}(z,z)|\leq {\widetilde{\omega}}(\max\{|x-z|,|y-z|\})\quad\text{for any }x,y\in \bbR^n,
\end{equation*}
where
\begin{align*}
    {\widetilde{\omega}}(\rho)=\begin{cases}
        \omega(\rho)\quad&\text{if }\rho\leq 1/8,\\
        0&\text{if }\rho> 1/8.
    \end{cases} 
\end{align*}
\end{remark}

\subsection{Localized problems}
Throughout this subsection, in light of the Remark \ref{rmk.sec5}, we consider a weak solution $u\in W^{s,2}(\bbR^n)$ to 
\begin{equation}\label{eq.sec5.loc}
\left\{
\begin{alignedat}{3}
\mathcal{L}_Au&= f&&\qquad \mbox{in  $\Omega \cap B_{1/16}(z)$}, \\
u&=0&&\qquad  \mbox{in $B_{1/8}(z)\setminus \Omega$}.
\end{alignedat} \right.
\end{equation}
In addition, the kernel coefficient $A$ satisfies
\begin{equation}\label{ker.cond.sec5.loc}
    |{A}(x,y)-{A}(z,z)|\leq {\pmb{\omega}}(\max\{|x-z|,|y-z|\})\quad\text{for any }x,y\in \bbR^n,
\end{equation}
where
\begin{equation}\label{defn.omega.sec5}
\begin{aligned}
    {\pmb{\omega}}(\rho)=\begin{cases}
        \omega(\rho)\quad&\text{if }\rho\leq 1/8,\\
        0&\text{if }\rho> 1/8
    \end{cases} 
\end{aligned}
\end{equation}
for some non-decreasing function $\omega:\bbR^+\to\bbR^+$.
To get a control on the global behavior of the oscillation of the kernel coefficient, we define a function ${\pmb{\omega}}_G:\bbR_+\to \bbR_+$ as
\begin{equation*}
    {\pmb{\omega}}_{G}(\rho)\coloneqq \rho^s\int_{\rho}^\infty\frac{{\pmb{\omega}}(\xi)}{\xi^{1+s}}\,d\xi \quad\text{for all }\rho>0.
\end{equation*}

We now define the following excess functionals 
\begin{equation*}
    E_{\mathrm{loc}}(u;B_{\rho}(z))\coloneqq \dashint_{B_{\rho}(z)}|u/\oldphi-(u/\oldphi)_{B_{\rho}(z)}|\,dx
\end{equation*}
and
\begin{align*}
    {E}(u;B_{\rho}(z))&\coloneqq E_{\mathrm{loc}}(u;B_{\rho}(z))+\max\{\oldphi(z),\rho^s\}^{-1}\mathrm{Tail}(u-(u/\oldphi)_{B_{\rho}(z)}\oldphi;B_{\rho}(z)).
\end{align*}
We point out that such excess functionals were first introduced in \cite{KimWei24}.
We then observe the following inequality.
\begin{lemma}\label{lem.sim.exc}
    There is a constant $c=c(n,s)$ such that for any $0<\rho<r\leq1/16$,
    \begin{equation*}
    {E}(u;B_{r}(z))\leq c {(r/\rho)^n}{E}(u;B_{r}(z)).
\end{equation*}
\end{lemma}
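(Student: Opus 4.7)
I read the displayed inequality as having a minor typographical issue and interpret the intended claim as
\[
E(u;B_\rho(z))\leq c(r/\rho)^n E(u;B_r(z)) \quad \text{for } 0<\rho<r\leq 1/16.
\]
Write $a_\rho:=(u/\oldphi)_{B_\rho(z)}$ and $a_r:=(u/\oldphi)_{B_r(z)}$. The plan is to split $E$ into its local oscillation and its normalized tail, and control each separately using the barrier estimates \eqref{rel.phid1}--\eqref{bdd.phid1}.

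\textbf{Local part.} The standard averaging trick gives
\[
E_{\mathrm{loc}}(u;B_\rho(z))\leq 2\dashint_{B_\rho(z)}|u/\oldphi-a_r|\,dx\leq 2(r/\rho)^n E_{\mathrm{loc}}(u;B_r(z)),
\]
and the same calculation yields $|a_\rho-a_r|\leq (r/\rho)^n E_{\mathrm{loc}}(u;B_r(z))$. The key pointwise bound that will drive the tail estimates is the following consequence of \eqref{rel.phid1} and the subadditivity $(a+b)^s\leq a^s+b^s$ for $s\in(0,1)$:
\[
\oldphi(y)\leq c\bigl(\oldphi(z)+|y-z|^s\bigr)\quad\text{for }y\in B_{3/4},
\]
which applies throughout $B_r(z)$ since $z\in B_{1/2}$ and $r\leq 1/16$.

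\textbf{Tail part.} By the triangle inequality,
\[
\mathrm{Tail}(u-a_\rho\oldphi;B_\rho(z))\leq \mathrm{Tail}(u-a_r\oldphi;B_\rho(z))+|a_\rho-a_r|\,\mathrm{Tail}(\oldphi;B_\rho(z)).
\]
For $\mathrm{Tail}(\oldphi;B_\rho(z))$, split the integration into $B_{3/4}\setminus B_\rho(z)$ (use the pointwise bound above) and $\bbR^n\setminus B_{3/4}$ (use $\|\oldphi\|_\infty\leq c$) to obtain $\mathrm{Tail}(\oldphi;B_\rho(z))\leq c\max\{\oldphi(z),\rho^s\}$. Combined with the bound on $|a_\rho-a_r|$, this yields
\[
\max\{\oldphi(z),\rho^s\}^{-1}|a_\rho-a_r|\,\mathrm{Tail}(\oldphi;B_\rho(z))\leq c(r/\rho)^n E_{\mathrm{loc}}(u;B_r(z)).
\]
For $\mathrm{Tail}(u-a_r\oldphi;B_\rho(z))$, split into $I_1$ over $B_r(z)\setminus B_\rho(z)$ and $I_2$ over $\bbR^n\setminus B_r(z)$. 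For $I_2$, factoring out the scaling gives $I_2=(\rho/r)^{2s}\,\mathrm{Tail}(u-a_r\oldphi;B_r(z))$, and the ratio $\max\{\oldphi(z),r^s\}/\max\{\oldphi(z),\rho^s\}\leq (r/\rho)^s$ combines with $(\rho/r)^{2s}$ to give a prefactor $(r/\rho)^{-s}\leq 1$, so this piece contributes at most $E(u;B_r(z))$. For $I_1$, use $|u-a_r\oldphi|=\oldphi|u/\oldphi-a_r|$ together with the pointwise bound and $|y-z|\geq\rho$:
\[
I_1\leq c\rho^{2s}\!\int_{B_r\setminus B_\rho}\!\frac{(\oldphi(z)+|y-z|^s)|u/\oldphi-a_r|}{|y-z|^{n+2s}}dy\leq c(\oldphi(z)+\rho^s)(r/\rho)^nE_{\mathrm{loc}}(u;B_r(z)),
\]
so $\max\{\oldphi(z),\rho^s\}^{-1}I_1\leq c(r/\rho)^n E_{\mathrm{loc}}(u;B_r(z))$. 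Combining the contributions yields the lemma.

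\textbf{Main obstacle.} The subtle point is estimate of $I_1$. A naive use of $\|\oldphi\|_\infty\leq c$ loses a factor $(r/\rho)^s$ when we divide by $\max\{\oldphi(z),\rho^s\}$, producing the weaker bound $(r/\rho)^{n+s}$. The refined pointwise bound $\oldphi(y)\leq c(\oldphi(z)+|y-z|^s)$, which simultaneously sees the boundary distance of the center $z$ and the local scale $|y-z|$, is exactly what is needed for the two contributions $\oldphi(z)\rho^{-n}$ and $\rho^{s-n}$ to reassemble into the single factor $\max\{\oldphi(z),\rho^s\}$ that cancels the normalization in front of the tail. The same balance is what makes the $I_2$ piece work through the identity $(r/\rho)^s\cdot(\rho/r)^{2s}=(r/\rho)^{-s}\leq 1$.
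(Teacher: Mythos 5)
Your proof is correct (including your reading of the typo: the left-hand side should be $E(u;B_{\rho}(z))$), and it follows essentially the same route as the paper: the identical splitting of the normalized tail into the piece measured against $(u/\oldphi)_{B_r(z)}$ plus the difference-of-averages piece $|(u/\oldphi)_{B_\rho(z)}-(u/\oldphi)_{B_r(z)}|\,\mathrm{Tail}(\oldphi;B_\rho(z))$, both controlled through the barrier bounds \eqref{rel.phid1} and \eqref{bdd.phid1} via the pointwise estimate $\oldphi(y)\lesssim \oldphi(z)+|y-z|^s$. The only cosmetic difference is that the paper organizes the annulus and tail integrals through dyadic decompositions, whereas you integrate directly using $|y-z|\geq\rho$ and the ratio identity $(\rho/r)^{2s}(r/\rho)^{s}\leq 1$, which yields the same factor $(r/\rho)^n$.
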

\begin{proof}
We first observe that
\begin{align*}
    &\max\{\oldphi(z),\rho^s\}^{-1}\mathrm{Tail}(u-(u/\oldphi)_{B_{\rho}(z)}\oldphi;B_{\rho}(z))\\
    &\leq\max\{\oldphi(z),\rho^s\}^{-1}\rho^{2s}\int_{\bbR^n\setminus B_{\rho}(z)}\frac{|u-(u/\oldphi)_{B_{r}(z)}\oldphi|}{|y-z|^{n+2s}}\,dy\\
    &\quad+\max\{\oldphi(z),\rho^s\}^{-1}\rho^{2s}\int_{\bbR^n\setminus B_{\rho}(z)}\frac{|((u/\oldphi)_{B_{r}(z)}-(u/\oldphi)_{B_{\rho}(z)})\oldphi|}{|y-z|^{n+2s}}\,dy\coloneqq I+J.
\end{align*}
Furthermore, we have
\begin{align*}
    I&\leq\max\{\oldphi(z),\rho^s\}^{-1}\left[\rho^{2s}\int_{B_r(z)\setminus B_{\rho}(z)}\frac{|u-(u/\oldphi)_{B_{r}(z)}\oldphi|}{|y-z|^{n+2s}}\,dy+\rho^{2s}\int_{\bbR^n\setminus B_{r}(z)}\frac{|u-(u/\oldphi)_{B_{r}(z)}\oldphi|}{|y-z|^{n+2s}}\,dy\right].
\end{align*}
We note from \eqref{rel.phid1} that 
\begin{equation}\label{ineq.oldphidk}
       |\oldphi(y)|\leq c|d(y)|^s\leq c(d(z)+2^kr)^{s}\quad\text{for any }y\in B_{2^kr}(z),
   \end{equation}
for all non-negative integer $k$ such that $B_{2^kr}(z)\subset B_{3/4}$. In addition, there is a nonnegative integer $j$ such that $2^j\rho<r<2^{j+1}\rho$. 
Thus, we get 
\begin{align*}
I&\leq c\left(\sum_{i=0}^{j-1}2^{-is}\dashint_{B_{2^{i}\rho}(z)}|u/\oldphi-(u/\oldphi)_{B_{r}(z)}|\,dy+E(u/B_r(z))\right)\leq c(r/\rho)^nE(u;B_r(z))
\end{align*}
for some constant $c=c(\mathsf{data})$.  Let us denote $l$ as a positive integer such that $1/8< 2^l\rho\leq 1/4$.
We now estimate $J$ as 
\begin{align*}
    J&\leq \max\{\oldphi(z),\rho^s\}^{-1}\rho^{2s}\sum_{i=0}^{l-1}\int_{B_{2^{i+1}\rho}(z)\setminus B_{2^ir}(z)}\frac{|((u/\oldphi)_{B_{\rho}(z)}-(u/\oldphi)_{B_{r}(z)})\oldphi|}{|y-z|^{n+2s}}\,dy\\
    &\quad+ \max\{\oldphi(z),\rho^s\}^{-1}\rho^{2s}\int_{\bbR^n\setminus B_{2^l\rho}(z)}\frac{|((u/\oldphi)_{B_{\rho}(z)}-(u/\oldphi)_{B_{r}(z)})\oldphi|}{|y-z|^{n+2s}}\,dy\\
    &\leq c\max\{\oldphi(z),\rho^s\}^{-1}\rho^{2s}\sum_{i=0}^{l-1}(2^i\rho)^{-2s}\max\{d(z),(2^i\rho)\}^s|(u/\oldphi)_{B_{\rho}(z)}-(u/\oldphi)_{B_{r}(z)}|\\
    &\quad +c  |(u/\oldphi)_{B_{\rho}(z)}-(u/\oldphi)_{B_{r}(z)}|\\
    &\leq c|(u/\oldphi)_{B_{\rho}(z)}-(u/\oldphi)_{B_{r}(z)}|
\end{align*}
for some constant $c=c(\mathsf{data})$, where we have used \eqref{ineq.oldphidk} with $r$ replaced by $\rho$ and \eqref{bdd.phid1}. Thus, we get
\begin{align*}
    J\leq c(r/\rho)^nE_{\mathrm{loc}}(u;B_r(z)).
\end{align*}
Combining the estimates of $I$ and $J$ together with 
\begin{align*}
    E_{\mathrm{loc}}(u;B_\rho(z))\leq c(r/\rho)^nE_{\mathrm{loc}}(u;B_r(z)),
\end{align*} we have the desired estimate.
\end{proof}

In light of the comparison estimates given in Section \ref{sec.comp} and \cite[Lemma 6.1]{KimWei24}, we now prove the following.
\begin{lemma}\label{lem.comp.conti}
    Let $r\in(0,1/64]$  and let $v$ be the weak solution to \eqref{eq.comp.v} with $a_0\equiv A_z=A(z,z)$. Then for any $\rho\in(0,1/4]$, 
\begin{equation}\label{hig.fir.ineq}
   {E}_{\mathrm{loc}}(v;B_{\rho r}(z))\leq c\rho^{\alpha}{E}(v;B_r(z))
\end{equation}
holds for some constant $c=c(\mathsf{data})$. In addition, we have 
\begin{equation}\label{comp2.ineq}
\begin{aligned}
    {E}_{\mathrm{loc}}(u-v;B_{r}(z))&\leq c({\pmb{\omega}}(2r)+{\pmb{\omega}}_G(2r))({E}(u;B_{2r}(z))+|(u/\oldphi)_{B_{2r}(z)}|)\\
    &\quad+c\max\{\oldphi(z),r^s\}^{-1}{\mathrm{Tail}}(u(\cdot){\pmb{\omega}}(|\cdot-z|);B_{2r}(z))\\
    &\quad+ c\left(\dashint_{\Omega\cap B_{2r}(z)}(r^s|f|)^{2_*}\,dx\right)^{\frac1{2_*}}.
\end{aligned}
\end{equation}
\end{lemma}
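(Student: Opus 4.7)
\medskip
\noindent\textbf{Proof proposal.}
Part \eqref{hig.fir.ineq} is a boundary H\"older-decay estimate for the quotient of two constant-kernel solutions. Because $A_z$ is constant and $\mathcal{L}_{A_z}\oldphi=0$ in $\Omega\cap B_1\supset\Omega\cap B_r(z)$, both $v$ and $\oldphi$ satisfy the same homogeneous equation $\mathcal{L}_{A_z}\cdot=0$ in $\Omega\cap B_r(z)$. Consequently $v-c\oldphi$ is also a solution for every $c\in\bbR$. Choosing $c=(v/\oldphi)_{B_r(z)}$ and invoking the boundary $C^{\alpha}$-regularity of $v/\oldphi$ established in \cite[Lemma 6.1]{KimWei24} (which in turn rests on \cite[Proposition 2.7.8]{FerRos24} and \eqref{hol.phid1}) yields
\begin{equation*}
    [v/\oldphi]_{C^{\alpha}(\overline{\Omega}\cap B_{r/2}(z))}\leq c\, r^{-\alpha}\, E(v;B_r(z)).
\end{equation*}
Since $E_{\mathrm{loc}}(v;B_{\rho r}(z))\leq c(\rho r)^{\alpha}[v/\oldphi]_{C^{\alpha}}$ by the standard oscillation estimate for H\"older functions, \eqref{hig.fir.ineq} follows at once.

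For \eqref{comp2.ineq}, I would decompose $u-v=(u-w)+(w-v)$, where $w$ is the intermediate solution from Lemma \ref{comp1.conti}, and estimate the two pieces separately. The first piece $u-w$ belongs to $X^{s,2}_0(\Omega\cap B_{2r}(z))$ and solves $\mathcal{L}_A(u-w)=f$ there; by Lemma \ref{comp1.conti} together with the fractional Hardy inequality (Lemma \ref{lem.sobpoi.ds}) and \eqref{rel.phid1}, one obtains $E_{\mathrm{loc}}(u-w;B_r(z))\leq c(\dashint_{\Omega\cap B_{2r}(z)}(r^s|f|)^{2_*}dx)^{1/2_*}$, which accounts for the $f$-contribution in \eqref{comp2.ineq}.

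The bulk of the work lies in the second piece. Set $c_0\coloneqq (u/\oldphi)_{B_{2r}(z)}$. Then $w-v\in X^{s,2}_0(\Omega\cap B_r(z))$ and solves
\begin{equation*}
    \mathcal{L}_{A_z}(w-v)=\mathcal{L}_{A_z-A}w=\mathcal{L}_{A_z-A}(w-c_0\oldphi)+\mathcal{L}_{A_z-A}(c_0\oldphi)\quad\text{in }\Omega\cap B_r(z).
\end{equation*}
Testing with $w-v$ and using \eqref{cond.kernel} to coerce the left-hand side, the right-hand side must be split according to the location of the integration variables:
\begin{description}
\item[(a) Local-local, $(x,y)\in B_{2r}(z)\times B_{2r}(z)$.] Here $|A-A_z|\leq{\pmb{\omega}}(2r)$ by \eqref{ker.cond.sec5.loc}. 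Applying the self-improvement Lemma \ref{lem.self} to $w$ (after bounding $w$ in terms of $u$ via Lemma \ref{comp1.conti}) and exploiting the decomposition $w=(w-c_0\oldphi)+c_0\oldphi$ produces a contribution of size ${\pmb{\omega}}(2r)(E(u;B_{2r}(z))+|c_0|)$.
\item[(b) Local-nonlocal, $x\in B_r(z)$, $y\in\bbR^n\setminus B_{2r}(z)$.] Here $|A(x,y)-A_z|\leq{\pmb{\omega}}(|y-z|)$ since ${\pmb{\omega}}$ is non-decreasing. Splitting $w(y)=u(y)$ (by $w=u$ outside $B_{2r}(z)$) into $u(y)-c_0\oldphi(y)$ plus $c_0\oldphi(y)$, the first term yields $\max\{\oldphi(z),r^s\}^{-1}\mathrm{Tail}(u(\cdot){\pmb{\omega}}(|\cdot-z|);B_{2r}(z))$, while the second, combined with the identity $\rho^s\int_{\rho}^{\infty}{\pmb{\omega}}(\xi)\xi^{-1-s}d\xi={\pmb{\omega}}_G(\rho)$ evaluated at $\rho=2r$, produces the factor ${\pmb{\omega}}_G(2r)(E(u;B_{2r}(z))+|c_0|)$.
\end{description}
Finally, converting the resulting bound on $[w-v]_{W^{s,2}(\bbR^n)}$ into $E_{\mathrm{loc}}(w-v;B_r(z))$ via the Hardy inequality Lemma \ref{lem.sobpoi.ds} and \eqref{rel.phid1}, and recombining with the $E_{\mathrm{loc}}(u-w;B_r(z))$ bound, gives \eqref{comp2.ineq}.

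The main technical obstacle will be the bookkeeping in step (b): extracting the global factor ${\pmb{\omega}}_G(2r)$ rather than a cruder ${\pmb{\omega}}(2r)$ requires carefully pairing the slow decay of ${\pmb{\omega}}(|y-z|)$ against the kernel $|x-y|^{-n-2s}$ and distinguishing the tail carrying $u(\cdot)$ (which remains weighted by ${\pmb{\omega}}$ inside $\mathrm{Tail}$) from the tail carrying $c_0\oldphi(\cdot)$ (which is integrated out using \eqref{bdd.phid1} and \eqref{ineq.oldphidk} to yield ${\pmb{\omega}}_G(2r)$).
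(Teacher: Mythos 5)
Your proposal matches the paper's proof in all essentials: \eqref{hig.fir.ineq} is obtained by appealing to the boundary excess-decay/H\"older estimate for the quotient of constant-kernel solutions from \cite{KimWei24}, and \eqref{comp2.ineq} goes through the decomposition $u-v=(u-w)+(w-v)$, the energy comparison estimates of Section \ref{sec.comp} (your testing argument with the near/far splitting of $|A-A_z|$ is precisely the content of Lemma \ref{comp2.conti}, which the paper simply invokes), the bound of $\|w\|_{L^1}$ and the tails through $E(u;B_{2r}(z))$, $|(u/\oldphi)_{B_{2r}(z)}|$ and the ${\pmb{\omega}}$-weighted tail, and the final conversion to $E_{\mathrm{loc}}$ via Lemma \ref{lem.sobpoi.ds} and \eqref{rel.phid1}. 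The only point to keep straight is that your intermediate contributions, e.g.\ ${\pmb{\omega}}(2r)\bigl(E(u;B_{2r}(z))+|(u/\oldphi)_{B_{2r}(z)}|\bigr)$, are only valid at the $[\,\cdot\,]_{W^{s,2}}$ level with an extra factor $\max\{1,d(z)/r\}^{s}$, which — exactly as in the paper — is cancelled by the gain $\max\{1,d(z)/r\}^{-s}$ coming from the Hardy inequality in the last step.
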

\begin{proof}
We first note that \eqref{hig.fir.ineq} follows from \cite[Lemma 6.6]{KimWei24} and \eqref{rmk.bdry}. 
We are now going to prove \eqref{comp2.ineq}. Let $w$ be the weak solution to \eqref{eq.comp.w}.
    By Lemma \ref{comp1.conti} and Lemma \ref{comp2.conti}, we get 
    \begin{equation}\label{ineq1.comp.conti}
    \begin{aligned}
        r^{-n}[u-v]^2_{W^{s,2}(\bbR^n)}
        &\leq c\left(\dashint_{\Omega\cap B_{2r}(z)}(r^{s}|f|)^{2_*}\,dx\right)^{\frac2{2_*}}\\
        &\quad+c({\pmb{\omega}}(2r)+{\pmb{\omega}}_G(2r))^2(r^{-2n}\|w/r^s\|^2_{L^{1}(B_{2r}(z))}+r^{-2s}\mathrm{Tail}(u;B_{2r}(z))^2)\\
        &\quad+ cr^{2s}\left(\sup_{x\in B_r(z)}\int_{\bbR^n\setminus B_{2r}(z)}\frac{|w(y)||A-A(z,z)|}{|y-z|^{n+2s}}\,dy\right)^2
    \end{aligned}
    \end{equation}
   for some constant $c=c(\mathsf{data})$, where we have also used the following simple observation
   \begin{align*}
       r^{4s}\left(\sup_{x\in B_r(z)}\int_{\bbR^n\setminus B_{2r}(z)}\frac{|A-A_z|}{|y-z|^{n+2s}}\,dy\right)^2\leq r^{4s}\left(\int_{2r}^{\infty}\frac{{\pmb{\omega}}(\rho)}{\rho^{1+2s}}\,d\rho\right)^2&\leq r^{2s}\left(\int_{2r}^{\infty}\frac{{\pmb{\omega}}(\rho)}{\rho^{1+s}}\,d\rho\right)^2.
   \end{align*}
   Moreover, by H\"older's inequality, Sobolev's inequality and Lemma \ref{comp1.conti}, we have
   \begin{align*}
       r^{-2n}\|w/r^s\|^2_{L^1(B_{2r}(z))}&\leq r^{-2n}\|(w-u)/r^s\|^2_{L^1(B_{2r}(z))}+r^{-2n}\|u/r^s\|^2_{L^1(B_{2r}(z))}\\
       &\leq cr^{-n}[w-u]^2_{W^{s,2}(\bbR^n)}+ r^{-2n}\|u/r^s\|^2_{L^1(B_{2r}(z))}\\
       &\leq c\left[\left(\dashint_{\Omega\cap B_{2r}(z)}(r^{s}|f|)^{2_*}\,dx\right)^{\frac2{2_*}}+r^{-2n}\|u/r^s\|^2_{L^1(B_{2r}(z))}\right],
   \end{align*}
   where $c=c(\mathsf{data})$. Additionally, by \eqref{ineq.simple} and \eqref{rel.phid1}, we get
   \begin{align*}
       r^{-n}\|u/r^s\|_{L^1(B_{2r}(z))}&\leq c\max\{1,d(z)/r\}^{s}r^{-n}\|u/d^s\|_{L^1(B_{2r}(z))}\\
       &\leq c\max\{1,d(z)/r\}^{s}r^{-n}\|u/\oldphi^s\|_{L^1(B_{2r}(z))}\\
       &\leq  c\max\{1,d(z)/r\}^{s}(E_{\mathrm{loc}}(u;B_{2r}(z))+|(u/\oldphi)_{B_{2r}(z)}|)
   \end{align*}
   for some constant $c=c(\mathsf{data})$, which gives
   \begin{align*}
       r^{-2n}\|w/r^s\|^2_{L^1(B_{2r}(z))}&\leq c\max\{1,d(z)/r\}^{s}(E_{\mathrm{loc}}(u;B_{2r}(z))+|(u/\oldphi)_{B_{2r}(z)}|)\\
  &\quad+c\left(\dashint_{\Omega\cap B_{2r}(z)}(r^{s}|f|)^{2_*}\,dx\right)^{\frac2{2_*}}
   \end{align*}
   for some constant $c=c(\mathsf{data})$.
   Plugging this into \eqref{ineq1.comp.conti} yields 
   \begin{equation}\label{ineq2.comp.conti}
   \begin{aligned}
       r^{-n}[u-v]^2_{W^{s,2}(\bbR^n)}
        &\leq c\left(\dashint_{\Omega\cap B_{2r}(z)}(r^{s}|f|)^{2_*}\,dx\right)^{\frac2{2_*}}\\
        &\hspace{0.2pt}+c({\pmb{\omega}}(2r)+{\pmb{\omega}}_G(2r))^2\max\{1,d(z)/r\}^{2s}(E(u;B_{2r}(z))+|(u/\oldphi)_{B_{2r}(z)}|)^2\\
        &+c({\pmb{\omega}}(2r)+{\pmb{\omega}}_G(2r))^2r^{-2s}\mathrm{Tail}(u;B_{2r}(z))^2\\
        &+ cr^{-2s}{\mathrm{Tail}}(u(\cdot){\pmb{\omega}}(|\cdot-z|);B_{2r}(z))^2.
   \end{aligned}
   \end{equation}
  To further simplify the third term on the right-hand side of the above expression, we observe that
   \begin{align*}
     &r^{-s}\mathrm{Tail}(u;B_{2r}(z)) \leq r^{-s}\mathrm{Tail}(u-(u/\oldphi)_{B_{2r}(z)}\oldphi;B_{2r}(z)) + r^s\int_{\bbR^n\setminus B_{2r}(z)}\frac{|(u/\oldphi)_{B_{2r}(z)}\oldphi(y)|}{|y-z|^{n+2s}}\,dy  \nonumber\\
     &\leq  c\max\{1,d(z)/r\}^{s}E(u;B_{2r}(z))+  c\underbrace{\sum_{k=0}^{l}(2^{2k}r)^{-s}\dashint_{B_{2^kr}(z)}| (u/\oldphi)_{B_{2r}(z)}\oldphi|\,dy}_{{=:I_1}}\\
       &\quad+\underbrace{r^s\int_{\bbR^n\setminus B_{2^lr}(z)}\frac{|(u/\oldphi)_{B_{2r}(z)}\oldphi(y)|}{|y-z|^{n+2s}}\,dy}_{\eqqcolon I_2}
   \end{align*}
   for some constant $c=c(n,s)$, where the positive integer $l$ satisfies $1/32\leq 2^lr<1/16$ and $B_{2^lr}(z)\subset B_{3/8}$.
   Using \eqref{ineq.oldphidk} and \eqref{rel.phid1}, we estimate $I_1$ as 
   \begin{align*}
       I_1\leq c\sum_{k=0}^{l}(2^{2k}r)^{-s}| (u/\oldphi)_{B_{2r}(z)}|(d(z)+2^kr)^{s}&\leq c(d^s(z)/r^s+1)| (u/\oldphi)_{B_{2r}(z)}|\\
       &\leq c\max\{1,d(z)/r\}^{s}| (u/\oldphi)_{B_{2r}(z)}|
   \end{align*}
   for some constant $c=c(\mathsf{data})$. We next estimate $I_2$ as 
   \begin{align*}
       I_2\leq cr^s\int_{\bbR^n\setminus B_{2^lr}(z)}\frac{|(u/\oldphi)_{B_{2r}(z)}|}{|y-z|^{n+2s}}\,dy\leq c| (u/\oldphi)_{B_{2r}(z)}|,
   \end{align*}
   where we have used \eqref{bdd.phid1} and the fact that $B_{1/8}(z)\subset B_{2^l\rho}(z)$. Plugging the estimates $I_1$ and $I_2$ into \eqref{ineq2.comp.conti} gives us
   \begin{equation}\label{ineq3.comp.conti}
   \begin{aligned}
       r^{-n}[u-v]^2_{W^{s,2}(\bbR^n)}
        &\leq c\left(\dashint_{\Omega\cap B_{2r}(z)}(r^{s}|f|)^{2_*}\,dx\right)^{\frac2{2_*}}\\
        &\quad+c({\pmb{\omega}}(2r)+{\pmb{\omega}}_G(2r))^2\max\{1,d(z)/r\}^{2s}(E(u;B_{2r}(z))+|(u/\oldphi)_{B_{2r}(z)}|)^2\\
        &\quad+ cr^{-2s}{\mathrm{Tail}}(u(\cdot){\pmb{\omega}}(|\cdot-z|);B_{2r}(z))^2.
   \end{aligned}
   \end{equation}
   To estimate the left-hand side of \eqref{ineq3.comp.conti},
   we use \eqref{rel.phid1} and Lemma \ref{lem.sobpoi.ds} to get 
   \begin{align*}
       \left(\dashint_{\Omega\cap B_{r}(z)}|u/\oldphi-v/\oldphi|\,dx\right)^2\leq c \dashint_{\Omega\cap B_{r}(z)}|u/d^s-v/d^s|^2\leq cr^{-n}\frac{1}{\max\{1,d(z)/r\}^{2s}}[u-v]_{W^{s,2}(\bbR^n)}^2
   \end{align*}
   for some constant $c=c(\mathsf{data})$. 
 Combining this and \eqref{ineq3.comp.conti} together with \eqref{rel.phid1}, we have \eqref{comp2.ineq}.
\end{proof}

We now prove the following decay estimates.
\begin{lemma}\label{lem.decay}
   Let $r\in(0,1/64]$. Then, for any $\rho\in(0,1/4]$, we have 
    \begin{align*}
    {E}(u;B_{\rho r}(z))&\leq c\rho^\alpha{E}(u;B_{r}(z))\\
         &\quad+ c\rho^{-n}({\pmb{\omega}}(r)+{\pmb{\omega}}_G(r))({E}(u;B_{r}(z))+|(u/\oldphi)_{B_{r}(z)}|)\\
         &\quad+ c\rho^{-n}\max\{\oldphi(z),r^s\}^{-1}{\mathrm{Tail}}(u(\cdot){\pmb{\omega}}(|\cdot-z|);B_{r}(z))\\
         &\quad+c\rho^{-n}\left(\dashint_{\Omega\cap B_{r}(z)}(r^s|f|)^{2_*}\,dx\right)^{\frac1{2_*}},
\end{align*}
    for some constant $c=c(\mathsf{data})$.
\end{lemma}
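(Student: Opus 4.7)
The plan is to compare $u$ with the $A_z$-harmonic replacement $v$ from Lemma~\ref{lem.comp.conti} and combine the interior decay \eqref{hig.fir.ineq} for $v$ with the comparison bound \eqref{comp2.ineq}. I would apply Lemma~\ref{lem.comp.conti} at radius $r/2$ so that the forcing in \eqref{comp2.ineq} lives on $B_r(z)$, matching the right-hand side of the claim. Let $w$ be the $A$-harmonic replacement on $\Omega\cap B_r(z)$ and $v$ the $A_z$-harmonic replacement on $\Omega\cap B_{r/2}(z)$ with $v=w$ outside; a key observation is that $u\equiv v$ on $\bbR^n\setminus B_r(z)$, so $u-v$ is supported in $B_r(z)$.

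For $\rho\in(0,1/8]$ I would first split
\[
E(u;B_{\rho r}(z))\leq E(v;B_{\rho r}(z))+E_{\mathrm{loc}}(u-v;B_{\rho r}(z))+c\max\{\oldphi(z),(\rho r)^s\}^{-1}\mathrm{Tail}(u-v;B_{\rho r}(z)).
\]
To bound $E(v;B_{\rho r}(z))$, I would use \eqref{hig.fir.ineq} for the local part and, for the nonlocal tail, add and subtract $(v/\oldphi)_{B_{r/2}(z)}\oldphi$, then split the integral into a dyadic sum over annuli $B_{2^{j+1}\rho r}(z)\setminus B_{2^{j}\rho r}(z)$ inside $B_{r/2}(z)$ (each piece controlled by applying \eqref{hig.fir.ineq} at the corresponding intermediate scale) plus the exterior contribution, which is absorbed into the tail of $v-(v/\oldphi)_{B_{r/2}(z)}\oldphi$ at scale $r/2$ with the adjustment $|(v/\oldphi)_{B_{\rho r}(z)}-(v/\oldphi)_{B_{r/2}(z)}|$ estimated in the spirit of Lemma~\ref{lem.sim.exc}. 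This yields $E(v;B_{\rho r}(z))\leq c\rho^\alpha E(v;B_{r/2}(z))$. For the remaining two terms, \eqref{comp2.ineq} at scale $r/2$ controls $E_{\mathrm{loc}}(u-v;B_{r/2}(z))$ by exactly the oscillation, tail and forcing terms in the statement, and passing from $B_{r/2}(z)$ to $B_{\rho r}(z)$ costs only a factor $\rho^{-n}$; the tail of $u-v$, thanks to its support in $B_r(z)$, likewise reduces to a multiple of $\rho^{-n}E_{\mathrm{loc}}(u-v;B_{r/2}(z))$ by a dyadic decomposition over $B_r(z)\setminus B_{\rho r}(z)$.

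Finally, I would pass from $E(v;B_{r/2}(z))$ back to $E(u;B_r(z))$ via $E(v;B_{r/2}(z))\leq E(u;B_{r/2}(z))+E(u-v;B_{r/2}(z))$, estimating the first summand by Lemma~\ref{lem.sim.exc} and the second by \eqref{comp2.ineq} together with the support property of $u-v$, so that the $u-v$ contribution is absorbed into the oscillation and forcing terms already present in the statement. The range $\rho\in(1/8,1/4]$ is then trivial via Lemma~\ref{lem.sim.exc} since $\rho^\alpha$ is bounded below by a positive constant. The main obstacle is the dyadic tail decomposition for $E(v;B_{\rho r}(z))$: producing a genuine factor $\rho^\alpha$ rather than merely $\rho^{-n}$ requires invoking \eqref{hig.fir.ineq} at each intermediate scale and carefully matching the weights $\max\{\oldphi(z),(\rho r)^s\}$ and $\max\{\oldphi(z),(r/2)^s\}$, whose ratio is controlled by $\rho^{-s}$ and hence absorbable into the decay because $\alpha<s$.
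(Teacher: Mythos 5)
Your proposal is correct in substance, but it organizes the tail estimate differently from the paper and one step needs a slight strengthening of what you cite. The paper first proves the purely local inequality $E_{\mathrm{loc}}(u;B_{\rho r}(z))\le I$, with $I$ the right-hand side of the lemma, by exactly your comparison at radius $r/2$ (that is, \eqref{hig.fir.ineq} plus \eqref{comp2.ineq}, with the enlargement factor $\rho^{-n}$), and then controls the tail part of $E(u;B_{\rho r}(z))$ by a dyadic decomposition applied to $u$ itself, re-using that local inequality at every intermediate scale $2^{k}\rho$ and summing $2^{-ks}(2^{k}\rho)^{\alpha}$ and $2^{-ks}(2^{k}\rho)^{-n}$, which is where $\alpha<s$ enters; it never needs a full-excess decay for $v$ nor an $L^{1}$ bound on $u-v$. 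You instead upgrade \eqref{hig.fir.ineq} to the full-excess decay $E(v;B_{\rho r}(z))\le c\rho^{\alpha}E(v;B_{r/2}(z))$ for the frozen-coefficient solution and transfer to $u$ using that $u-v$ vanishes outside $B_{r}(z)$. This route does work: your weight bookkeeping $\max\{\oldphi(z),(2^{k}\rho r)^{s}\}/\max\{\oldphi(z),(\rho r)^{s}\}\le 2^{ks}$, the telescoping of the averages $(v/\oldphi)_{B_{2^{k}\rho r}(z)}$, and the far-field gain $\rho^{s}\le\rho^{\alpha}$ are exactly the right ingredients (with a crude Lemma \ref{lem.sim.exc}-type bound at the few top dyadic scales where \eqref{hig.fir.ineq} is not directly applicable), but it essentially duplicates for $v$ the telescoping computation the paper performs once for $u$. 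The caveat: your estimate of the tail of $u-v$ and your passage from $E(v;B_{r/2}(z))$ back to $E(u;B_{r}(z))$ require the plain average $\dashint_{B_{r}(z)}|(u-v)/\oldphi|\,dx$ (including the annulus $B_{r}(z)\setminus B_{r/2}(z)$, where $u-v=u-w$), and the \emph{statement} of \eqref{comp2.ineq} is only a mean-oscillation bound, which does not control this average; you must invoke what the proof of Lemma \ref{lem.comp.conti} actually establishes, namely the $L^{1}$ control of $|(u-v)/\oldphi|$ through Lemma \ref{lem.sobpoi.ds} applied to $[u-v]_{W^{s,2}(\bbR^{n})}$, together with Lemma \ref{comp1.conti} for the piece $u-w$. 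With that available strengthening your argument closes and gives the lemma with the same dependence of the constant.
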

\begin{proof}
 If $\rho\in[2^{-10},2^{-3}]$, then the desired estimate follows directly from Lemma \ref{lem.sim.exc}. Thus, we now assume $\rho\in(0,2^{-10})$. Then there is a positive integer $N_\rho$ such that 
    \begin{equation}\label{cond.nrho}
        2^{-7}\leq 2^{N_\rho}\rho<2^{-5 }.
    \end{equation}
     We note that
     \begin{align*}
         {E}_{\mathrm{loc}}(u;B_{\rho r}(z))\leq {E}_{\mathrm{loc}}(v;B_{\rho r}(z))+{E}_{\mathrm{loc}}(u-v;B_{\rho r}(z))\eqqcolon I_1+I_2,
     \end{align*}
     where $v$ is the weak solution to \eqref{eq.comp.w} with $a_0$ and $r$ replaced by $A(z,z)$ and $r/2$, respectively.
     In light of \eqref{hig.fir.ineq} in Lemma \ref{lem.comp.conti} with $r$ replaced by $r/2$, we first estimate $I_1$ as 
     \begin{align*}
         I_1&\leq c\rho^{\alpha}{E}(v;B_{r/2}(z))\leq c\rho^\alpha{E}(u;B_{r/2}(z))+c\rho^{\alpha}E_{\mathrm{loc}}(v-u;B_{r/2}(z))
     \end{align*}
     for some constant $c=c(\mathsf{data})$.
     By \eqref{comp2.ineq} in Lemma \ref{lem.comp.conti} with $r$ replaced by $r/2$, we next estimate $I_2$ as 
     \begin{align*}
         I_2&\leq c\rho^{-n}({\pmb{\omega}}(r)+{\pmb{\omega}}_G(r))({E}(u;B_{r}(z))+|(u/\oldphi)_{B_{r}(z)}|)\\
    &\quad+c\rho^{-n}{\max\{\oldphi(z),r^s\}^{-1}{\mathrm{Tail}}(u(\cdot){\pmb{\omega}}(|\cdot-z|);B_{r}(z))}+ c\rho^{-n}\left(\dashint_{\Omega\cap B_{r}(z)}(r^s|f|)^{2_*}\,dx\right)^{\frac1{2_*}},
     \end{align*}
    where $c=c(\mathsf{data})$.
     Combining all the estimates $I_1$ and $I_2$ yields 
     \begin{equation}\label{ineq1.decay}
     \begin{aligned}
         {E}_{\mathrm{loc}}(u;B_{\rho r}(z))&\leq c\rho^\alpha {E}(u;B_{r}(z))\\
         &\quad+ c\rho^{-n}({\pmb{\omega}}(r)+{\pmb{\omega}}_G(r))({E}(u;B_{r}(z))+|(u/\oldphi)_{B_{r}(z)}|)\\
         &\quad+ c\rho^{-n}\max\{\oldphi(z),r^s\}^{-1}{\mathrm{Tail}}(u(\cdot){\pmb{\omega}}(|\cdot-z|);B_{r}(z))\\
         &\quad+c\rho^{-n}\left(\dashint_{\Omega\cap B_{r}(z)}(r^s|f|)^{2_*}\,dx\right)^{\frac1{2_*}}\eqqcolon I,
     \end{aligned}
     \end{equation}
     where $c=c(\mathsf{data})$. Using \cite[Lemma 2.2]{DieKimLeeNow24n}, we observe that
     \begin{equation}\label{ineq.tail.decay}
     \begin{aligned}
         &\max\{\oldphi(z),(\rho r)^s\}^{-1}{\mathrm{Tail}}(u(\cdot)-(u/\oldphi)_{B_{\rho r}(z)}\oldphi;B_{\rho r}(z))\\
         &\leq c\max\{\oldphi(z),(\rho r)^s\}^{-1}\sum_{k=1}^{N_\rho}2^{-2sk}\dashint_{B_{2^k\rho r}(z)}|u-(u/\oldphi)_{B_{\rho r}(z)}\oldphi|\,dx\\
         &\quad+ c\max\{\oldphi(z),(\rho r)^s\}^{-1}(\rho r)^{2s}\int_{\bbR^n\setminus B_{2^{N_\rho}\rho r}(z)}\frac{|u-(u/\oldphi)_{B_{\rho r}(z)}\oldphi|}{|y-z|^{n+2s}}\,dy\eqqcolon J_1+J_2.
     \end{aligned}
     \end{equation}
     We next note from \eqref{ineq.oldphidk} with $r$ and $z$ replaced by $\rho r$ and $0$, respectively, and \eqref{rel.phid1} that 
     \begin{equation}\label{ineq2.decay}
         |\oldphi(y)|\leq c(\oldphi(z)+(2^k\rho r)^s)\leq c\max\{\oldphi(z),(2^k\rho r)^s\}\quad\text{for any }y\in B_{2^k\rho r}(z)\text{ with }k=0,1,\cdots, N_\rho
     \end{equation}
     for some constant $c=c(\mathsf{data})$.
    Using this, we now estimate $J_1$ as 
     \begin{align*}
         J_1\leq c\sum_{k=1}^{N_\rho} 2^{-ks}\dashint_{B_{2^k\rho r}(z)}|u/\oldphi-(u/\oldphi)_{B_{\rho r}(z)}|\,dx&\leq c\sum_{k=1}^{N_\rho} 2^{-ks}\sum_{j=1}^{k}\dashint_{B_{2^j\rho r}(z)}|u/\oldphi-(u/\oldphi)_{B_{2^j\rho r}(z)}|\,dx\\
         &\leq c\sum_{j=1}^{N_\rho} 2^{-js}\dashint_{B_{2^j\rho r}(z)}|u/\oldphi-(u/\oldphi)_{B_{2^j\rho r}(z)}|\,dx,
     \end{align*}
where we have also used Fubini's theorem and the fact that 
\begin{align*}
    \sum_{k=j}^{N_\rho}2^{-ks}\leq c2^{-js}.
\end{align*}
After a few simple calculations along with \eqref{cond.nrho}, \eqref{ineq2.decay} and the fact that $\alpha<s$, we get
     \begin{align*}
         J_2& \leq \rho^{2s}\max\{\oldphi(z),(\rho r)^s\}^{-1}r^{2s}\int_{\bbR^n\setminus B_{2^{N_\rho}\rho r}(z)}\frac{|u-(u/\oldphi)_{B_{\rho r}(z)}\oldphi|}{|y-z|^{n+2s}}\,dy\\
         &\leq c\sum_{k=1}^{N_\rho} 2^{-N_\rho s}\dashint_{B_{2^k\rho r}(z)}|u/\oldphi-(u/\oldphi)_{B_{2^k\rho r}(z)}|\,dx\\
         &\quad+c\left[\rho^{s}\dashint_{B_{r}(z)}|u/\oldphi-(u/\oldphi)_{B_{r}(z)}|\,dx+ \rho^{s}\max\{\oldphi(z),r^s\}^{-1}\mathrm{Tail}(u-(u/\oldphi)_{B_{r}(z)}\oldphi;B_{r}(z))\right]\\
         &\leq c\left[\sum_{k=1}^{N_\rho} 2^{-ks}\dashint_{B_{2^k\rho r}(z)}|u/\oldphi-(u/\oldphi)_{B_{2^k\rho r}(z)}|\,dx+\rho^{s}{E}(u;B_{r}(z))\right]
     \end{align*}
     for some constant $c=c(\mathsf{data})$.
Combining all the estimates $J_1$ and $J_2$, we have 
\begin{align}\label{ineq3.decay}
    J_1+J_2&\leq c\left[\sum_{k=1}^{N_\rho} 2^{-ks}{E}_{\mathrm{loc}}(u;B_{2^k\rho r}(z))+\rho^{s}{E}(u;B_{r}(z))\right].
\end{align}
We now use \eqref{ineq1.decay} and the fact that $\alpha<s$ to find 
\begin{align*}
    \sum_{k=1}^{N_\rho} 2^{-ks}{E}_{\mathrm{loc}}(u;B_{2^k\rho r}(z))
    &\leq \sum_{k=1}^{N_\rho}c2^{-ks}(2^k\rho)^{\alpha } {E}(u;B_{r}(z))\\
         &\quad+ \sum_{k=1}^{N_\rho}c2^{-ks}(2^k\rho)^{-n}({\pmb{\omega}}(r)+{\pmb{\omega}}_G(r))({E}(u;B_{r}(z))+|(u/\oldphi)_{B_{r}(z)}|)\\
         &\quad+ \sum_{k=1}^{N_\rho}c2^{-ks}(2^k\rho)^{-n}\max\{\oldphi(z),r^s\}^{-1}{\mathrm{Tail}}(u(\cdot){\pmb{\omega}}(|\cdot-z|);B_{r}(z))\\
         &\quad+\sum_{k=1}^{N_\rho}c2^{-ks}(2^k\rho)^{-n}\left(\dashint_{\Omega\cap B_{r}(z)}(r^s|f|)^{2_*}\,dx\right)^{\frac{1}{2_*}}\\
         &\leq cI,
\end{align*}
where  $I$ is as determined in \eqref{ineq1.decay}. By plugging this into \eqref{ineq3.decay} and taking into account \eqref{ineq.tail.decay}, we get
\begin{align*}
    \max\{\oldphi(z),(\rho r)^s\}^{-1}{\mathrm{Tail}}((u(\cdot)-(u/\oldphi)_{B_{\rho r}(z)}\oldphi;B_{\rho r}(z))\leq cI+c\rho^{s}{E}(u;B_{r}(z))
\end{align*}
for some constant $c=c(\mathsf{data})$. This coupled with \eqref{ineq1.decay} gives the desired estimate.
\end{proof}

We are now able to get a uniform control on the averages of $u/\oldphi$ on small balls by the Wolff-potential of $|f|^{2_*}\mbox{\Large$\chi$}_{\Omega}$.
\begin{lemma}\label{lem.dini}
Let us assume $A$ satisfies
\begin{equation}\label{ass.lem.dini}
    \int_{0}^{1/8}\frac{{{\omega}}(\rho)}{\rho}\,d\rho\leq N
\end{equation}
for some constant $N>0$, where $\omega$ is the non-decreasing function given in \eqref{defn.omega.sec5}. Then there is a constant $m=m(\mathsf{data})\geq1$  such that for any $r\in(0,1/64]$ and $i\geq1$, we have
\begin{align}\label{ineq.lem.dini}
    \dashint_{B_{2^{-im}r}(z)}|u/\oldphi|\,dx\leq c\left({E}(u;B_r(z))+|(u/\oldphi)_{B_r(z)}|+{\mathrm{Tail}}(u/r^s;B_{r}(z))+\mathbf{W}^{|f|^{2_*}\mbox{\Large$\chi$}_{\Omega}}_{\frac{2_* s}{2_*+1},2_*+1}(z,r)\right),
\end{align}
where $c=c(\mathsf{data},N,
{{\omega}})$.
\end{lemma}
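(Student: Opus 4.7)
The plan is to iterate the decay estimate of Lemma \ref{lem.decay} along the dyadic sequence $r_i := 2^{-im}r$, where $m = m(\mathsf{data}) \geq 1$ is chosen so that $c\rho^\alpha \leq \tfrac14$ for $\rho = 2^{-m}$ (with $c$ the constant from Lemma \ref{lem.decay}). Set $e_i := E(u;B_{r_i}(z))$, $\mu_i := (u/\oldphi)_{B_{r_i}(z)}$, $\sigma_i := {\pmb\omega}(r_i) + {\pmb\omega}_G(r_i)$, $\phi_i := (\dashint_{\Omega \cap B_{r_i}(z)}(r_i^s|f|)^{2_*}\,dx)^{1/2_*}$, and $T := \mathrm{Tail}(u/r^s;B_r(z))$. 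Lemma \ref{lem.decay} then yields
\[
 e_{i+1} \leq \tfrac14 e_i + C\sigma_i(e_i + |\mu_i|) + C\,\mathcal{T}_i + C\phi_i,
\]
with $\mathcal{T}_i = \max\{\oldphi(z),r_i^s\}^{-1}\mathrm{Tail}(u(\cdot){\pmb\omega}(|\cdot-z|);B_{r_i}(z))$, while the Cauchy-type bound $|\mu_{i+1} - \mu_i| \leq C e_i$ follows at once from $E_{\mathrm{loc}}(u;B_{r_i}) \leq e_i$ and the ratio $|B_{r_i}|/|B_{r_{i+1}}| = 2^{mn}$.

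Next I reduce $\mathcal{T}_i$ to a weighted sum of the $b_j := e_j + |\mu_j|$. Splitting $\mathbb R^n\setminus B_{r_i}(z)$ into the annuli $B_{r_j}\setminus B_{r_{j+1}}$ for $0 \leq j \leq i-1$ and the far region $\mathbb R^n\setminus B_r(z)$ (on which ${\pmb\omega}$ is bounded), using monotonicity of $\omega$ and the equivalence \eqref{rel.phid1} in the form $|u(y)| \leq C\max\{\oldphi(z),r_j^s\}|u/\oldphi(y)|$ for $y\in B_{r_j}(z)$, together with the case-by-case inequality $\max\{\oldphi(z),r_i^s\}^{-1}\,r_i^{2s}r_j^{-2s}\max\{\oldphi(z),r_j^s\}\leq C(r_i/r_j)^s$, I obtain
\[
 \mathcal{T}_i \leq C\sum_{j=0}^{i-1} 2^{-(i-j)ms}\omega(r_j)\,b_j + C\cdot 2^{-ims}\,T.
\]
Iterating the contraction $e_{i+1}\leq \tfrac14 e_i + (\text{source}_i)$, then applying Fubini to the resulting double sum (using $\sum_{i>j}2^{-(i-j)ms} < \infty$), yields
\[
 \sum_{i\geq 0} e_i \;\leq\; C e_0 + C\sum_{j\geq 0}\tilde\sigma_j\, b_j + C T + C\sum_{j\geq 0}\phi_j, \qquad \tilde\sigma_j := \sigma_j + \omega(r_j).
\]

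The Dini hypothesis \eqref{ass.lem.dini}, combined with the computation $\sum_j {\pmb\omega}_G(r_j) \leq C\int_0^{1/8}\omega(\xi)/\xi\,d\xi$, gives $\sum_j \tilde\sigma_j \leq CN$; in particular one can choose $j^* = j^*(\omega,N)$ so that both $\sup_{j\geq j^*}\tilde\sigma_j$ and $\sum_{j\geq j^*}\tilde\sigma_j$ are as small as we wish. I treat the initial block $i\leq j^*$ crudely: each iteration of the recursion multiplies $b_i$ by at most a universal factor, so $j^*$ steps produce $e_{j^*} + |\mu_{j^*}| \leq c(\omega,N)(e_0 + |\mu_0| + T + \sum_{j<j^*}\phi_j)$. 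For the tail block $i\geq j^*$, using $|\mu_j| \leq |\mu_{j^*}| + C\sum_{k=j^*}^{j-1}e_k$ and the smallness of $\tilde\sigma_j$ there, the term $C\sum_{j\geq j^*}\tilde\sigma_j b_j$ splits into a piece $\leq \epsilon_0 \sum_{j\geq j^*}e_j$ (absorbable into the left-hand side) and a piece controlled by $|\mu_{j^*}|$. This yields $\sum_i e_i \leq c(\mathsf{data},N,\omega)(E(u;B_r(z)) + |(u/\oldphi)_{B_r(z)}| + T + \sum_j\phi_j)$, and then $\sup_i|\mu_i| \leq |\mu_0| + C\sum_i e_i$ is bounded by the same right-hand side. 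Since $\dashint_{B_{r_i}(z)}|u/\oldphi|\,dx \leq E_{\mathrm{loc}}(u;B_{r_i}(z)) + |\mu_i| \leq e_i + |\mu_i|$ and a standard discretization identifies $\sum_j\phi_j$ with $\mathbf W^{|f|^{2_*}\mbox{\Large$\chi$}_\Omega}_{2_*s/(2_*+1),\,2_*+1}(z,r)$ up to a universal factor, the desired estimate \eqref{ineq.lem.dini} follows.

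The main obstacle is the closing-up step. Because the Dini constant $N$ may be large, a naive summation produces $CN\sum_i e_i$ on the right and cannot be absorbed; the remedy is precisely the split at the index $j^*$, where $\tilde\sigma_j$ has become small, combined with the crude finite-step iteration for $i < j^*$. This split is also what forces the dependence $c = c(\mathsf{data},N,\omega)$ in the conclusion.
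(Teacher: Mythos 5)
Your proof is correct and follows essentially the same route as the paper: iterate the decay estimate of Lemma \ref{lem.decay} along the dyadic chain $2^{-im}r$, decompose the weighted tail term over dyadic annuli via \eqref{ineq2.decay} and \eqref{rel.phid1}, and use the Dini condition to make the summed oscillation quantities small enough to absorb, treating the finitely many remaining scales crudely with constants depending on $\omega$ and $N$. The only difference is organizational: the paper first proves the bound for $r\le R_0(\mathsf{data},\omega,N)$ and then covers $r\in(R_0,1/64]$ via Lemma \ref{lem.sim.exc}, whereas you split the dyadic chain at a fixed index $j^{*}(\omega,N)$ uniformly in $r$ --- an equivalent mechanism.
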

\begin{proof}
We first fix $m\geq1$ and $R_0\in(0,1/32]$ which are determined later in \eqref{cond.m} and \eqref{dini.cho.r0}, respectively. We then select $r\in(0,R_0]$.  By applying Lemma \ref{lem.decay} with $r$ and $\rho$ replaced by $2^{-km}r$ and $2^{-m}$, respectively,  to each $k\in \{0,\dots,l\}$, for any $l\geq 1$ fixed, we have 
\begin{align*}
  &\sum_{k=0}^{l}  E(u;B_{2^{-m(k+1)}r}(z))\\
  &\leq c2^{-m\alpha}\sum_{k=0}^{l}E(u;B_{2^{-mk}r}(z))\\
    &\quad+c2^{-mn}\sum_{k=0}^{l}({\pmb{\omega}}(2^{-mk}r)+{\pmb{\omega}}_G(2^{-mk}r))({E}(u;B_{2^{-mk}r}(z))+|(u/\oldphi)_{B_{2^{-mk}r}(z)}|)\\
         &\quad+ c2^{-mn}\sum_{k=0}^{l}\max\{\oldphi(z),(2^{-mk}r)^s\}^{-1}{\mathrm{Tail}}(u(\cdot){\pmb{\omega}}(|\cdot-z|);B_{2^{-mk}r}(z))\\
         &\quad+c2^{-mn}\sum_{k=0}^{l}\left(\dashint_{\Omega\cap B_{2^{-mk}r}(z)}((2^{-mk}r)^s|f|)^{2_*}\,dx\right)^{\frac1{2_*}}\eqqcolon \sum_{i-1}^{4}I_i
\end{align*}
for some constant $c=c(\mathsf{data})$.
We first choose $m=m(\mathsf{data})\geq1$ so that 
\begin{equation}\label{cond.m}
    c2^{-m\alpha}\leq 1/4.
\end{equation}
Thus we have 
\begin{equation}\label{ineq.dini.ineqi1}
    I_1\leq \frac14\left[\sum_{k=1}^{l}E(u;B_{2^{-mk}r}(z))+E(u;B_r(z))\right].
\end{equation}
Using \eqref{ass.lem.dini} and \eqref{defn.omega.sec5}, we observe 
\begin{equation}\label{cond.dr}
    d(R)\coloneqq \int_{0}^{R}\frac{{\pmb{\omega}}(\rho)}{\rho}\,d\rho=\int_{0}^{\min\{R,1/8\}}\frac{{{\omega}}(\rho)}{\rho}\,d\rho\leq N\quad\text{for any }R>0.
\end{equation}
Moreover, we also get that for any $\epsilon>0$, there are $r_0\in(0,1/8]$ and $k_0\geq2$ such that 
    \begin{equation}\label{cond.r0k0}
        d(r_0)\leq \epsilon \quad\text{and}\quad 2^{-sk_0m}\leq \epsilon,
    \end{equation}
where $r_0$ and $k_0$ depend on $\mathsf{data}$ and $\epsilon$.
We now observe from \eqref{defn.omega.sec5} that 
\begin{align*}
        \int_{2^{km}R}^{2^{(k+1)m}R}\frac{{\pmb{\omega}}(\rho)}{\rho^{1+s}}\,d\rho\leq \int_{2^{km}R}^{2^{(k+1)m}R}\frac{{{\omega}}(2^{(k+1)m}R)}{(2^{km}R)^{1+s}}\,d\rho\leq c\frac{{\pmb{\omega}}(2^{(k+1)m}R)}{(2^{(k+1)m}R)^s}\quad\text{if } 2^{{k}m}R\leq 2^{-3-2m},
    \end{align*}
 \begin{align*}
        \int_{2^{km}R}^{2^{(k+1)m}R}\frac{{\pmb{\omega}}(\rho)}{\rho^{1+s}}\,d\rho\leq \int_{2^{-2m-3}}^{2^{-3}}\frac{{{\omega}}(\rho)}{\rho^{1+s}}\,d\rho\quad\text{if } 2^{-3+m}<2^{{k}m}R\leq 2^{-3}
    \end{align*}
    and
    \begin{align}\label{ineq0.sec5.dini}
        \int_{2^{km}R}^{2^{(k+1)m}R}\frac{{\pmb{\omega}}(\rho)}{\rho^{1+s}}\,d\rho=0 =\frac{{\pmb{\omega}}(2^{(k+1)m}R)}{(2^{(k+1)m}R)^s}\quad\text{if } 2^{km}R> 2^{-3}
    \end{align}
    for some constant $c=c(\mathsf{data})$.
Using these observations, we have for any $R\leq R_0\coloneqq 2^{-k_0m}r_0$ and $l\geq1$,
\begin{align*}
        \sum_{i=0}^l{\pmb{\omega}}_G(2^{-im}R&)\leq\sum_{i=0}^{l}(2^{-im}R)^{s}\int_{2^{-im}R}^\infty\frac{{\pmb{\omega}}(\rho)}{\rho^{1+s}}\,d\rho\\
        &\leq c\sum_{i=0}^{l}(2^{-im}R)^{s}\left[\sum_{j=0}^{k_0-1}\frac{{\pmb{\omega}}(2^{(-i+j)m}R)}{(2^{(-i+j)m}R)^{s}}+\sum_{j=0}^{\infty}\frac{{\pmb{\omega}}(2^{(-i+k_0+j)m}R)}{(2^{(-i+k_0+j)m}R)^{s}}+\int^{2^{-3}}_{2^{-2m-3}}\frac{\omega(\rho)}{\rho^{1+s}}\,d\rho\right]\\
        &\eqqcolon J_1+J_2+J_3
    \end{align*}
   for some constant $c=c(\mathsf{data})$
     We note that in the above estimates, the constant $c$ depends on $m$ but by virtue of \eqref{cond.m}, eventually it depends only on $\mathsf{data}$.
    By Fubini's theorem, \eqref{cond.r0k0}, we have 
    \begin{align*}
        J_1\leq \sum_{j=0}^{k_0-1}\sum_{i=0}^{l}{\pmb{\omega}}(2^{(-i+j)m}R)2^{-sjm}&\leq c\sum_{j=0}^{k_0-1}2^{-sjm}\sum_{i=0}^{l}\int_{2^{(-i+j)m}R}^{{2^{(-i+j+1)m}R}}\frac{{{\omega}}(2^{(-i+j)m}R)}{\rho}\,d\rho\\
        &\leq c\sum_{j=0}^{k_0-1}2^{-sjm}\sum_{i=0}^{l}\int_{2^{(-i+j)m}R}^{{2^{(-i+j+1)m}R}}\frac{{\pmb{\omega}}(\rho)}{\rho}\,d\rho\\
        &\leq c\sum_{j=0}^{k_0-1}d(2^{(j+1)m}R)2^{-sjm}\leq cd(r_0)\sum_{j=0}^{k_0-1}2^{-sjm}\leq c\epsilon,
    \end{align*}
    where $c=c(\mathsf{data})$.
    Similarly, we employ Fubini's theorem, \eqref{ineq0.sec5.dini} and \eqref{cond.dr} to see that
    \begin{align*}
        J_2&\leq c\sum_{j=0}^{\infty}\sum_{i=0}^{l}\frac{{\pmb{\omega}}(2^{(-i+k_0+j)m}R)}{2^{s(k_0+j)m}}\\
        &\leq  c\sum_{j=0}^{\infty}2^{-s(k_0+j)m}\left[\sum_{i=0}^{l}\int_{2^{(-i+j+k_0)m}R}^{{2^{(-i+j+1+k_0)m}R}}\frac{{\pmb{\omega}}(\rho)}{\rho}\,d\rho+2^{-s(k_0+j)m}\|\omega\|_{L^\infty(B_{1/8)}}\right]\\
        &\leq c\sum_{j=0}^{\infty}2^{-s(k_0+j)m}d(2^{(k_0+j+1)m}R)\leq c2^{-sk_0m}\leq c\epsilon
    \end{align*}
    for some constant $c=c(\mathsf{data},{{\omega}},N)$. After a few simple computations, we estimate $J_3$ as
    \begin{equation*}
        J_3\leq c\sum_{i=0}^l(2^{-im}R)^sN\leq cR_0^s\leq c2^{-sk_0m}\leq c\epsilon
    \end{equation*}
    for some $c=c(\mathsf{data},N)$, where we have used \eqref{cond.dr} and \eqref{cond.r0k0}.
    On the other hand, we have 
    \begin{equation*}
        \sum_{i=0}^{l}{\pmb{\omega}}(2^{-im}R)\leq c\sum_{i=0}^{l}\int_{2^{-im}R}^{2^{(-i+1)m}R}\frac{{{\omega}}(\rho)}{\rho}\,d\rho\leq cd(r_0)\leq c\epsilon.
    \end{equation*}
    Using this together with the estimates $J_1$, $J_2$ and $J_3$, we have if $R\leq R_0$
    \begin{align}\label{ineq2.dini}
        \sum_{i=0}^l({\pmb{\omega}}(2^{-im}R)+{\pmb{\omega}}_G(2^{-im}R))\leq c\epsilon
    \end{align}
    for some constant $c=c(\mathsf{data},{{\omega}},N)$. 
    
    We therefore estimate $I_2$ as
    \begin{equation}\label{ineq3.dini.i2}
    \begin{aligned}
        I_2&\leq c2^{-mn}\left[\sum_{k=0}^{l}({\pmb{\omega}}(2^{-mk}r)+{\pmb{\omega}}_G(2^{-mk}r))\right]\left[\sum_{i=1}^{l}{E}(u;B_{2^{-mi}r}(z))+E(u;B_r(z))+|(u/\oldphi)_{B_{r}(z)}|\right]\\
        &\leq c\epsilon\left(\sum_{k=0}^{l}{E}(u;B_{2^{-mk}r}(z))+E(u;B_r(z))+|(u/\oldphi)_{B_{r}(z)}|\right)
    \end{aligned}
    \end{equation}
    for some constant $c=c(\mathsf{data})$ provided that $r\leq R_0$, where we have used
    \begin{equation}\label{ineq4.dini}
    \begin{aligned}
        |(u/\oldphi)_{B_{2^{-km}r}}(z)|&\leq c\sum_{i=1}^{k}E_{\mathrm{loc}}(u;B_{2^{-im}r}(z)) +|(u/\oldphi)_{B_{r}(z)}|\\
        &\leq c\sum_{i=1}^{l}E_{\mathrm{loc}}(u;B_{2^{-im}r}(z)) +|(u/\oldphi)_{B_{r}(z)}|
    \end{aligned}
    \end{equation}
    and \eqref{ineq2.dini}. To estimate $I_3$, we first note from \cite[Lemma 2.2]{DieKimLeeNow24n} that
\begin{align*}
    &\max\{\oldphi(z),(2^{-mk}r)^s\}^{-1}{\mathrm{Tail}}(u(\cdot){\pmb{\omega}}(\cdot-z);B_{2^{-mk}r}(z))\\
    &\leq c\max\{\oldphi(z),(2^{-mk}r)^s\}^{-1}\sum_{i=1}^{k} 2^{-2s(im)}{\pmb{\omega}}(2^{(-k+i)m}r)\dashint_{B_{2^{(-k+i)m}r}(z)}|u|\,dy\\
    &\quad+c\max\{\oldphi(z),(2^{-mk}r)^s\}^{-1} 2^{-2skm}{\mathrm{Tail}}(u(\cdot){\pmb{\omega}}(\cdot-z);B_{r}(z))\coloneqq I_{3,1}(k)+I_{3,2}(k).
\end{align*}
By means of \eqref{ineq2.decay} with $\rho=1$, we estimate $I_{3,1}(k)$ as 
\begin{align*}
    I_{3,1}(k)&\leq c\sum_{i=0}^{k} 2^{-s(im)}{\pmb{\omega}}(2^{(-k+i)m}r)\dashint_{B_{2^{(-k+i)m}r}(z)}|u/\oldphi|\,dy\\
    &\leq c\sum_{i=0}^{k} 2^{-s(im)}{\pmb{\omega}}(2^{(-k+i)m}r)(E_{\mathrm{loc}}(u;B_{2^{(-k+i)m}r}(z))+|(u/\oldphi)_{B_{2^{(-k+i)m}r}(z)}|).
\end{align*}
In addition, we estimate $I_{3,2}(k)$ as 
\begin{align*}
    I_{3,2}(k)\leq c\max\{\oldphi(z),r^s\}^{-1} 2^{-skm}{\mathrm{Tail}}(u(\cdot){\pmb{\omega}}(\cdot-z);B_{r}(z)).
\end{align*}
By combining the estimates $I_{3,1}(k)$ and $I_{3,2}(k)$ along with Fubini's theorem, we have
\begin{align*}
    I_3&\leq c\sum_{k=0}^{l}I_{3,1}(k)+I_{3,2}(k)\\
    &\leq c\sum_{k=0}^{l}\sum_{i=0}^{k} 2^{-s(im)}{\pmb{\omega}}(2^{(-k+i)m}r)(E_{\mathrm{loc}}(u;B_{2^{(-k+i)m}r}(z))+|(u/\oldphi)_{B_{2^{(-k+i)m}r}(z)}|)\\
    &\quad+ c\max\{\oldphi(z),r^s\}^{-1}{\mathrm{Tail}}(u(\cdot){\pmb{\omega}}|\cdot|;B_{r}(z))\\
    &\leq c\sum_{i=0}^{l} \sum_{k=i}^{l} 2^{-s(im)}{\pmb{\omega}}(2^{(-k+i)m}r)(E_{\mathrm{loc}}(u;B_{2^{(-k+i)m}r}(z))+|(u/\oldphi)_{B_{2^{(-k+i)m}r}(z)}|)\\
    &\quad+ c\max\{\oldphi(z),r^s\}^{-1}{\mathrm{Tail}}(u(\cdot){\pmb{\omega}}(\cdot-z);B_{r}(z))
\end{align*}
for some constant $c=c(\mathsf{data})$.
We then employ \eqref{ineq2.dini} to further estimate $I_3$ as 
\begin{equation}\label{ineq5.dini.i3}
\begin{aligned}
    I_3 &\leq c\sum_{i=0}^{l}2^{-s(im)} \left[\sum_{k=i}^{l} {\pmb{\omega}}(2^{(-k+i)m}r)\right]\left[\sum_{j=0}^{l}E_{\mathrm{loc}}(u;B_{2^{-jm}r}(z))+E_{\mathrm{loc}}(u;B_r(z))+|(u/\oldphi)_{B_{r}(z)}|\right]\\
    &\quad+ c\max\{\oldphi(z),r^s\}^{-1}{\mathrm{Tail}}(u(\cdot){\pmb{\omega}}(\cdot-z);B_{r}(z))\\
    &\leq c\epsilon\sum_{i=0}^{l}2^{-s(im)}\left[\sum_{j=0}^{l}E_{\mathrm{loc}}(u;B_{2^{-jm}r}(z))+E_{\mathrm{loc}}(u;B_r(z))+|(u/\oldphi)_{B_{r}(z)}|\right]\\
    &\quad+c\max\{\oldphi(z),r^s\}^{-1}{\mathrm{Tail}}(u(\cdot){\pmb{\omega}}(\cdot-z);B_{r}(z))\\
    &\leq c\epsilon\left[\sum_{j=0}^{l}E_{\mathrm{loc}}(u;B_{2^{-jm}r}(z))+E_{\mathrm{loc}}(u;B_r(z))+|(u/\oldphi)_{B_{r}(z)}|\right]\\
    &\quad+c\max\{\oldphi(z),r^s\}^{-1}{\mathrm{Tail}}(u(\cdot){\pmb{\omega}}(\cdot-z);B_{r}(z)),
\end{aligned}
\end{equation}
where $c=c(\mathsf{data},{{\omega}},N)$ whenever $r\leq R_0$. 

We now estimate $I_4$ as 
\begin{equation}\label{ineq6.dini.i4}
\begin{aligned}
    I_4&\leq c\sum_{k=0}^{l}\int_{2^{-m(k+1)}r}^{2^{-mk}r}\left(\frac{\||f|^{2_*}\chi_{\Omega}\|_{L^1(B_{2^{-mk}r})(z)}}{(2^{-mk}r)^{n-2_* s}}\right)^{\frac{1}{2_*}}\frac{\,d\rho}{\rho}\\
    &\leq c\sum_{k=0}^{l}\int_{2^{-m(k+1)}r}^{2^{-mk}r}\left(\frac{\||f|^{2_*}\chi_{\Omega}\|_{L^1(B_{\rho}(z))}}{\rho^{n-2_* s}}\right)^{\frac{1}{2_*}}\frac{\,d\rho}{\rho}\\
    &\leq c\mathbf{W}^{|f|^{2_*}\chi_{\Omega}}_{\frac{2_* s}{2_*+1},2_*+1}(z,r),
\end{aligned}
\end{equation}
for some constant $c=c(\mathsf{data})$, where we have used \eqref{rmk.bdry}.
Combining all the estimates \eqref{ineq.dini.ineqi1}, \eqref{ineq3.dini.i2}, \eqref{ineq5.dini.i3} and \eqref{ineq6.dini.i4}, we obtain 
\begin{align*}
    \sum_{k=0}^{l}  E(u;B_{2^{-m(k+1)}r}(z))&\leq \frac{1}{4}\sum_{k=1}^{l}E(u;B_{2^{-mk}r}(z))+cE(u;B_r(z))\\
    &\quad +c\epsilon\left(\sum_{k=1}^{l}{E}(u;B_{2^{-mk}r}(z))+E(u;B_r(z))+|(u/\oldphi)_{B_{r}(z)}|\right)\\
    &\quad+c\max\{\oldphi(z),r^s\}^{-1}{\mathrm{Tail}}(u(\cdot){\pmb{\omega}}(\cdot-z);B_{r}(z))+c\mathbf{W}^{|f|^{2_*}\mbox{\Large$\chi$}_{\Omega}}_{\frac{2_* s}{2_*+1},2_*+1}(z,r)
\end{align*}
for some constant $c=c(\mathsf{data},{\omega},N)$, whenever $r\leq R_0$.
We now take $r_0$ and $k_0$ sufficiently small so that 
\begin{equation*}
    c\epsilon\leq 1/4,
\end{equation*}
where the constants $r_0$ and $k_0$ depend on $\mathsf{data}$, ${\omega}$ and $N$. Thus we now fix
\begin{equation}\label{dini.cho.r0}
    R_0=2^{-k_0}r_0
\end{equation} 
to see that for any $r\leq R_0$ 
\begin{equation}\label{dini.ineq.r0}
\begin{aligned}
     \sum_{k=0}^{l}  E(u;B_{2^{-m(k+1)}r}(z))&\leq c\left[E(u;B_r(z))+|(u/\oldphi)_{B_{r}(z)}|+{\mathrm{Tail}}\left(\frac{u}{r^s};B_{r}(z)\right)+\mathbf{W}^{|f|^{2_*}\mbox{\Large$\chi$}_{\Omega}}_{\frac{2_* s}{2_*+1},2_*+1}(z,r)\right]
\end{aligned}
\end{equation}
for some constant $c=c(\mathsf{data},{{\omega}},N)$, where we have used the fact that 
\begin{equation*}
    {\pmb{\omega}}(\rho)\leq \|\omega\|_{L^\infty(B_{1/8})}\quad\text{for any }\rho>0.
\end{equation*}
Using this along with the fact that 
\begin{align*}
    \dashint_{B_{2^{-lm}r}(z)}|u/\oldphi|\,dx&\leq E_{\mathrm{loc}}(u;B_{2^{-lm}r}(z))+|(u/\oldphi)_{B_{2^{-lm}r}(z)}|\leq \sum_{i=0}^{l}E_{\mathrm{loc}}(u;B_{2^{-im}r}(z))+|(u/\oldphi)_{B_{r}(z)}|,
\end{align*} we obtain that for any $l\geq0$ and $r\leq R_0$, there holds
\begin{align}\label{ineq7.dini}
    \dashint_{B_{2^{-lm}r}(z)}|u/\oldphi|\,dx\leq c\left({E}(u;B_r(z))+|(u/\oldphi)_{B_r(z)}|+{\mathrm{Tail}}(u/r^s;B_{r}(z))+\mathbf{W}^{|f|^{2_*}\mbox{\Large$\chi$}_{\Omega}}_{\frac{2_* s}{2_*+1},2_*+1}(z,r)\right),
\end{align}
where $c=c(\mathsf{data},{{\omega}},N)$. 
We now assume $r>R_0$. Let us fix a positive integer $l$. If $2^{-lm}r\geq 2^{-m}R_0$, then we have 
\begin{align*}
   \dashint_{B_{2^{-lm}r}(z)}|u/\oldphi|\,dx\leq c\dashint_{B_{r}(z)}|u/\oldphi|\,dx\leq  c\left({E}(u;B_r(z))+|(u/\oldphi)_{B_r(z)}|\right)
\end{align*}
for some constant $c=c(\mathsf{data},{{\omega}},N)$. If $2^{-lm}r< 2^{-m}R_0$, then there is a positive integer $i$ such that $2^{-(i+1)m}R_0\leq 2^{-lm}r<2^{-im}R_0$. By \eqref{ineq7.dini}, we have 
\begin{equation}\label{ineq8.dini}
\begin{aligned}
    \dashint_{B_{2^{-lm}r}(z)}|u/\oldphi|\,dx&\leq  c\dashint_{B_{2^{-im}R_0}(z)}|u/\oldphi|\,dx\\
    &\leq c\left[{E}(u;B_{R_0}(z))+c|(u/\oldphi)_{B_{R_0}(z)}|+{\mathrm{Tail}}(u;B_{R_0}(z))+\mathbf{W}^{|f|^{2_*}\mbox{\Large$\chi$}_{\Omega}}_{\frac{2_* s}{2_*+1},2_*+1}(z,R_0)\right]
\end{aligned}
\end{equation}
for some constant $c=c(\mathsf{data},{{\omega}},N)$. Using \eqref{ineq.simple}, \eqref{rel.phid1} and the fact that $d(z)\leq c(\Omega)$, we have
\begin{align*}
    {\mathrm{Tail}}(u;B_{R_0}(z))&\leq c\left({\mathrm{Tail}}(u/{r}^s;B_{r}(z))+\|u\|_{L^1(B_{r}(z))}\right)\\
    &\leq c\left({E}(u;B_{r}(z))+|(u/\oldphi)_{B_{r}(z)}|+{\mathrm{Tail}}(u/{r}^s;B_{r}(z))\right).
\end{align*}
Plugging this into \eqref{ineq8.dini} together with Lemma \ref{lem.sim.exc} yields \eqref{ineq7.dini} provided $r\in(R_0,1/64]$. This completes the proof.
\end{proof}
Using this lemma \ref{lem.dini}, we now prove a pointwise estimate of the maximal function of $u/d^s$.
\begin{lemma}\label{lem.dini.max}
    Let us assume \eqref{ass.lem.dini}. Then we have 
    \begin{align*}
        M^{\Omega}_{1/64}(u/d^s)(z)\leq c\|u\|_{L^1_{2s}(\bbR^n)}+c\mathbf{W}^{|f|^{2_*}\mbox{\Large$\chi$}_{\Omega}}_{\frac{2_* s}{2_*+1},2_*+1}(z,2^{-2})
    \end{align*}
    for some constant $c=c(\mathsf{data},N,{\pmb{\omega}})$.
\end{lemma}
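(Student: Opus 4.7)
The plan is to obtain the pointwise estimate by applying Lemma \ref{lem.dini} at the reference scale $r=1/64$ and then bounding the resulting initial-scale quantities by $\|u\|_{L^1_{2s}(\bbR^n)}$ and the Wolff potential truncated at radius $2^{-2}$.

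First, fix $\rho\in(0,1/64]$. Using the comparability $\oldphi\asymp d^s$ on $B_{3/4}$ from \eqref{rel.phid1}, the density estimate \eqref{rmk.bdry}, and the boundary vanishing $u\equiv 0$ on $B_{1/8}(z)\setminus\Omega$, one has $\dashint_{\Omega\cap B_\rho(z)}|u/d^s|\,dx\leq c\dashint_{B_\rho(z)}|u/\oldphi|\,dx$. Let $m=m(\mathsf{data})$ be the integer from Lemma \ref{lem.dini}, and pick the largest $i\geq 0$ with $2^{-im}/64\geq\rho$, so that $B_\rho(z)\subset B_{2^{-im}/64}(z)$ with volume ratio at most $2^{mn}$, yielding $\dashint_{B_\rho(z)}|u/\oldphi|\,dx\leq c\dashint_{B_{2^{-im}/64}(z)}|u/\oldphi|\,dx$. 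Applying Lemma \ref{lem.dini} with $r=1/64$ (for $i\geq 1$; the $i=0$ case follows directly from the definition of $E_{\mathrm{loc}}$) yields
\[
\dashint_{B_{2^{-im}/64}(z)}|u/\oldphi|\,dx\leq c\bigl(E(u;B_{1/64}(z))+|(u/\oldphi)_{B_{1/64}(z)}|+\mathrm{Tail}(64^s u;B_{1/64}(z))+\mathbf{W}(z,1/64)\bigr).
\]

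Next, I would bound each of the four initial-scale terms independently of $\rho$. The Wolff potential at radius $1/64$ is trivially dominated by the one at radius $2^{-2}=1/4$ by nonnegativity of the integrand. The term $\mathrm{Tail}(64^s u;B_{1/64}(z))$ is controlled by $c\|u\|_{L^1_{2s}(\bbR^n)}$ directly from the definition of the weighted $L^1$-norm. The remaining two terms $E(u;B_{1/64}(z))$ and $|(u/\oldphi)_{B_{1/64}(z)}|$ reduce, modulo a further tail contribution bounded in the same way, to controlling the single average $\dashint_{B_{1/64}(z)}|u/\oldphi|\,dx$.

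To bound this last average I would mirror the comparison scheme of \eqref{ineq3.thm.dini}--\eqref{ineq6.thm.dini} from the proof of Lemma \ref{lem.vmo}. Introducing the auxiliary solutions $w$ (to \eqref{eq.comp.w} on $B_{1/16}(z)$) and $v$ (to \eqref{eq.comp.v} with frozen coefficient $a_0=A_z$), the weighted local boundedness of Lemma \ref{lem.loc.const}, transferred from $d^s$ to $\oldphi$ via \eqref{rel.phid1}, controls $\|v/\oldphi\|_{L^\infty(\Omega\cap B_{1/64}(z))}$, while the two-step comparison of Lemmas \ref{comp1.conti}--\ref{comp2.conti} together with the Hardy-type bound of Lemma \ref{lem.sobpoi.ds} controls $\dashint_{B_{1/64}(z)}|(u-v)/\oldphi|\,dx$ by $\|u\|_{L^1_{2s}(\bbR^n)}$ plus the $L^{2_*}$-average of $f$ on $\Omega\cap B_{1/8}(z)$. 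The latter is absorbed into $c\mathbf{W}^{|f|^{2_*}\mbox{\Large$\chi$}_{\Omega}}_{\frac{2_*s}{2_*+1},2_*+1}(z,1/4)$ by restricting the Wolff-potential integral to $\rho\in[1/8,1/4]$.

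Combining the three paragraphs above and taking the supremum over $\rho\in(0,1/64]$ delivers the claimed pointwise bound on $M^\Omega_{1/64}(u/d^s)(z)$. The main obstacle is the initial-scale comparison argument in the last step: the denominators involve the barrier $\oldphi$ rather than $d^s$, so one switches between the two repeatedly using \eqref{rel.phid1} and Lemma \ref{lem.sobpoi.ds}, and the scales must be tracked carefully so that the final bound lands on the Wolff potential truncated precisely at the radius $2^{-2}$ appearing in the statement.
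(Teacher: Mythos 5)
Your proposal is correct and takes essentially the same route as the paper: apply Lemma \ref{lem.dini} at the top scale $r=2^{-6}$, reduce to bounding the initial-scale excess $E(u;B_{2^{-6}}(z))$ and the average $|(u/\oldphi)_{B_{2^{-6}}(z)}|$ by $\|u/\oldphi\|_{L^1(B_{2^{-6}}(z))}+\|u\|_{L^1_{2s}(\bbR^n)}$ (using \eqref{rel.phid1} and the bounded tail of $\oldphi$), control that $L^1$ average by a two-step comparison via Lemmas \ref{comp1.conti}--\ref{comp2.conti} and the weighted boundedness of Lemma \ref{lem.loc.const} plus the Hardy inequality of Lemma \ref{lem.sobpoi.ds}, and absorb the resulting $\|f\|_{L^{2_*}}$ norm into the Wolff potential truncated at $2^{-2}$ by restricting the integral to $\rho\in[2^{-3},2^{-2}]$. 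The only cosmetic difference is that you re-derive the $L^1$-bound for $u/d^s$ in place (with the frozen kernel $A_z$), whereas the paper simply invokes the already established estimate \eqref{ineq2.lem.vmo} from Section 4.
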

\begin{proof}
    Let us fix $r\in(0,1/64]$. Then there is a positive integer $i$ such that $2^{-im-6}<r\leq 2^{-(i-1)m-6}$. By Lemma \ref{lem.dini}, we have
    \begin{equation}\label{ineq1.lem.dini.max}
        \dashint_{B_r(z)}|u/\oldphi|\,dx\leq c{E}(u;B_{2^{-6}}(z))+c|(u/\oldphi)_{B_{2^{-6}}(z)}|+c{\mathrm{Tail}}(u;B_{2^{-6}}(z))+c\mathbf{W}^{|f|^{2_*}\mbox{\Large$\chi$}_{\Omega}}_{\frac{2_* s}{2_*+1},2_*+1}(z,2^{-6})
    \end{equation}
    for some constant $c=c(\mathsf{data},N,{\pmb{\omega}})$. Moreover, we observe 
    \begin{equation}\label{ineq2.lem.dini.max}
    \begin{aligned}
        {E}(u;B_{2^{-6}}(z))+|(u/\oldphi)_{B_{2^{-6}}(z)}|&\leq c\left(\|u/\oldphi\|_{L^1(B_{2^{-6}}(z))}+\|u\|_{L^1_{2s}}(\bbR^n)\right)\\
        &\leq c\left(\|u/d^s\|_{L^1(B_{2^{-6}}(z))}+\|u\|_{L^1_{2s}}(\bbR^n)\right)\\
        &\leq c\|u\|_{L^1_{2s}(\bbR^n)}+c\|f\|_{L^{2_*}(\Omega\cap B_{2^{-3}}(z))}
    \end{aligned}
    \end{equation}
 for some constant $c=c(\mathsf{data},N,{\pmb{\omega}})$, where we have used \eqref{rel.phid1}, \eqref{ineq2.lem.vmo} and
    \begin{align*}
        \int_{\bbR^n\setminus B_{2^{-6}}(z)}\frac{|\oldphi(y)|}{|y-z|^{n+2s}}\,dy\leq c.
        \end{align*}
Using this, we further estimate the right-hand side of \eqref{ineq1.lem.dini.max} as 
\begin{align*}
    \dashint_{B_r(z)}|u/\oldphi|\,dx\leq c\left(\|u\|_{L^1_{2s}(\bbR^n)}+\mathbf{W}^{|f|^{2_*}\mbox{\Large$\chi$}_{\Omega}}_{\frac{2_* s}{2_*+1},2_*+1}(z,2^{-2})\right).
\end{align*} 
Since this holds for every $r\in(0,1/64]$, we have the desired estimate.
\end{proof}

\begin{remark}\label{rmk.hol.sec5}
    We note that if $\omega(\rho)\leq L\rho^\alpha$ for some $L\geq1$, where $\rho\in[0,1/8]$, then we can choose the constants $r_0$ and $\epsilon$ given in \eqref{cond.r0k0}, where such constants depend only on $\mathsf{data}$, $L$ and $\alpha$. Therefore, \eqref{ineq.lem.dini} holds with the constant $c=c(\mathsf{data},L,\alpha)$.
\end{remark}
We now use Lemma \ref{lem.dini} to obtain a point-wise estimate of the following nonlocal sharp maximal function
\begin{equation}\label{defn.glosharp}
    {N}^{\sigma}_{1/64}(u;z)\coloneqq\sup_{0<r<1/64}\frac{{E}(u;B_r(z))}{r^\sigma},
\end{equation}
where $\sigma\in(0,\alpha)$ and $z\in B_{1/4}$.  We note that such a maximal function is first introduced in \cite{DieNow23}.
\begin{lemma}\label{lem.hol}
Let us assume for any $\rho\in(0,1/8]$,
\begin{equation}\label{ass.lem.hol}
    {{\omega}}(\rho)\leq L\rho^\alpha,
\end{equation}
where $L>0$ and $u\equiv 0$ in $\bbR^n\setminus B_2$. Then for any $\sigma\in(0,\alpha)$, we have 
\begin{equation}\label{ineq.lem.hol}
\begin{aligned}
    {N}^{\sigma}_{1/64}(u;z)&\leq c\left(\|u\|_{L^1_{2s}(\bbR^n)}+\mathbf{W}^{|f|^{2_*}}_{\frac{2_* s}{2_*+1},2_*+1}(z,1/4)+\left({M}^{\Omega}_{2_*(s-\sigma),1/4}(|f|^{2_*})(z)\right)^{\frac1{2_*}}\right),
\end{aligned}
\end{equation}for some constant $c=c(\mathsf{data},\alpha,L,\sigma)$.
\end{lemma}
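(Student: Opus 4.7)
The plan is to upgrade the qualitative decay of Lemma \ref{lem.decay} into a quantitative power-decay $E(u;B_r(z))\lesssim r^\sigma$ by iterating at a fixed geometric ratio, exploiting the fact that under the H\"older hypothesis \eqref{ass.lem.hol} each error term in that decay estimate shrinks at a rate strictly faster than $r^\sigma$. Throughout I will abbreviate
\[
\mathbf{D}(z) \coloneqq \|u\|_{L^1_{2s}(\bbR^n)} + \mathbf{W}^{|f|^{2_*}}_{\frac{2_*s}{2_*+1},2_*+1}(z,1/4) + \bigl({M}^{\Omega}_{2_*(s-\sigma),1/4}(|f|^{2_*})(z)\bigr)^{\frac{1}{2_*}},
\]
which is exactly the background quantity on the right-hand side of \eqref{ineq.lem.hol}.

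As a preliminary, note that \eqref{ass.lem.hol} gives ${\pmb{\omega}}(r)\leq Lr^\alpha$ and, by directly estimating the defining integral together with \eqref{defn.omega.sec5}, ${\pmb{\omega}}_G(r) \leq c\,L\,r^{\min\{\alpha,s\}}$ (with a harmless logarithmic correction when $\alpha=s$). By Remark \ref{rmk.hol.sec5}, Lemma \ref{lem.dini.max} applies and controls both $|(u/\oldphi)_{B_r(z)}|$ and $E(u;B_r(z))$ by $c\,\mathbf{D}(z)$. The remaining ingredient is a bound on the weighted nonlocal tail $\max\{\oldphi(z),r^s\}^{-1}\mathrm{Tail}(u(\cdot){\pmb{\omega}}(|\cdot-z|);B_r(z))$: I would dyadically decompose $B_{1/8}(z)\setminus B_r(z)$ into annuli, apply the bound $|\oldphi(y)|\leq c\max\{\oldphi(z),(2^k r)^s\}$ from \eqref{ineq2.decay}, the dyadic average estimate for $u/\oldphi$ from Lemma \ref{lem.dini}, and the elementary ratio inequality $\max\{\oldphi(z),(2^k r)^s\}/\max\{\oldphi(z),r^s\} \leq c\,2^{ks}$. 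Only scales up to $1/8$ contribute because ${\pmb{\omega}}\equiv 0$ beyond, and $u\equiv 0$ outside $B_2$; together this should produce a bound of $c\,L\,r^\alpha\,\mathbf{D}(z)$.

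Next, fix $\rho=2^{-m}$ with $m=m(\mathsf{data},\alpha,\sigma)$ large enough so that $c\rho^\alpha\leq \tfrac12\rho^\sigma$, which is possible since $\sigma<\alpha$. Applying Lemma \ref{lem.decay} and using the preliminaries together with the identity $r^s = r^\sigma\cdot r^{s-\sigma}$ to convert the $f$-term into a factor of $r^\sigma$ times the fractional maximal function embedded in $\mathbf{D}(z)$, I obtain the one-step improvement
\[
E(u;B_{\rho r}(z)) \leq \tfrac12\rho^\sigma E(u;B_r(z)) + c\,(r^\beta + r^\sigma)\,\mathbf{D}(z), \qquad \beta\coloneqq\min\{\alpha,s\}>\sigma,
\]
valid for every $r\leq 1/64$. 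Iterating along $r_k=2^{-mk}r_0$ with $r_0\leq 1/64$ and telescoping — the geometric sum converges because $\beta>\sigma$ — gives $E(u;B_{r_l}(z))\leq c\,r_l^\sigma\,\mathbf{D}(z)$. Filling in intermediate radii via Lemma \ref{lem.sim.exc} and dividing by $r^\sigma$ before taking the supremum in $r<1/64$ yields \eqref{ineq.lem.hol}.

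The main obstacle is the preparatory control on the weighted tail term, which mixes the boundary behaviour of $\oldphi$, the H\"older weight ${\pmb{\omega}}(|y-z|)\leq L|y-z|^\alpha$, and dyadic averages of $u/\oldphi$: the prefactor $\max\{\oldphi(z),r^s\}^{-1}$ behaves qualitatively differently in the interior regime $r\ll d(z)$ and in the boundary regime $r\gtrsim d(z)$, and in each case one must combine these three ingredients to produce a clean $r^\alpha\mathbf{D}(z)$ estimate that is uniform in $d(z)$. Once this tail bound is established, the remaining iteration argument is standard.
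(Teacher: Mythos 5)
Your proposal is correct and follows essentially the same route as the paper: the same decay estimate (Lemma \ref{lem.decay}) applied at a fixed ratio $\rho$ chosen so that $c\rho^{\alpha-\sigma}$ is small, the same control of the coefficient and weighted-tail error terms by $cr^{\alpha}$ times the data (via Remark \ref{rmk.hol.sec5}, Lemma \ref{lem.dini.max} and the dyadic bound \eqref{ineq2.decay}), and the same extraction of the factor $r^{\sigma}$ from the $f$-term through $M^{\Omega}_{2_*(s-\sigma),1/4}(|f|^{2_*})$. The only difference is cosmetic: you conclude by explicit geometric iteration along $r_k=\rho^k r_0$ and fill in intermediate radii with Lemma \ref{lem.sim.exc}, whereas the paper absorbs the one-step inequality into the truncated maximal function ${N}^{\sigma,\epsilon}_{2^{-6}}(u;z)$ and lets $\epsilon\to 0$.
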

\begin{remark}
    We note from Remark \ref{rmk.sec5} that we always assume $u\equiv 0$ in $\bbR^n\setminus B_2$.
\end{remark}

\begin{proof}
Let us fix $\rho\in(0,1/2]$ which will be determined later. Then for any $r\in(0,\rho/64]$, Lemma \ref{lem.decay} yields
\begin{align*}
    {E}(u;B_{r}(z))&\leq c\rho^\alpha{E}(u;B_{r/\rho}(z))\\
         &\quad+ c\rho^{-n}({\pmb{\omega}}(r/\rho)+{\pmb{\omega}}_G(r/\rho))({E}(u;B_{r/\rho}(z))+|(u/\oldphi)_{B_{r/\rho}(z)}|)\\
         &\quad+ c\rho^{-n}\max\{\oldphi(z),(r/\rho)^s\}^{-1}{\mathrm{Tail}}(u(\cdot){\pmb{\omega}}(|\cdot-z|);B_{r/\rho}(z))\\
         &\quad+c\rho^{-n}\left(\dashint_{\Omega\cap B_{r/\rho}(z)}((r/\rho)^s|f|)^{2_*}\,dx\right)^{\frac1{2_*}}\coloneqq\sum_{i=1}^{4}I_i,
\end{align*}
where $c=c(\mathsf{data})$. We now choose $\rho=\rho(\mathsf{data},\sigma)$ such that 
\begin{equation}\label{cho.rho.lem.hol}
    c\rho^{\alpha-\sigma}\leq 1/4
\end{equation}
to see that
\begin{equation}\label{ineq1.lem.hol}
    I_1\leq \frac{\rho^{\sigma}}{4}E(u;B_{r/\rho}(z)).
\end{equation}
By \eqref{ass.lem.hol}, \eqref{defn.omega.sec5} and \eqref{cho.rho.lem.hol}, we have 
\begin{equation*}
    {\pmb{\omega}}(r/\rho)\leq cr^\alpha
\end{equation*}
and
\begin{equation*}
    {\pmb{\omega}}_G(r/\rho)=(r/\rho)^s\int_{r/\rho}^\infty\frac{{\pmb{\omega}}(\xi)}{\xi^{1+s}}\,d\xi\leq (r/\rho)^s\int_{r/\rho}^{1/8}\frac{\omega(\xi)}{\xi^{1+s}}\,d\xi\leq c(r/\rho)^{\alpha}\leq cr^\alpha
\end{equation*}
for some constant $c=c(\mathsf{data},L,\alpha)$.
Therefore, we have
\begin{equation*}
   I_2\leq cr^\alpha({E}(u;B_{r/\rho}(z))+|(u/\oldphi)_{B_{r/\rho}(z)}|)
\end{equation*}
for some constant $c=c(\mathsf{data},L,\alpha,\sigma)$. Since $r/\rho\leq1/64$, there is a positive integer $i$ such that 
\begin{equation}\label{i.lem.hol}
    2^{-6-im}\leq r/\rho\leq 2^{-6-(i-1)m}.
\end{equation} 
By Remark \ref{rmk.hol.sec5}, we have for any nonnegative integer $k\in[0,(i-1)m]$,
\begin{align}\label{ineq11.lem.hol}
  \dashint_{B_{2^kr/\rho}(z)}|u/\oldphi|\,dx&\leq c\left(\|u\|_{L^1_{2s}(\bbR^n)}+\mathbf{W}^{|f|^{2_*}\mbox{\Large$\chi$}_{\Omega}}_{\frac{2_* s}{2_*+1},2_*+1}(z,2^{-2})\right)
\end{align}
for some $c=c(\mathsf{data},L,\alpha,\sigma)$. As in the estimate of $J_1$ and $J_2$ given in \eqref{ineq.tail.decay} and \eqref{ineq3.decay}, we get 
\begin{equation}\label{ineq12.lem.hol}
\begin{aligned}
    \max\{\oldphi(z),(r/\rho)^s\}^{-1}\mathrm{Tail}(u-(u/\oldphi)_{B_{r/\rho}(z)}\oldphi;B_{r/\rho}(z))&\leq c\sum_{k=0}^{(i-1)m}2^{-ks}E_{\mathrm{loc}}(u;B_{2^kr/\rho}(z))\\
    &\quad+c2^{-(i-1)ms}E(u;B_{2^{-3}}(z)).
\end{aligned}
\end{equation}
Using \eqref{ineq11.lem.hol}, \eqref{ineq12.lem.hol} and \eqref{ineq2.lem.dini.max}, we have 
\begin{equation}\label{ineq2.lem.hol}
\begin{aligned}
    I_2&\leq cr^\alpha\left(\sum_{k=0}^{(i-1)m}2^{-ks}E_{\mathrm{loc}}(u;B_{2^kr/\rho}(z))+2^{-(i-1)ms}E(u;B_{2^{-3}}(z))\right)\\
    &\leq  cr^\alpha\left(\|u\|_{L^1_{2s}(\bbR^n)}+\mathbf{W}^{|f|^{2_*}\mbox{\Large$\chi$}_{\Omega}}_{\frac{2_* s}{2_*+1},2_*+1}(z,2^{-2})\right),
\end{aligned}
\end{equation}
where $c=c(\mathsf{data},L,\alpha,\sigma)$. We now estimate $I_3$. By \cite[Lemma 2.2]{DieKimLeeNow24n}, we have
\begin{align*}
    I_3&\leq   c\rho^{-n}\max\{\oldphi(z),(r/\rho)^{s}\}^{-1}(r/\rho)^{2s}\int_{\bbR^n\setminus B_{r/\rho}(z)}\frac{|u(y)|}{|y-z|^{n+2s-\alpha}}\,dy\\
    &\leq c\rho^{-n}\max\{\oldphi(z),(r/\rho)^{s}\}^{-1}(r/\rho)^{2s}\sum_{k=0}^{(i-1)m}(2^kr/\rho)^{-2s+\alpha}\dashint_{B_{2^kr/\rho}(z)}|u|\,dy\\
    &\quad+ c\rho^{-n}\max\{\oldphi(z),(r/\rho)^{s}\}^{-1}(r/\rho)^{2s}\dashint_{B_{2^{-6}}(z)}|u|\,dy\\
    &\quad+c\rho^{-n}\max\{\oldphi(z),(r/\rho)^{s}\}^{-1}(r/\rho)^{2s}\int_{\bbR^n\setminus B_{2^{-6}}(z)}\frac{|u(y)|}{|y-z|^{n+2s-\alpha}}\,dy\coloneqq \sum_{j=1}^{3}I_{3,j}.
\end{align*}
By \eqref{ineq2.decay}, we estimate $I_{3,1}$ and $I_{3,2}$ as 
\begin{align*}
    I_{3,1}+I_{3,2}&\leq cr^{2s}\left[\sum_{k=0}^{(i-1)m}2^{ks}(2^kr)^{-2s+\alpha}\dashint_{B_{2^kr/\rho}(z)}|u/\oldphi|\,dy+2^{ims}\dashint_{B_{2^{-6}}(z)}|u/\oldphi|\,dy\right]\\
    &\leq cr^\alpha\left(\|u\|_{L^1_{2s}(\bbR^n)}+\mathbf{W}^{|f|^{2_*}\mbox{\Large$\chi$}_{\Omega}}_{\frac{2_* s}{2_*+1},2_*+1}(z,2^{-2})\right)
\end{align*}
for some constant $c=c(\mathsf{data},L,\alpha,\sigma)$, where we have used Lemma \ref{lem.dini.max}, \eqref{ineq1.lem.dini.max} and \eqref{i.lem.hol}. Thus we have 
\begin{equation}\label{ineq3.lem.hol}
\begin{aligned}
    I_3&\leq cr^\sigma\left(\|u\|_{L^1_{2s}(\bbR^n)}+\mathbf{W}^{|f|^{2_*}\mbox{\Large$\chi$}_{\Omega}}_{\frac{2_* s}{2_*+1},2_*+1}(z,2^{-2})\right)
\end{aligned}
\end{equation}
for some constant $c=c(\mathsf{data},L,\alpha,\sigma)$, where we have used the fact that $u\equiv 0$ in $\bbR^n\setminus B_2$ and $\sigma<\alpha$. We next estimate $I_4$ as 
\begin{align}\label{ineq4.lem.hol}
    I_4\leq cr^{\sigma}\left(r^{2_*(s-\sigma)}\dashint_{\Omega\cap B_r(z)}|f|^{2_*}\,dx\right)^{\frac{1}{2_*}}\leq cr^{\sigma}\left(M^{\Omega}_{2_*(s-\sigma),1/4}(|f|^{2_*})(z))\right)^{\frac{1}{2_*}},
\end{align}
where $c=c(\mathsf{data},L,\alpha,\sigma)$.
Therefore, combining four estimates $I_1,I_2,I_3$ and $I_4$ given in \eqref{ineq1.lem.hol}, \eqref{ineq2.lem.hol}, \eqref{ineq3.lem.hol} and \eqref{ineq4.lem.hol}, respectively,  we discover that for $r\leq \rho/64$,
\begin{equation}\label{ineq5.lem.hol}
\begin{aligned}
    E(u;B_{r}(z))/r^{\sigma}&\leq \frac{1}{4}E(u;B_{r/\rho}(z))/(r/\rho)^{\sigma}\\
    &\quad+c\left(\|u\|_{L^1_{2s}(\bbR^n)}+\mathbf{W}^{|f|^{2_*}\mbox{\Large$\chi$}_{\Omega}}_{\frac{2_* s}{2_*+1},2_*+1}(z,1/4)+\left(M^{\Omega}_{2_*(s-\sigma),1/4}(|f|^{2_*})(z))\right)^{\frac{1}{2_*}}\right)
\end{aligned}
\end{equation}
where $c=c(\mathsf{data},L,\alpha,\sigma)$. In addition, for $r> \rho /64$, we note from \eqref{ineq2.lem.dini.max} that 
\begin{align}\label{ineq6.lem.hol}
    E(u;B_{r}(z))/r^{\sigma}&\leq cE(u;B_{1/8}(z))\leq c\left(\|u\|_{L^1_{2s}(\bbR^n)}+\mathbf{W}^{|f|^{2_*}\mbox{\Large$\chi$}_{\Omega}}_{\frac{2_* s}{2_*+1},2_*+1}(z,1/4)\right)
\end{align}
for some constant $c=c(\mathsf{data},L,\alpha,\sigma)$. To get the desired estimate \eqref{ineq.lem.hol}, we now consider
\begin{equation*}
    {N}^{\sigma,\epsilon}_{2^{-6}}(u;z)\coloneqq\sup_{\epsilon/8<r<2^{-6}}\frac{{E}(u;B_r(z))}{r^\sigma}
\end{equation*}
for any $\epsilon\in(0,1)$. Then by \eqref{ineq5.lem.hol} and \eqref{ineq6.lem.hol}, we have 
\begin{align*}
    {N}^{\sigma,\epsilon}_{2^{-6}}(u;z)&\leq c\left(\|u\|_{L^1_{2s}(\bbR^n)}+\mathbf{W}^{|f|^{2_*}\mbox{\Large$\chi$}_{\Omega}}_{\frac{2_* s}{2_*+1},2_*+1}(z,1/4)+\left(M^{\Omega}_{2_*(s-\sigma),1/4}(|f|^{2_*})(z))\right)^{\frac{1}{2_*}}\right)
\end{align*}
for some constant $c=c(\mathsf{data},L,\alpha,\sigma)$. By taking $\epsilon\to0$, we get \eqref{ineq.lem.hol}.
\end{proof}

\subsection{Proof of Main Theorems}
In this subsection, we are going to prove our main results for continuous coefficient case, namely Theorem \ref{thm.dini}, Corollary \ref{cor.cs}, Theorem \ref{thm.hol.1} and Theorem \ref{thm.hol.2}. 

We start by proving Theorem \ref{thm.dini}. 
\begin{proof}[Proof of Theorem \ref{thm.dini}]
We first note that since $A$ is Dini-continuous in $B_{1}\times B_1$, $A$ is also VMO in $B_{1}\times B_1$. Therefore, by \eqref{ineq.simple} and Theorem \ref{thm.vmo}, we have $u\in L^{n/s}(B_{7/8})$  with 
\begin{equation}\label{ineq1.thm.dini}
\begin{aligned}
    \|u\|_{L^{n/s}(B_{7/8})}\leq c\max\{1,d(0)^s\}\|u/d^s\|_{L^{n/(2s)}(B_{7/8})}&\leq c(\|u\|_{L^1_{2s}(\bbR^n)}+\|f\|_{L^{{n}/{(2s)}}(\Omega\cap B_{1})})\\
    &\leq c(\|u\|_{L^1_{2s}(\bbR^n)}+\|f\|_{L^{{n}/{s},1}(\Omega\cap B_{1})})
\end{aligned}
\end{equation}
for some constant $c=c(\mathsf{data},\omega)$. Let us fix $z\in \overline{\Omega}\cap B_{1/2}$. We are going to prove
\begin{equation}\label{ineq2.thm.dini}
    M^{\Omega}_{2^{-6}}(u/d^s)(z)\leq c(\|u\|_{L^1_{2s}(\bbR^n)}+\|f\|_{L^{{n}/{s},1}(\Omega\cap B_{1})})
\end{equation}
for some constant $c=c(\mathsf{data},\omega)$. To do this, we first use Lemma \ref{lem.dini.max} to see that 
\begin{equation*}
    M^{\Omega}_{2^{-6}}(\widetilde{u}/d^s)(z)\leq c\|\widetilde{u}\|_{L^1_{2s}(\bbR^n)}+c\mathbf{W}^{|F|^{2_*}\mbox{\Large$\chi$}_{\Omega}}_{\frac{2_* s}{2_*+1},2_*+1}(z,2^{-2}),
\end{equation*}
where $\widetilde{u}\in W^{s,2}(\bbR^n)$ is a weak solution to \eqref{eq: sec5.loc}.
By the fact that $\widetilde{u}=u$ in $B_{5/64}(z)$ and $|\widetilde{u}(x)|\leq |u(x)|$, we have
\begin{align}\label{ineq8.thm.dini}
    M^{\Omega}_{2^{-6}}({u}/d^s)(z)&\leq c\|u\|_{L^1_{2s}(\bbR^n)}+c\mathbf{W}^{|F|^{2_*}\mbox{\Large$\chi$}_{\Omega}}_{\frac{2_* s}{2_*+1},2_*+1}(z,2^{-2})
\end{align}
for some constant $c=c(\mathsf{data},\omega)$. It remains to estimate the last term given in the right-hand side of \eqref{ineq8.thm.dini}. To this end, we use \eqref{sec5.F} to get that 
\begin{align}\label{ineq81.thm.dini}
    \mathbf{W}^{|F|^{2_*}\mbox{\Large$\chi$}_{\Omega}}_{\frac{2_* s}{2_*+1},2_*+1}(z,2^{-2})\leq c\left[\mathbf{W}^{|f|^{2_*}\mbox{\Large$\chi$}_{\Omega}}_{\frac{2_* s}{2_*+1},2_*+1}(z,2^{-2})+\mathbf{W}^{|u|^{2_*}\mbox{\Large$\chi$}_{\Omega}}_{\frac{2_* s}{2_*+1},2_*+1}(z,2^{-2})+\mathrm{Tail}(u;B_{5/8}(z))\right]
\end{align}
for some constant $c=c(n,s,\Lambda)$. We first observe from \eqref{ineq2.wolff} that 
\begin{equation}\label{ineq82.thm.dini}
    \mathbf{W}^{|f|^{2_*}\mbox{\Large$\chi$}_{\Omega}}_{\frac{2_* s}{2_*+1},2_*+1}(z,2^{-2})\leq c\||f|^{2_*}\|_{L^{n/(2_*s),1/2_*}(\Omega\cap B_{7/8})}\leq c\|f\|_{L^{n/s,1}(\Omega\cap B_{1})}
\end{equation}
for some constant $c=c(n,s)$, where we have used a simple property of Lorentz space for the last inequality. On the other hand, by \eqref{ineq1.thm.dini}, we have 
\begin{equation}\label{ineq83.thm.dini}
    \mathbf{W}^{|u|^{2_*}\mbox{\Large$\chi$}_{\Omega}}_{\frac{2_* s}{2_*+1},2_*+1}(z,2^{-2})\leq c\|u\|_{L^{n/s,1}(\Omega\cap B_{7/8})}\leq c(\|u\|_{L^1_{2s}(\bbR^n)}+\|f\|_{L^{n/s,1}(\Omega\cap B_{1})})
\end{equation}
for some constant $c=c(\mathsf{data},\omega)$. Combining \eqref{ineq81.thm.dini}, \eqref{ineq82.thm.dini} and \eqref{ineq83.thm.dini} gives 
\begin{align}
    \mathbf{W}^{|F|^{2_*}\mbox{\Large$\chi$}_{\Omega}}_{\frac{2_* s}{2_*+1},2_*+1}(z,2^{-3})\leq c(\|u\|_{L^1_{2s}(\bbR^n)}+\|f\|_{L^{n/s,1}(\Omega\cap B_{1})})
\end{align}
for some constant $c=c(\mathsf{data},\omega)$. Plugging this into \eqref{ineq8.thm.dini} yields 
\begin{align*}
    M^{\Omega}_{2^{-6}}({u}/d^s)(z)&\leq c\|u\|_{L^1_{2s}(\bbR^n)}+c\|f\|_{L^{n/s,1}(\Omega\cap B_{1})}),
\end{align*}
where $c=c(\mathsf{data},\omega)$. Since  $z\in\Omega\cap B_{1/2}$ was arbitrary, we have 
\begin{align*}
    \sup_{z\in\Omega\cap B_{1/2}}M^{\Omega}_{2^{-6}}({u}/d^s)(z)&\leq c\left(\|u\|_{L^1_{2s}(\bbR^n)}+\|f\|_{L^{n/s,1}(\Omega\cap B_{1})}\right),
\end{align*}
which implies the desired estimate.
\end{proof}

Using Theorem \ref{thm.dini}, we now prove Corollary
 \ref{cor.cs}.
\begin{proof}[Proof of Corollary \ref{cor.cs}]
    Let $x,y\in  B_{1/2}$. We distinguish the following cases to complete the proof.\\
    \texttt{Case (a)}:   Suppose 
    \begin{equation*}
        5|x-y|\geq \max\{d(x),d(y)\}.
    \end{equation*}
    Then by Theorem \ref{thm.dini}, we have
    \begin{equation}\label{ineq1.cor.hol}
    \begin{aligned}
        |u(x)-u(y)|&\leq |u(x)|+|u(y)|\\
        &\leq c(\|u\|_{L^1_{2s}(\bbR^n)}+\|f\|_{L^{n/s,1}(\Omega\cap B_{1})})(|d^s(x)|+|d^s(y)|)\\
        &\leq c(\|u\|_{L^1_{2s}(\bbR^n)}+\|f\|_{L^{n/s,1}(\Omega\cap B_{1})})|x-y|^{s}
    \end{aligned}
    \end{equation}
    for some constant $c=c(\mathsf{data})$.\\ 
  \texttt{Case (b)}: Suppose 
    \begin{equation*}
        5|x-y|\leq \max\{d(x),d(y)\}.
    \end{equation*}
    We may assume $d(x)\geq d(y)$. Then $y\in B_{d(x)}(x)\subset \Omega$. Since $A$ is VMO in $B_1\times B_1$ and $f\in L^{n/s,1}(\Omega\cap B_1)\subset L^{n/s,\infty}(\Omega\cap B_1)$, we note from \cite[Theorem 1.7]{DieNow23} that 
    \begin{equation}\label{ineq2.cor.hol}
       \frac{|u(x)-u(y)|}{|x-y|^s}\leq \frac{c}{d^s(x)}\left(\|u\|_{L^\infty(B_{2d(x)/3}(x))}+\mathrm{Tail}(u;B_{2d(x)/3}(x))+d^s(x)\|f\|_{L^{n/s,1}(\Omega\cap B_{d(x)}(x))}\right)
    \end{equation}
    for some constant $c=c(\mathsf{data},\omega)$.
    Let us fix a positive integer $l$ such that 
    \begin{equation}\label{l.cor.hol}
        1/16\leq 2^ld(x)<1/8\quad\text{and}\quad  B_{2^ld(x)}(x)\subset B_{3/4}.
    \end{equation}
    By \eqref{ineq2.decay} together with \eqref{rel.phid1}, there is a constant $c=c(\mathsf{data})$ such that
    \begin{align*}
        |d(y)|\leq c2^kd(x)\quad\text{for any }y\in B_{2^kd(x)}(x).
    \end{align*}
    Thus we have 
    \begin{equation*}
        \|u\|_{L^\infty\left(B_{2^kd(x)}(x)\right)}\leq cd^s(x)2^{ks}\|u/d^s\|_{L^\infty\left(B_{2^kd(x)}(x)\right)}\leq cd^s(x)2^{ks}\|u/d^s\|_{L^\infty\left(\Omega\cap B_{1/8}(x)\right)}
    \end{equation*}
    for some constant $c=c(\mathsf{data})$.
   Therefore, using this, \cite[Lemma 2.2]{DieKimLeeNow24n} and \eqref{l.cor.hol}, we get
    \begin{align*}
    \|u\|_{L^\infty(B_{2d(x)/3}(x))}+\mathrm{Tail}(u;B_{2d(x)/3}(x))
    &\leq c\left[\sum_{i=1}^{l}2^{-2si}\|u\|_{L^\infty\left(B_{2^id(x)}(x)\right)}+2^{-2sl}\mathrm{Tail}(u;B_{1/8}(x))\right]\\
        &\leq cd(x)^s(\|u/d^s\|_{L^\infty(\Omega\cap B_{1/8}(x))}+\|u\|_{L^1_{2s}(\bbR^n)}).
    \end{align*}
    Plugging this into \eqref{ineq2.cor.hol} together with Theorem \ref{thm.dini} yields
    \begin{align}\label{ineq3.cor.hol}
        \frac{|u(x)-u(y)|}{|x-y|^s}\leq c(\|u\|_{L^1_{2s}(\bbR^n)}+\|f\|_{L^{n/s,1}(\Omega\cap B_1)})
    \end{align}
    for some constant $c=c(\mathsf{data})$.
   The desired estimate then follows from \eqref{ineq1.cor.hol} and \eqref{ineq3.cor.hol}.
\end{proof}

We next prove Theorem \ref{thm.hol.1} and Theorem \ref{thm.hol.2}.
\begin{proof}[Proof of Theorem \ref{thm.hol.1}]
Let us fix $\sigma\in(0,\alpha)$ and $q\in[2_*,n/(s-\sigma))$. Let us denote 
\begin{equation*}
    \widetilde{\sigma}\coloneqq\frac{\sigma+\alpha}{2}<\alpha.
\end{equation*}
We now claim that for any $z\in \overline{\Omega}\cap B_{1/2}$, there holds 
\begin{equation}\label{ineq1.thm.hol.1}
\begin{aligned}
    M^{\#,\Omega}_{\widetilde{\sigma},2^{-6}}(u/d^s)(z)&\leq c\left(\|u\|_{L^1_{2s}(\bbR^n)}+M^{\Omega}_{2^{-6}}(u/d^s)(z)\right)\\
&\quad+c\left(\mathbf{W}^{|f|^{2_*}\mbox{\Large$\chi$}_{\Omega}}_{\frac{2_* s}{2_*+1},2_*+1}(z,2^{-2})+\mathbf{W}^{|u|^{2_*}\mbox{\Large$\chi$}_{\Omega}}_{\frac{2_* s}{2_*+1},2_*+1}(z,2^{-2})\right)\\
&\quad+c\left(\left(M^{\Omega}_{2_*(s-\widetilde{\sigma}),2^{-2}}(|u|^{2_*})(z)\right)^{\frac1{2_*}}+\left(M^{\Omega}_{2_*(s-\widetilde{\sigma}),2^{-2}}(|f|^{2_*})(z)\right)^{\frac1{2_*}}\right)
\end{aligned}
\end{equation}
for some constant $c=c(\mathsf{data},q,\sigma,K)$. To this end, we fix $z\in \overline{\Omega}\cap B_{1/2}$. By \eqref{ass.thm.hol}, for any $\delta>0$, there is a constant $\rho_0=\rho_0(\alpha,K)$ such that $A$ is $(\delta,\rho_0)$-vanishing.
Thus by Theorem \ref{thm.vmo} along with \eqref{ineq.simple}, we have 
\begin{equation}\label{ineq2.thm.hol.1}
\begin{aligned}
    \|u\|_{L^{\frac{nq}{n-(s-\widetilde{\sigma})q}}(B_{7/8})}\leq c\max\{1,d^s(0)\}\|u/d^s\|_{L^{\frac{nq}{n-(s-\widetilde{\sigma})q}}(B_{7/8})}&\leq c\left[\|u\|_{L^1_{2s}(\bbR^n)}+\|f\|_{L^{\frac{nq}{n+\widetilde{\sigma}q}}(\Omega\cap B_{1})}\right]\\
    &\leq c\left[\|u\|_{L^1_{2s}(\bbR^n)}+\|f\|_{L^{q}(\Omega\cap B_{1})}\right]
\end{aligned}
\end{equation}
for some constant $c=c(\mathsf{data},K)$. We next employ Remark \ref{rmk.sec5} to observe that there is a weak solution $\widetilde{u}\in W^{s,2}(\bbR^n)$ to \eqref{eq: sec5.loc} and \eqref{defn.omega.sec5} with $\widetilde{u}\equiv 0$ in $\bbR^n\setminus B_{2}$. We note from \eqref{ass.thm.hol} that \eqref{ass.lem.hol} holds with $L=K$. Therefore, Lemma \ref{lem.hol} yields 
\begin{equation}\label{ineq3.thm.hol.1}
\begin{aligned}
    {N}^{\widetilde{\sigma}}_{2^{-6}}(\widetilde{u};z)&\leq c\left(\|\widetilde{u}\|_{L^1_{2s}(\bbR^n)}+\mathbf{W}^{|F|^{2_*}}_{\frac{2_* s}{2_*+1},2_*+1}(z,2^{-2})+\left({M}^{\Omega}_{2_*(s-\widetilde{\sigma}),2^{-2}}(|F|^{2_*})(z)\right)^{\frac1{2_*}}\right),
\end{aligned}
\end{equation}
where $c=c(\mathsf{data},K,\sigma)$. For any $x,y\in \overline{\Omega}\cap B_{2^{-6}}(z)$, we observe  
    \begin{align*}
    |(u/d^s)(x)-(u/d^s)(y)|&=|(\widetilde{u}/d^s)(x)-(\widetilde{u}/d^s)(y)|\\
    &\leq |(\widetilde{u}/\oldphi)(x)-(\widetilde{u}/\oldphi)(y)||(\oldphi/d^s)(x)|+|(\widetilde{u}/\oldphi)(y)||(\oldphi/d^s)(x)-(\oldphi/d^s)(y)|\\
    &\leq c|(\widetilde{u}/\oldphi)(x)-(\widetilde{u}/\oldphi)(y)|+c|(\widetilde{u}/\oldphi)(y)||x-y|^{\widetilde{\sigma}}\\
    &\leq c|(\widetilde{u}/\oldphi)(x)-(\widetilde{u}/\oldphi)(y)|+c|({u}/d^s)(y)||x-y|^{\widetilde{\sigma}}
\end{align*}
for some constant $c=c(\mathsf{data},\sigma)$, where we have used the third condition given in \eqref{eq: sec5.loc}, \eqref{rel.phid1} and \eqref{hol.phid1}. Combining this and \eqref{ineq3.thm.hol.1}, we get
\begin{equation}\label{ineq4.thm.hol.1}
\begin{aligned}
    M^{\#,\Omega}_{\widetilde{\sigma},2^{-6}}(u/d^s)(z)&\leq c\left({N}^{\widetilde{\sigma}}_{2^{-6}}(\widetilde{u};z)+M^{\Omega}_{2^{-6}}({u}/d^s)(z)\right)\\
    &\leq c\left(\|u\|_{L^1_{2s}(\bbR^n)}+\left(\dashint_{\Omega\cap B_{2^{-3}}(z)}|f|^{2_*}\,dx\right)^{\frac1{2_*}}+M^{\Omega}_{2^{-6}}({u}/d^s)(z)\right)\\
    &\quad+c\left(\mathbf{W}^{|F|^{2_*}}_{\frac{2_* s}{2_*+1},2_*+1}(z,2^{-2})+\left({M}^{\Omega}_{2_*(s-\widetilde{\sigma}),2^{-2}}(|F|^{2_*})(z)\right)^{\frac1{2_*}}\right)
\end{aligned}
\end{equation}
for some constant $c=c(\mathsf{data},K,\sigma)$.
As in the estimate of \eqref{ineq81.thm.dini} along with \eqref{sec5.F}, we have
\begin{align*}
&\mathbf{W}^{|F|^{2_*}\mbox{\Large$\chi$}_{\Omega}}_{\frac{2_* s}{2_*+1},2_*+1}(z,2^{-2})+\left({M}^{\Omega}_{2_*(s-\widetilde{\sigma}),2^{-2}}(|F|^{2_*})(z)\right)^{\frac1{2_*}}\\
&\leq c\left(\mathbf{W}^{|f|^{2_*}\mbox{\Large$\chi$}_{\Omega}}_{\frac{2_* s}{2_*+1},2_*+1}(z,2^{-2})+\mathbf{W}^{|u|^{2_*}\mbox{\Large$\chi$}_{\Omega}}_{\frac{2_* s}{2_*+1},2_*+1}(z,2^{-2})\right)\\
&\quad +c\left(\left({M}^{\Omega}_{2_*(s-\widetilde{\sigma}),2^{-2}}(|f|^{2_*})(z)\right)^{\frac1{2_*}}+\left({M}^{\Omega}_{2_*(s-\widetilde{\sigma}),2^{-2}}(|u|^{2_*})(z)\right)^{\frac1{2_*}}+\|u\|_{L^1_{2s}(\bbR^n)}\right)
\end{align*}
for some constant $c=c(\mathsf{data},\sigma)$. Combining this and \eqref{ineq4.thm.hol.1} proves the claim \eqref{ineq1.thm.hol.1}.
We now employ the standard strong p-p estimate and \eqref{ineq2.thm.hol.1} to see that
\begin{align*}
    \|M^{\Omega}_{2^{-6}}(u/d^s)(z)\|_{L^{\frac{nq}{n-(s-\widetilde{\sigma}q)}}(B_{1/2})}\leq c\|u/d^s\|_{L^{\frac{nq}{n-(s-\widetilde{\sigma}q)}}(B_{3/4})}\leq c(\|u\|_{L^1_{2s}(\bbR^n)}+\|f\|_{L^q(\Omega\cap B_1)}).
\end{align*}
Plugging this into \eqref{ineq1.thm.hol.1} and applying \eqref{ineq1.wolff} along with \eqref{ineq1.pre.fmax} to the terms on the second and third lines in \eqref{ineq1.thm.hol.1}, we have
\begin{align*}
    \|M^{\#,\Omega}_{\widetilde{\sigma},2^{-6}}(u/d^s)(z)\|_{L^{\frac{nq}{n-(s-\widetilde{\sigma})q}}(B_{1/2})}\leq c(\|u\|_{L^1_{2s}(\bbR^n)}+\|f\|_{L^q(\Omega\cap B_1)})
\end{align*}
for some constant $c=c(\mathsf{data},K,\sigma,q)$. We now apply Lemma \ref{lem.max} with $p=\frac{nq}{n-(s-\widetilde{\sigma})q}$, $\beta=\widetilde{\sigma}$ and $t=\sigma$ to see that
\begin{align*}
    \|u/d^s\|_{W^{\sigma,\frac{nq}{n-(s-\sigma)q}}(B_{2^{-7}})}&\leq c(\|u\|_{L^{\frac{nq}{n-(s-\widetilde{\sigma})q}}(B_{2^{-6}})}+\|u\|_{L^1_{2s}(\bbR^n)}+\|f\|_{L^q(\Omega\cap B_1)})\\
    &\leq c(\|u\|_{L^1_{2s}(\bbR^n)}+\|f\|_{L^q(\Omega\cap B_1)})
\end{align*}
for some constant $c=c(\mathsf{data},K,\sigma,q)$, where we have used \eqref{ineq2.thm.hol.1} for the last inequality. By the standard covering argument, we obtain the desired estimate.
\end{proof}

\begin{proof}[Proof of Theorem \ref{thm.hol.2}]
Let us fix $\sigma\in(0,\alpha)$. As in the proof of \eqref{ineq1.thm.hol.1}, we have
\begin{equation}\label{ineq1.thm.hol.2}
\begin{aligned}
    M^{\#,\Omega}_{{\sigma},2^{-6}}(u/d^s)(z)&\leq c\left(\|u\|_{L^1_{2s}(\bbR^n)}+\left(\dashint_{\Omega\cap B_1}|f|^{2_*}\,dx\right)^{\frac1{2_*}}+cM^{\Omega}_{2^{-6}}(u/d^s)(z)\right)\\
&\quad+c\left(\mathbf{W}^{|f|^{2_*}\mbox{\Large$\chi$}_{\Omega}}_{\frac{2_* s}{2_*+1},2_*+1}(z,2^{-3})+\mathbf{W}^{|u|^{2_*}\mbox{\Large$\chi$}_{\Omega}}_{\frac{2_* s}{2_*+1},2_*+1}(z,2^{-3})\right)\\
&\quad+c\left(\left(M^{\Omega}_{2_*(s-{\sigma}),2^{-3}}(|u|^{2_*})(z)\right)^{\frac1{2_*}}+\left(M^{\Omega}_{2_*(s-{\sigma}),2^{-3}}(|f|^{2_*})(z)\right)^{\frac1{2_*}}\right)
\end{aligned}
\end{equation}
for some constant $c=c(\mathsf{data},K,\sigma)$.
Since $A$ is H\"older continuous and $f\in L^{n/(s-\sigma),\infty}(\Omega\cap B_1)\subset L^{n/s,1}(\Omega\cap B_1)$, we have 
\begin{align}\label{ineq2.thm.hol.2}
    \|u/d^s\|_{L^\infty(\Omega\cap B_{3/4})}\leq c(\|u\|_{L^1_{2s}(\bbR^n)}+\|f\|_{ L^{n/(s-\sigma),\infty}(\Omega\cap B_1)})
\end{align}
for some constant $c=c(\mathsf{data},K,\sigma)$, which implies 
\begin{equation*}
    \|M^{\Omega}_{2^{-6}}(u/d^s)(z)\|_{L^\infty(B_{1/2})}\leq\|u/d^s\|_{L^\infty(\Omega\cap B_{3/4})}\leq c(\|u\|_{L^1_{2s}(\bbR^n)}+\|f\|_{ L^{n/(s-\sigma),\infty}(\Omega\cap B_1)}).
\end{equation*}
Using this and applying \eqref{ineq2.wolff} and \eqref{ineq2.pre.fmax} into second and third lines in \eqref{ineq1.thm.hol.2}, we obtain 
\begin{align*}
     M^{\#,\Omega}_{{\sigma},2^{-6}}(u/d^s)(z)&\leq c\left(\|u\|_{L^1_{2s}(\bbR^n)}+\|u\|_{L^{n/(s-\sigma)}(\Omega\cap B_{3/4})}+\|f\|_{L^{n/(s-\sigma)}(\Omega\cap B_{3/4})}\right)\\
     &\leq c\left(\|u\|_{L^1_{2s}(\bbR^n)}+\|f\|_{L^{n/(s-\sigma)}(\Omega\cap B_{1})}\right)
\end{align*}
for some constant $c=c(\mathsf{data},\sigma,K)$, whenever $z\in B_{1/2}$. Therefore, using this and \cite[Theorem 2.9]{Giu03} together with the fact that $\Omega$ is a $C^{1,\alpha}$-domain, we have
\begin{align*}
    \|u/d^s\|_{C^\sigma(\overline{\Omega}\cap B_{1/2})}\leq c\left(\|u\|_{L^1_{2s}(\bbR^n)}+\|f\|_{L^{n/(s-\sigma)}(\Omega\cap B_{1})}\right),
\end{align*}
which completes the proof.
\end{proof}

We end this section by providing some details on the validity of our results to more general kernel coefficients which are comparable to the recent work \cite{KimWei24} in the case of the H\"older continuity assumption.
\subsection{General kernel coefficients}\label{sec5.3}
In this subsection, we apply our arguments to obtain the same estimates as in Theorem \ref{thm.vmo}, Theorem \ref{thm.dini}, Corollary \ref{cor.cs}, Theorem \ref{thm.hol.1} and Theorem \ref{thm.hol.2} with more general kernel coefficients. 

 Let us fix $\rho_0>0$. We then assume that the associated kernel coefficient $A$ satisfies
\begin{equation}\label{defn.delta2}
    \sup_{0<R\leq \rho_0}\sup_{z\in B_1}\dashint_{B_R(z)}\dashint_{B_R(z)}|A-\overline{A}_{B_R(z)}(x,y)|\,dx\,dy\leq \delta,
\end{equation}
where 
\begin{equation*}
    \overline{A}_{B_R(z)}(x,y)\coloneqq \frac{1}{2}\dashint_{B_R(z)}(A(x-y+h,h)+A(y-x+h,h))\,dh \quad\text{for }x,y\in\bbR^n.
\end{equation*}
Since the function $\overline{A}_{B_R(z)}(x,y)$ is translation invariant, we first note from \cite[Theorem 1.4 \& Theorem 1.6]{RosWei24} that any weak solution $v$ to 
\begin{equation*}
\left\{
\begin{alignedat}{3}
\mathcal{L}_{\overline{A}_{B_{2r}(z)}}{v}&= f&&\qquad \mbox{in  $\Omega \cap B_{r}(z)$}, \\
{v}&=0&&\qquad  \mbox{in $B_{r}(z)\setminus \Omega$},
\end{alignedat} \right.
\end{equation*}
satisfies
\begin{equation*}
    r^s\|v/d^s\|_{L^\infty(B_{r/2}(z))}\leq c\left(\|v\|_{L^1_{2s}(\bbR^n)}+\|f\|_{L^\infty(\Omega\cap B_{r}(z))}\right)
\end{equation*}
for some constant $c=c(\mathsf{data})$.
Therefore, we obtain the estimate given in Lemma \ref{lem.loc.const} when 
\begin{align*}
    a_0(x,y)\coloneqq \begin{cases}
        \overline{A}_{B_{2r}(z)}(x,y)&\text{if }(x,y)\in B_{2r}(z)\times B_{2r}(z),\\
        A(x,y)&\text{if }(x,y)\in \bbR^{2n}\setminus(B_{2r}(z)\times B_{2r}(z)).
    \end{cases} 
\end{align*}
By following the same lines as in the proof of Lemmas \ref{lem.disc.comp}-\ref{lem.vmo} with $(A)_{B_{2r}(z)\times B_{2r}(z)}$ replaced by $\overline{A}_{B_{2r}(z)}$, we prove the same result in Theorem \ref{thm.vmo} when the vanishing condition given in Definition \ref{defn.del.van} is replaced by \eqref{defn.delta2}.

We now consider the case of continuous kernel coefficients.
Let us assume that the kernel coefficient $A$ satisfies
\begin{equation}\label{ker.tran}
    \sup_{0<|h|<1}\sup_{x,y\in B_1}|A(x+h,y+h)-A(x,y)|\leq \omega(|h|),
\end{equation}
where $\omega:\bbR_+\to\bbR_+$ is a non-decreasing function with $\omega(0)=0$. We first note that if $A$ satisfies \eqref{ker.tran}, then there is a sufficiently small $\rho_0=\rho_0(\omega)$ such that $A$ satisfies the vanishing condition \eqref{defn.delta2}.
By \cite[Theorem 1.1, Theorem 1.4 \& Theorem 1.6]{RosWei24}, we next observe that  for any $z\in \Omega\cap B_{1/2}$, there is the weak solution $\oldphi\equiv \oldphi_z$ to \eqref{eq.barrier} with 
\begin{equation}\label{defn.az}
    A_z(x,y)\coloneqq(A(x-y+z,z)+A(y-x+z,z))/2\quad \text{for } x,y\in\bbR^n,
\end{equation}
satisfying \eqref{rel.phid1} and \eqref{bdd.phid1}. Since we can apply Lemma \ref{lem.localization} with $a=A_z$ there, to obtain Remark \ref{rmk.sec5}, we now consider a weak solution $u\in W^{s,2}(\bbR^n)$ to \eqref{eq.sec5.loc} with 
\begin{equation*}
    |A(x,y)-A_z(x,y)|\leq ({\pmb{\omega}}(|y-z|)+{\pmb{\omega}}(|x-z|))/2,
\end{equation*}
where $\pmb{\omega}$ satisfies the same condition as in \eqref{defn.omega.sec5}.
By following the same lines as in the proof of \cite[Lemma 6.6]{KimWei24} together with \cite[Theorem 1.1,Theorem 1.4 \& Theorem 1.6]{RosWei24}, the weak solution $v$ to \eqref{eq.comp.v} with $a_0=A_z$ satisfies \eqref{hig.fir.ineq} with $\alpha$ replaced by $\alpha s/2$. In addition, we have 
\begin{align*}
       r^{4s}\left(\sup_{x\in B_r(z)}\int_{\bbR^n\setminus B_{2r}(z)}\frac{|A-A_z|}{|y-z|^{n+2s}}\,dy\right)^2
       &\leq r^{4s}\left(\sup_{x\in B_r(z)}\int_{\bbR^n\setminus B_{2r}(z)}\frac{\pmb{\omega}(|y-z|)+\pmb{\omega}(|x-z|)}{2|y-z|^{n+2s}}\,dy\right)^2\\
       &\leq r^{4s}\left(\sup_{x\in B_r(z)}\int_{B_{1/8}(z)\setminus B_{2r}(z)}\frac{{\omega}(|y-z|)}{|y-z|^{n+2s}}\,dy\right)^2\\
       &\leq r^{4s}\left(\int_{2r}^{1/8}\frac{{\omega}(\rho)}{\rho^{1+2s}}\,dy\right)^2\leq {\pmb{\omega}}_G(2r)
   \end{align*}
   and
\begin{align*}
    \sup_{x\in B_r(z)}\int_{\bbR^n\setminus B_{2r}(z)}\frac{|w(y)||A-A_z|}{|y-z|^{n+2s}}\,dy&\leq \int_{B_{1/8}(z)\setminus B_{2r}(z)}\frac{|w(y)|(\pmb{\omega}(|y-z|)+\pmb{\omega}(|x-z|))}{2|y-z|^{n+2s}}\,dy\\
    &\leq \int_{B_{1/8}(z)\setminus B_{2r}(z)}\frac{|w(y)|{\omega}(|y-z|)}{|y-z|^{n+2s}}\,dy\\
    &\leq \int_{\bbR^n\setminus B_{2r}(z)}\frac{|w(y)|{\pmb{\omega}}(|y-z|)}{|y-z|^{n+2s}}\,dy.
\end{align*}
Thus, we are able to prove \eqref{comp2.ineq} and Lemma \ref{lem.decay} with $\alpha$ replaced by $\alpha s/2$. Using this along with \eqref{rel.phid1} and \eqref{bdd.phid1}, we can prove Lemma \ref{lem.dini} and Lemma \ref{lem.dini.max} when
\begin{equation}\label{ass1} 
    \int_{0}^1\frac{\omega(\rho)}{\rho}\,d\rho<\infty.
\end{equation}
Finally, all results given in Theorem \ref{thm.dini} and Corollary \ref{cor.cs} hold if $A$ satisfies \eqref{ker.tran} and \eqref{ass1}. 

However, to get a higher differentiability of $u/d^s$, we need to impose more regularity assumption on the frozen kernel $A_z$.
To do this, we now assume that $A$ satisfies \eqref{ker.tran} with \begin{equation}\label{ass2} 
   {\omega(\rho)}\leq K\rho^\alpha,
\end{equation} and the frozen kernel coefficient $A_z$ given in \eqref{defn.az} is homogeneous for any $z\in B_1$ (see \cite[Definition 2.1.21]{FerRos24} for a precise definition of the homogeneity of the kernel coefficient). We first observe from \cite{FerRos24} that \eqref{rel.phid1}, \eqref{bdd.phid1} and \eqref{hol.phid1} hold with $A(z,z)$ replaced by $A_z$.
By \cite[Lemma 8.1]{KimWei24}, we obtain the same estimates as in Lemma \ref{lem.decay}. Since under the assumption \eqref{ass2}, \eqref{ass1} immediately follows. Therefore, we are able to prove Lemma \ref{lem.hol}. Using this together with \eqref{rel.phid1}, \eqref{bdd.phid1} and \eqref{hol.phid1}, we finally prove Theorem \ref{thm.hol.1} and Theorem \ref{thm.hol.2}.

\appendix
\section{Some sharpness results}\label{appen}
In this appendix, we construct some examples to establish the sharpness of the estimates of Theorem \ref{thm.vmo}, Theorem \ref{thm.hol.1} and Theorem \ref{thm.hol.2}.

Let us take a smooth domain $\Omega$ such that 
\begin{equation*}
    \Omega\cap B_1=B_1^+.
\end{equation*}
We take 
\begin{equation*}
    f(x)\coloneqq |x|^{-n/q}
\end{equation*}
for any $q>1$. We have 
\begin{equation*}
    u(x)=\int_{\Omega}G(x,y)f(y)\,dy
\end{equation*}
is a weak solution to \eqref{eq: thm} with $A=1$, where $G$ is the green function with respect to the domain $\Omega$. By \cite{CheKimSon10}, we have for any $x\in B_{1}^+$,
\begin{align*}
    u(x)&\geq \int_{B_1^+}\min\left\{\frac{x_n}{|x-y|},1\right\}^{s}\min\left\{\frac{y_n}{|x-y|},1\right\}^{s}\frac{|y|^{-n/q}}{|x-y|^{n-2s}}\,dy\\
    &\geq \int_{B_{x_n}(x)}\min\left\{\frac{x_n}{|x-y|},1\right\}^{s}\min\left\{\frac{y_n}{|x-y|},1\right\}^{s}\frac{|y|^{-n/q}}{|x-y|^{n-2s}}\,dy.
\end{align*}
Since for any $y\in B^{+}_{x_n}(x)\coloneqq\{(z',z_n)\in B_{x_n}(x)\,:\,z_n\geq x_n\}$
\begin{equation*}
    \min\left\{\frac{x_n}{|x-y|},1\right\}\leq 1 \quad\text{and}\quad \min\left\{\frac{y_n}{|x-y|},1\right\}\leq 2,
\end{equation*}
we have
\begin{align*}
    u(x)\geq \int_{B^{+}_{x_n}(x)}\frac{|y|^{-n/q}}{|x-y|^{n-2s}}\,dy\geq \int_{B^{+}_{x_n}(x)\cap |y|\geq\frac{|x|}2}\frac{|y|^{-n/q}}{|x-y|^{n-2s}}\,dy.
\end{align*}
We observe from
\begin{equation*}
    B^{+}_{x_n}(x)\cap |y|\geq\frac{|x|}2=\{x+x_n\theta\,:\, \theta\in S_{n}^{+}\quad\text{and}\quad |x+x_n\theta|\geq|x|/2 \}
\end{equation*}
that there is a set $A\subset S_{n}^{+}$ such that $B^{+}_{x_n}(x)\cap |y|=\{x+x_n\theta\,:\, \theta\in A \}$ and
\begin{equation*}
    \int_{A}\,dS\geq c|S_n^{+}|
\end{equation*}
for some constant $c$, where $dS$ is the surface measure on $S_n^{+}$. Therefore, we have
\begin{align*}
    u(x)\geq \int_{B^{+}_{x_n}(x)}\frac{|y|^{-n/q}}{|x-y|^{n-2s}}\,dy&\geq \int_{B^{+}_{x_n}(x)\cap |y|\geq\frac{|x|}2}\frac{|x|^{-n/q}}{|x-y|^{n-2s}}\,dy\\
    &= c|x|^{-n/q}\int_{0}^{x_n}\int_{A}\frac{1}{r^{1-2s}}\,dS\,dr\geq c|x|^{-n/q}x_n^{2s}
\end{align*}
for some constant $ c=c(n,s)$. Therefore, we have 
\begin{equation}\label{ineq1.appen}
    |u/d^s(x)|\geq c|x|^{-n/q}x_n^s\quad\text{in }B_1^+
\end{equation}
and
\begin{align*}
    \int_{B_1^+}|u/d^s(x)|^{p}\,dx\geq c\int_{B_1^+}\left||x|^{-n/q}x_n^s\right|^{p}\,dx&\geq c\int_{0}^{1}\int_{S^+_{n}}\left|r^{-n/q}r^s \theta_n^{s}\right|^{p}r^{n-1}\,dS\,dr\\
    &\geq c\int_{0}^{1} r^{(s-n/q)p-(n-1)}\,dr.
\end{align*}
This implies $u/d^s\notin L^{p}$ if $p\geq \frac{nq}{n-sq}$ but $u/d^s\in L^{p}$ if $p<\frac{nq}{n-sq}$. We now fix $q\in[2_*,n/s)$. Then we have that for any $\epsilon\in(0,n/s-q)$ there is a weak solution $u$ to \eqref{eq: defn} with $f(x)=|x|^{-\frac{n\left(1+\frac{\epsilon s(n-sq)}{2n^2}\right)}{q+\frac{\epsilon(n-sq)}{2n}}}\in L^q(\Omega\cap B_1)$, but
\begin{equation*}
    u/d^s\notin L^{\frac{nq}{n-sq}+\epsilon}(\Omega\cap B_1).
\end{equation*}

\printbibliography

\end{document}